\documentclass[11pt]{amsart}
\usepackage{amscd,amssymb}
\usepackage{amsthm,amsmath,amssymb}
\usepackage[matrix,arrow]{xy}
\usepackage{enumerate}
\pagestyle{plain}

\textwidth=16cm \textheight=22cm

\addtolength{\topmargin}{-1cm} \addtolength{\oddsidemargin}{-2cm}
\addtolength{\evensidemargin}{-2cm}
\newtheorem{theorem}{Theorem}[section]
\newtheorem{proposition}[theorem]{Proposition}
\newtheorem{lemma}[theorem]{Lemma}
\newtheorem{corollary}[theorem]{Corollary}

\newtheorem*{conjecture*}{Conjecture}

\theoremstyle{definition}
\newtheorem{example}[theorem]{Example}
\newtheorem{definition}[theorem]{Definition}

\theoremstyle{remark}
\newtheorem{remark}[theorem]{Remark}

\makeatletter\@addtoreset{equation}{section} \makeatother

\title{Stable polarized del Pezzo surfaces}

\author{Ivan Cheltsov and Jesus Martinez-Garcia}
\address{i.cheltsov@ed.ac.uk\newline
	MSU, Faculty of Mechanics and Mathematics, Russia, 119991, Moscow, GSP-1, 1 Leninskiye Gory, Main Building.\newline
	University of Edinburgh, Department of Mathematics, Mayfield Rd., Edinburgh EH9 3JZ, UK.
	\newline
	J.Martinez.Garcia@bath.ac.uk\newline
	Department of Mathematical Sciences, University of Bath, Claverton Down, Bath BA2 7AY, UK.}

\pagestyle{headings}

\begin{document}

\begin{abstract}
We give a simple sufficient condition for $K$-stability of polarized del Pezzo surfaces and for the existence of a constant scalar curvature K\"ahler metric in the K\"ahler class corresponding to the polarization.
\end{abstract}

\sloppy

\maketitle

\section{Introduction}
\label{section:into}

In recent years the notion of $K$-stability has been of great importance
in the study of the existence of canonical metrics on complex varieties.
This is mainly because of the following

\begin{conjecture*}[Yau--Tian--Donaldson]
\label{conjecture:YTD}
Let $X$ be a smooth variety, and let $L$ be an ample line bundle on $X$.
Then $X$ admits a constant scalar curvature K\"ahler (cscK) metric in $c_1(L)$
if and only if the pair $(X,L)$ is $K$-polystable.
\end{conjecture*}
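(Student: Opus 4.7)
Since this is the Yau--Tian--Donaldson conjecture in full generality, a complete proof is not available at present, and I will therefore sketch the two implications separately and indicate where each strategy reaches the current state of the art.

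For the direction cscK $\Rightarrow$ $K$-stability I would follow the Donaldson--Mabuchi--Stoppa pattern. First I would assign to every normal test configuration for $(X,L)$ a geodesic ray in the space of smooth K\"ahler potentials in $c_{1}(L)$, using the Bergman kernel construction to pass from algebraic degenerations to the analytic side. Second, via Mabuchi's asymptotic formula, I would identify the slope at infinity of the $K$-energy along this ray with the Donaldson--Futaki invariant of the test configuration, up to a controlled error that vanishes in the Bergman limit. Since a cscK metric is a minimizer of the $K$-energy, the slope must be nonnegative, and one obtains $K$-semistability; strict positivity for non-product configurations, i.e.\ $K$-polystability in the presence of automorphisms, comes from Stoppa's blow-up perturbation trick.

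For the harder direction I would work variationally on the space $\mathcal{E}^{1}(X,L)$ of finite-energy K\"ahler potentials. The plan is to translate the algebraic hypothesis of $K$-stability into coercivity of the Mabuchi $K$-energy via the non-Archimedean formalism of Berman--Boucksom--Jonsson, and then to invoke Chen--Cheng type a priori estimates to promote a minimizer to a smooth cscK representative of $c_{1}(L)$. Openness along a continuity path would be handled by the implicit function theorem in suitable weighted H\"older spaces; closedness is where the stability input is genuinely used.

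The decisive obstacle is precisely this closedness step: at present no argument is known that associates to an arbitrary destabilizing geodesic ray of finite entropy a \emph{genuine} algebraic test configuration with matching Donaldson--Futaki invariant. The cleanest statement one can currently hope to prove therefore replaces $K$-stability by \emph{uniform} $K$-stability, and the exact equivalence with $K$-stability as stated in the conjecture remains a formally stronger open problem. My honest proposal is thus to establish equivalence under uniform $K$-stability and to leave the match with the conjecture as stated as a subproblem, for which only partial results are known (Chen--Donaldson--Sun in the K\"ahler--Einstein Fano case, and various extensions by Chen--Cheng and others under stronger stability hypotheses).
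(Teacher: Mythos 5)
This statement is the Yau--Tian--Donaldson conjecture, which the paper records precisely as an open conjecture and does not prove; it only cites the known partial results (Donaldson and Codogni--Stoppa in the toric surface case, Berman--Darvas--Lu for the necessity of $K$-stability, Chen--Donaldson--Sun for anticanonically polarized Fanos). Your proposal is therefore not comparable to any proof in the paper, but it is the right response: you correctly identify that no complete proof exists, and your sketch of the two directions --- test configurations versus geodesic rays and the Donaldson--Futaki slope for necessity, coercivity of the $K$-energy via the non-Archimedean formalism plus Chen--Cheng estimates for sufficiency, with the ray-to-test-configuration approximation as the genuine obstruction --- accurately reflects the state of the art and is consistent with everything the paper asserts about the conjecture. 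One small correction: Stoppa's perturbation argument upgrades $K$-semistability to $K$-stability only when the automorphism group is discrete; handling $K$-polystability in the presence of continuous automorphisms requires the later work of Berman--Darvas--Lu that the paper cites, not Stoppa's trick alone.
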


It is known in different degrees of generality that $K$-stability is a necessary condition for the existence of a cscK metric,
with the most general result due to Berman, Darvas and Lu \cite{BDL16} following work of Darvas and Rubinstein \cite{DR17}.
To show that it is also a sufficient condition is currently one of the main open questions in the field.
For smooth Fano varieties polarized by anticanonical line bundles,
this was recently proved by Chen, Donaldson and Sun in \cite{CDS}.

The goal of this paper is to study $K$-stability of polarized smooth del Pezzo surfaces.
This problem is explicitly solved in the toric case by Donaldson \cite{SD-toric-surfaces}.
Surprisingly, we do not know many results about the non-toric case.
By the famous theorem of Tian \cite{Tian1990}, all non-toric smooth del Pezzo surfaces are K\"ahler-Einstein,
so that they are $K$-stable for the anticanonical polarization.
Results of Arezzo and Pacard \cite{AP1,AP2}, Arezzo, Pacard and Singer \cite{APS}, and Rollin and Singer \cite{RS} imply the $K$-stability for many other polarizations.
On the other hand, one can use Ross and Thomas's \cite[Example~5.30]{RT} to produce many $K$-unstable polarizations
on every smooth non-toric del Pezzo surface.

The main result of this paper is

\begin{theorem}
\label{theorem:main}
Let $S$ be a smooth del Pezzo surface such that $K_S^2\leqslant 2$, and let $L$ be an ample $\mathbb{Q}$-divisor on $S$.
If $-K_S-\frac{2}{3}\frac{-K_S\cdot L}{L^2}L$ is nef, then $(S,L)$ is $K$-stable.
\end{theorem}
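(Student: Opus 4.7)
The natural approach is to combine a sufficient criterion for $K$-stability of polarized pairs based on the global log canonical threshold (i.e., the $\alpha$-invariant) with specific features of del Pezzo surfaces of small degree. The factor $\tfrac{2}{3} = \tfrac{n}{n+1}$ (for $n = 2$) appearing in the nef hypothesis is characteristic of such criteria: it is the Tian constant governing the properness of Mabuchi-type functionals, and its presence strongly suggests that the proof fits in the Tian--Odaka--Sano--Dervan family of $\alpha$-invariant arguments.

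The first step is to formulate and apply a criterion of the schematic form: if $-K_X - \tfrac{n}{n+1}\bar{\mu}\,L$ is nef, where $\bar{\mu} := (-K_X \cdot L^{n-1})/L^n$, and if $\alpha(X,L)$ is large enough relative to $\bar{\mu}$, then $(X,L)$ is $K$-stable. Combined with the Berman--Darvas--Lu result cited in the introduction (which shows that properness of the Mabuchi functional yields $K$-stability), this reduces the theorem to a uniform lower bound $\alpha(S,L) \geq c\,\bar{\mu}$ for an explicit constant $c$ depending only on $n = 2$.

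The second step is to establish this $\alpha$-invariant bound, which is where the hypothesis $K_S^2 \leq 2$ is essential. On del Pezzo surfaces of degree $1$ and $2$, global log canonical thresholds of anticanonical $\mathbb{Q}$-divisors are well understood (work of Cheltsov, Cheltsov--Shramov, Park--Won), and are typically bounded well below $1$ from above and comfortably above $\tfrac{2}{3}$ from below. To lift these estimates from $-K_S$ to an arbitrary ample $L$, one writes $-K_S \sim_{\mathbb{Q}} \tfrac{2}{3}\bar{\mu}\,L + N$ with $N$ nef, and uses $N$ as a slack term: given an effective $\mathbb{Q}$-divisor $D \sim_{\mathbb{Q}} L$, one compares the pair $(S,\lambda D)$ to an anticanonical pair, exploiting also that the nef hypothesis forces $E \cdot L \leq \tfrac{3}{2\bar{\mu}}\cdot(-K_S)\cdot E$ for every irreducible curve $E \subset S$, which constrains the behaviour of $D$ along the finitely many negative curves of $S$.

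The main obstacle is the non-linearity of log canonical thresholds in the divisor class: the nef decomposition above is only a starting point and does not automatically transfer singularities from anticanonical divisors to divisors in $|L|_{\mathbb{Q}}$. The heart of the proof will be a careful case analysis on the possible log canonical centres of $(S,\lambda D)$, combined with intersection-theoretic bounds on multiplicities of $D$ along the $(-1)$-curves and other distinguished curves of $S$. The assumption $K_S^2 \leq 2$ provides exactly the rigidity needed to make this case analysis tractable, and reflects the fact that the analogous statement in higher degree can genuinely fail, in accordance with the Ross--Thomas examples quoted in the introduction.
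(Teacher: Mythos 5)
Your strategy coincides with the paper's: the theorem is proved by feeding the nefness hypothesis into Dervan's criterion (Theorem~\ref{theorem:Dervan}), so that everything reduces to showing that condition $(\mathrm{A})$ forces $\alpha(S,L)>\frac{2}{3}\nu(L)$ when $K_S^2\leqslant 2$; the paper's decomposition $R=-K_S-L$ (after rescaling $L$ so that $\nu(L)=\frac{3}{2}$) with slack parameter $\epsilon=-K_S\cdot R$ is exactly the nef remainder $N$ you describe. Two small corrections to your framing: Dervan's theorem gives $K$-stability directly as an algebraic statement, so the detour through properness of the Mabuchi functional and Berman--Darvas--Lu (which is cited for the \emph{converse} direction, cscK implies $K$-stability) is not needed; and the known anticanonical bounds $\alpha(S,-K_S)\geqslant\frac{5}{6}$ (degree one) and $\alpha(S,-K_S)\geqslant\frac{3}{4}$ (degree two) are used in the paper only to rule out the single degenerate case $L\sim_{\mathbb{Q}}-\frac{2}{3}K_S$; they do not ``lift'' to general polarizations.

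The genuine gap is that the step you defer --- ``a careful case analysis on the possible log canonical centres'' --- is the entire content of the theorem, and nothing in your outline indicates how it would be carried out or why it closes. Concretely, the paper must: choose a threshold $\gamma>1$ depending delicately on $\epsilon$ (equations \eqref{equation:gamma-1} and \eqref{equation:gamma-2}); pick the anticanonical curve $C$ through the non-log-canonical point $P$ (the irreducible member of the elliptic pencil in degree one, or the pullback of a line under the double cover $\pi\colon S\to\mathbb{P}^2$ in degree two); bound the coefficient of each component of $C$ in $D$ by $\frac{2}{3}$ using the orthogonality relation $(L-\frac{2}{3}(-K_S))\cdot L=0$; and then run one or two blow-ups at $P$, splitting into cases according to whether $C$ is smooth, nodal, cuspidal, or reducible at $P$, and according to the position of the new non-log-canonical point on the exceptional curves. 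The degree-two case additionally requires the refined adjunction inequality of Theorem~\ref{theorem:Trento} to handle the point where two $(-1)$-curves meet the exceptional divisor, and several of the final contradictions depend on cancellations involving $\epsilon_1=L_1\cdot R$ and $\epsilon_2=L_2\cdot R$ that only work because $\gamma$ was tuned to $\epsilon$ at the outset. None of this is routine, and Example~\ref{example:cubic-surface} shows the implication $(\mathrm{A})\Rightarrow(\mathrm{B})$ genuinely fails already in degree three, so the case analysis cannot be replaced by a soft argument; as it stands your proposal is a correct reduction plus an unproved claim.
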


For each smooth del Pezzo surface of degree one or two, this result provides a closed subset in its ample cone that consists of $K$-stable polarizations.
This subset contains an anticanonical divisor, so that Theorem~\ref{theorem:main} also implies
the existence of K\"ahler-Einstein metrics on smooth del Pezzo surfaces of degree one and two.
The nefness condition in Theorem~\ref{theorem:main} is easy to check,
because the Mori cones of these del Pezzo surfaces are generated by finitely many $(-1)$-curves. The polarizations satisfying the nefness condition are far from those considered by Arezzo, Pacard, Rollin and Singer \cite{AP1,AP2,APS,RS}, and therefore they are new. 

Combining Theorem \ref{theorem:main} with \cite{Chen-Cheng} by Chen and Cheng, we obtain the following
\begin{corollary}
Let $S$ be a smooth del Pezzo surface such that $K_S^2\leqslant 2$, and let $L$ be an ample $\mathbb{Q}$-divisor on $S$.
If $-K_S-\frac{2}{3}\frac{-K_S\cdot L}{L^2}L$ is nef, then $S$ admits a constant scalar curvature K\"ahler metric in the K\"ahler class $c_1(L)$.
\end{corollary}

For smooth del Pezzo surfaces of degree one, the assertion of Theorem~\ref{theorem:main} was recently proved by Hong and Won in \cite{HongWon} using a different approach.

The $\alpha$-invariant of Tian, originally defined in terms of complex differential geometry,
has an algebraic formulation which we use to study the $K$-stability of the anticanonically polarized del Pezzo surfaces.
For a variety $X$ and an ample $\mathbb{Q}$-divisor $L$ on it, we let
$$
\alpha\big(X,L\big)=\mathrm{sup}\left\{\lambda\in\mathbb{Q}\ \left|%
\aligned
&\text{the log pair}\ \left(X, \lambda D\right)\ \text{is log canonical}\\
&\text{for every effective $\mathbb{Q}$-divisor}\ D\sim_{\mathbb{Q}} L
\endaligned\right.\right\}\in\mathbb{R}_{>0}.%
$$
While the $\alpha$-invariant is very difficult to estimate for an arbitrary polarization,
it is the only effective invariant to provide sufficient condition for $K$-stability.
In this article we will prove Theorem~\ref{theorem:main} using the following criterion of Dervan:

\begin{theorem}[{\cite[Theorem~1.1]{Dervan1}}]
\label{theorem:Dervan}
Let $X$ be a Fano variety with log canonical singularities, and let $L$ be an ample $\mathbb Q$-divisor on it.
Let $\nu(L)=\frac{-K_X\cdot L^{n-1}}{L^n}$.
Then $(X,L)$ is $K$-stable provided that
\begin{itemize}
\item[$(\mathrm{A})$] the $\mathbb Q$-divisor $-K_X-\frac{n}{n+1}\nu(L)L$ is nef,
\item[$(\mathrm{B})$] and $\alpha(X,L)>\frac{n}{n+1}\nu(L)$.
\end{itemize}
\end{theorem}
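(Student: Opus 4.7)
The plan is to verify $K$-stability directly from the definition, by estimating the Donaldson--Futaki invariant of an arbitrary non-trivial normal ample test configuration $(\mathcal{X},\mathcal{L})$ for $(X,L)$, after compactifying it to a flat family $\bar{\mathcal{X}}\to\mathbb{P}^1$. First I would invoke the intersection-theoretic formula of Wang and Odaka, which (up to a positive normalising factor) expresses
$$\mathrm{DF}(\mathcal{X},\mathcal{L}) = \frac{n}{n+1}\nu(L)\cdot\frac{\mathcal{L}^{n+1}}{L^n} + \mathcal{L}^n\cdot K_{\bar{\mathcal{X}}/\mathbb{P}^1}.$$
After passing to a log resolution, I may assume that $\bar{\mathcal{X}}$ dominates the trivial test configuration $X\times\mathbb{P}^1$ via a birational morphism $\rho\colon\bar{\mathcal{X}}\to X\times\mathbb{P}^1$, and I write $\mathcal{L}=\rho^*L+D$ and $K_{\bar{\mathcal{X}}/\mathbb{P}^1}=\rho^*K_X+E$, with $D$ and $E$ supported on the central fibre.

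Substituting this decomposition and expanding, $\mathrm{DF}(\mathcal{X},\mathcal{L})$ splits into two qualitatively different pieces. The first piece collects the intersection numbers that pair $\rho^*\bigl(-K_X-\frac{n}{n+1}\nu(L)L\bigr)$ with powers of $\mathcal{L}$ and $D$; a projection formula computation shows it is non-negative, because the class $-K_X-\frac{n}{n+1}\nu(L)L$ is nef by hypothesis (A) and $\mathcal{L}$ is relatively ample over $\mathbb{P}^1$. The second piece collects all contributions involving $E$, and, after standard manipulations going back to Odaka, measures (weighted by $\mathcal{L}$) the log canonical deficiency of the pair $\bigl(\bar{\mathcal{X}},\tfrac{n}{n+1}\nu(L)\bar{\mathcal{X}}_0\bigr)$.

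The heart of the proof is to bound this second piece from below in terms of $\alpha(X,L)$. I would degenerate a generic effective $\mathbb{Q}$-divisor $H\sim_{\mathbb{Q}}L$ on $X$ across the family $\bar{\mathcal{X}}\to\mathbb{P}^1$, apply the defining property of the $\alpha$-invariant (so that $(X,\lambda H)$ is log canonical for every $\lambda<\alpha(X,L)$), and use an inversion-of-adjunction argument on $(\bar{\mathcal{X}},\bar{\mathcal{X}}_0)$ to transfer this bound to the total space. A non-Archimedean/test-configuration calculus in the spirit of Boucksom--Hisamoto--Jonsson then translates the resulting log canonical bound into
$$(\text{second piece})\;\geq\;\bigl(\alpha(X,L)-\tfrac{n}{n+1}\nu(L)\bigr)\cdot\|(\mathcal{X},\mathcal{L})\|,$$
where $\|\cdot\|$ denotes the minimum norm of the test configuration, which is strictly positive on any non-trivial $(\mathcal{X},\mathcal{L})$. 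Hypothesis (B) then makes this piece strictly positive, and combining with non-negativity of the first piece yields $\mathrm{DF}(\mathcal{X},\mathcal{L})>0$, which gives $K$-stability.

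The main obstacle I anticipate is the third step: transporting a log canonical threshold bound for divisors on $X$ into a log discrepancy bound on the total space $\bar{\mathcal{X}}$ of an essentially arbitrary test configuration. This requires careful control of singularities under specialisation, and is where the log canonical singularities hypothesis on $X$ is genuinely used, since inversion of adjunction must be applied to singular pairs. Hypothesis (A), by contrast, enters only through a numerical intersection estimate and is relatively formal once the Wang--Odaka formula has been written down in the form above.
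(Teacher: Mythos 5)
The paper offers no proof of this statement: it is quoted verbatim from Dervan \cite{Dervan1} and used as a black box, so there is no internal argument of the authors to compare yours against. That said, your outline does track the structure of Dervan's published proof reasonably well: he works with the Odaka--Wang intersection-theoretic formula for the Donaldson--Futaki invariant, splits it into a term controlled by the nef class $-K_X-\frac{n}{n+1}\nu(L)L$ (which is exactly where hypothesis $(\mathrm{A})$ enters) and a term bounded below by $\bigl(\alpha(X,L)-\frac{n}{n+1}\nu(L)\bigr)$ times the minimum norm, and the bound on the second term does come from comparing a log canonical threshold of $(\bar{\mathcal{X}},\bar{\mathcal{X}}_0)$ with respect to $\mathcal{L}$ to $\alpha(X,L)$ by degenerating divisors $H\sim_{\mathbb{Q}}L$ across the family. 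So you have placed the two hypotheses where they are genuinely used. As a proof, however, your text is only a scaffold: the three load-bearing assertions --- the exact form of the decomposition of $\mathrm{DF}$ after pulling back to a resolution dominating $X\times\mathbb{P}^1$, the inequality $(\text{second piece})\geq\bigl(\alpha(X,L)-\frac{n}{n+1}\nu(L)\bigr)\cdot\|(\mathcal{X},\mathcal{L})\|$, and the strict positivity of the minimum norm on nontrivial normal test configurations --- are each nontrivial results (due to Odaka/Wang, Dervan, and Dervan and Boucksom--Hisamoto--Jonsson respectively) that you invoke rather than establish, and the ``non-Archimedean calculus'' sentence hides essentially all of the work. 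If your intent is, like the authors', to cite Dervan and use the theorem as an input, the outline is adequate; if the intent is to reprove the theorem, each of these steps needs a complete argument.
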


Namely, we will show that for smooth del Pezzo surfaces of degree one and two,
the condition $(\mathrm{B})$ in this theorem follows from the condition $(\mathrm{A})$.
The reader may wonder whether the condition $(\mathrm{B})$ in Theorem~\ref{theorem:Dervan} is redundant.
In general, this is not the case:

\begin{example}
\label{example:cubic-surface}
Let $S$ be a smooth cubic surface in $\mathbb{P}^3$ that does not contain Eckardt points, and let $E$ be a line in $S$.
Let $L=-K_{S}+xE$, where $x$ is a non-negative rational number such that $x<1$.
Then $L$ is ample and
$$
\frac{-K_S\cdot L}{L^2}=\frac{3+x}{3+2x-x^2},
$$
so that $-K_{S}-\frac{2}{3}\frac{-K_S\cdot L}{L^2}L$ is nef if and only if $x\leqslant\frac{3}{5}$.
On the other hand, we have $(1+x)E+C\sim_{\mathbb Q} L$, where $C$ is a conic in $|-K_S-E|$ that is tangent to $E$.
This immediately gives $\alpha(S,L)\leqslant\frac{3}{4+2x}$.
Thus, if $\frac{3}{5}\geqslant x\geqslant\frac{-1+2\sqrt{10}}{13}$, then $-K_{S}-\frac{2}{3}\frac{-K_S\cdot L}{L^2}L$ is nef,
while $\alpha(S,L)\leqslant\frac{2}{3}\frac{-K_S\cdot L}{L^2}$.
\end{example}

In this paper, we also prove the following negative result.

\begin{theorem}
\label{theorem:big-degree}
Let $S$ be a smooth del Pezzo surface such that $K_S^2\geqslant 4$, and let $L$ be an ample $\mathbb{Q}$-divisor on $S$.
Then
$\alpha(S,L)\leqslant\frac{2}{3}\frac{-K_S\cdot L}{L^2}$.
Moreover, the equality holds if and only if $K_S^2=4$ and $L\in\mathbb{Q}_{>0}[-K_{S}]$.
\end{theorem}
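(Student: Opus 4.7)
My plan is to prove the bound by constructing, for every ample $\mathbb{Q}$-divisor $L$ on $S$, an effective $\mathbb{Q}$-divisor $D\sim_{\mathbb{Q}} L$ and a point $p\in S$ with $\mathrm{mult}_p D\geq\frac{3L^2}{-K_S\cdot L}$. Blowing up $p$ and bounding the log discrepancy of the exceptional divisor then gives
\[
\alpha(S,L)\leq\mathrm{lct}_p(S,D)\leq\frac{2}{\mathrm{mult}_p D}\leq\frac{2(-K_S\cdot L)}{3L^2},
\]
which is the claimed bound. The Hodge index inequality $(-K_S\cdot L)^2\geq K_S^2\cdot L^2\geq 4L^2$ shows that the required multiplicity is at most $\tfrac{3}{2}\sqrt{L^2}$, so only moderately singular divisors are needed.

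The prototypical case is $K_S^2=4$ with $L=-K_S$: pick two $(-1)$-curves $E_1,E_2$ with $E_1\cdot E_2=1$ meeting at a point $p$, and a conic $C\in|-K_S-E_1-E_2|$ passing through $p$ (such a $C$ exists because $|-K_S-E_1-E_2|$ has positive dimension on a degree-$4$ del Pezzo surface, and imposing $p$ as a base point leaves a non-empty family). Then $D := E_1+E_2+C\sim-K_S$ has $\mathrm{mult}_p D=3=\frac{3L^2}{-K_S\cdot L}$, realising equality $\alpha(S,-K_S)\leq\tfrac{2}{3}$.

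For general ample $L$ on $S$ of degree $d\geq 4$, I would choose a point $p\in S$ (often at the intersection of two appropriately selected $(-1)$-curves, or generic when $d$ is large) and find smooth irreducible curves $C_1,\dots,C_k$ through $p$ such that $[L]=\sum c_i[C_i]$ with all $c_i\geq 0$. Then $D=\sum c_iC_i\sim_{\mathbb{Q}} L$ is effective with $\mathrm{mult}_p D=\sum c_i$, and the numerical identity $\sum c_i(-K_S\cdot C_i)=-K_S\cdot L$ together with the Hodge bound yields the required lower estimate $\sum c_i\geq\frac{3L^2}{-K_S\cdot L}$. The abundance of $(-1)$-curves and conic pencils on del Pezzo surfaces of degree $\geq 4$ provides the flexibility to realise such non-negative expansions.

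The equality clause follows by tracing the inequalities above: $\mathrm{lct}_p(D)=\frac{2}{\mathrm{mult}_p D}$ forces three smooth branches at $p$ with distinct tangent directions; the Hodge inequality is saturated precisely when $L\in\mathbb{Q}_{>0}[-K_S]$; and numerical compatibility in the chain then forces $K_S^2=4$. \emph{Main obstacle}: for arbitrary ample $L$, expressing $[L]$ as a non-negative combination of curve classes through a chosen point $p$ is a combinatorial constraint, and verifying it is likely to require separate arguments for each degree $d\in\{4,5,6,7,8,9\}$, together with a careful choice of $p$ dictated by the position of $[L]$ in the ample cone of $S$.
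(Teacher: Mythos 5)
There is a genuine gap, and it sits exactly where the theorem is hard. Your reduction $\alpha(S,L)\leq\mathrm{lct}_p(S,D)\leq\frac{2}{\mathrm{mult}_p(D)}$ is correct, and your prototype $D=E_1+E_2+C$ for $d=4$, $L=-K_S$ does give $\alpha(S,-K_S)\leq\frac{2}{3}$. But the step you defer as the ``main obstacle'' --- producing, for \emph{every} ample $L$, a decomposition $D=\sum c_iC_i\sim_{\mathbb{Q}}L$ with all $C_i$ smooth through a common point $p$ and $\sum c_i\geq\frac{3L^2}{-K_S\cdot L}$ --- is the entire content of the theorem, and the mechanism you propose for it does not work as stated. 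The identity $\sum c_i(-K_S\cdot C_i)=-K_S\cdot L$ only gives $\sum c_i\geq\frac{-K_S\cdot L}{\max_i(-K_S\cdot C_i)}$, and combined with Hodge ($(-K_S\cdot L)^2\geq dL^2$) this yields $\sum c_i\geq\frac{dL^2}{\max_i(-K_S\cdot C_i)\cdot(-K_S\cdot L)}$, which reaches the required $\frac{3L^2}{-K_S\cdot L}$ for $d=4$ only if every $C_i$ is a $(-1)$-curve. Yet one cannot write a general ample class --- not even $-K_S$ on a quartic del Pezzo --- as a non-negative combination of $(-1)$-curves through a single point (your own prototype needs a conic, which has anticanonical degree $2$ and breaks the estimate). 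So the claimed ``numerical identity plus Hodge'' deduction is false in general, and what remains is a case-by-case construction for each degree and each position of $L$ in the ample cone, which you have not carried out. The equality clause has a second gap: the ``if'' direction requires the \emph{lower} bound $\alpha(S,-K_S)\geq\frac{2}{3}$ for quartic del Pezzo surfaces (the paper invokes \cite[Theorem~1.7]{Ch08} for this), which your method, producing only upper bounds, cannot supply; and the ``only if'' direction requires strict inequality for every other pair $(d,L)$, i.e.\ a divisor whose multiplicity \emph{strictly} exceeds $\frac{3L^2}{-K_S\cdot L}$, which again is not delivered by tracing equalities in your chain.

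For comparison, the paper avoids point multiplicities almost entirely. After normalizing $L$ so that $K_S+L$ lies on the boundary of the Mori cone, it stratifies by the extremal face containing $K_S+L$ and the type of its contraction ($\mathbb{P}^2$, $\mathbb{F}_1$, or $\mathbb{P}^1\times\mathbb{P}^1$), writes $L\sim_{\mathbb{Q}}-K_S+\delta C+\sum a_iE_i$, and for each degree $4\leq d\leq 7$ exhibits explicit effective decompositions of $L$ in which a single $(-1)$-curve $E_1$ carries coefficient $\frac{3+2a_1+2\delta+N}{2}$ (or similar); the pair is then non-log-canonical \emph{along that curve} as soon as $\lambda$ exceeds the reciprocal of the coefficient, giving $\alpha(S,L)\leq\frac{2}{3+2a_1+2\delta+N}$ with no blow-up needed. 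The comparison of this bound with $\frac{2}{3}\nu(L)$, including strictness whenever $(a_1,\delta)\neq(0,0)$, is then an elementary (if lengthy) inequality, Proposition~\ref{proposition:Simons-center}. If you want to salvage your approach, you would need to replace the Hodge-index heuristic with explicit divisors adapted to the face of the Mori cone on which $K_S+L$ sits --- at which point you are reconstructing the paper's case analysis in a form that is, if anything, harder, because concentrating multiplicity at a point is a stronger demand than loading coefficient onto one curve.
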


Hence Theorem~\ref{theorem:main} exhausts the application of Theorem \ref{theorem:Dervan} to all smooth del Pezzo surfaces other than cubic surfaces.
Fortunately, we can still obtain partial results in the latter case.

\begin{theorem}
\label{theorem:cubic-surface-1}
Let $S$ be a smooth cubic surface in $\mathbb{P}^3$, and let $E_1$, $E_2$, $E_3$, $E_4$, $E_5$, $E_6$ be disjoint lines on the surface $S$.
Let $L=-K_{S}+x\sum_{i=1}^6E_i$, where $x$ is a non-negative rational number such that~$x<1$.
Then $L$ is ample. Furthermore, if $0<x\leqslant\frac{1}{10}$, then $(S,L)$ is $K$-stable.
\end{theorem}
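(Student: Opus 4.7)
The plan is to apply Dervan's criterion (Theorem~\ref{theorem:Dervan}) with $n=2$. Realize $S$ as a blow-up $\pi\colon S\to\mathbb{P}^2$ contracting the six disjoint lines $E_1,\dots,E_6$, and set $H=\pi^*\mathcal{O}_{\mathbb{P}^2}(1)$; the remaining $21$ lines on $S$ are then $L_{ij}=H-E_i-E_j$ for $1\le i<j\le 6$ and $Q_k=2H-\sum_{j\ne k}E_j$ for $1\le k\le 6$, and together with the $E_i$ they generate the Mori cone. A direct intersection computation gives
\[
L\cdot E_i=1-x,\qquad L\cdot L_{ij}=1+2x,\qquad L\cdot Q_k=1+5x,
\]
along with $L^2=3+12x-6x^2$ and $-K_S\cdot L=3+6x$, hence $\nu(L)=\tfrac{1+2x}{1+4x-2x^2}$. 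Since all three intersection numbers above are strictly positive for $0\le x<1$ and $L^2>0$, $L$ is ample.

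To verify condition $(\mathrm{A})$, I intersect $-K_S-\tfrac{2}{3}\nu(L)L$ against each of the $27$ lines. Setting $t=\tfrac{2}{3}\nu(L)$, the three resulting inequalities are $t(1-x)\le 1$, $t(1+2x)\le 1$, and $t(1+5x)\le 1$; the last is tightest and simplifies to $26x^2+2x-1\le 0$, that is $x\le\tfrac{-1+3\sqrt{3}}{26}\approx 0.161$, which is satisfied throughout the range $0<x\le\tfrac{1}{10}$.

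Condition $(\mathrm{B})$, namely $\alpha(S,L)>t$, is the heart of the argument. I would argue by contradiction: suppose there is an effective $D\sim_{\mathbb{Q}}L$ such that $(S,tD)$ is not log canonical at some $p\in S$. Decompose $D=\sum_{i=1}^6 a_iE_i+\Delta$ with $a_i\ge 0$ and $\Delta$ effective having no $E_j$-component. Pairing $D$ with $L_{ij}$ and $Q_k$ yields $a_i+a_j\le 1+2x$ (hence $a_i\le 1+2x$) and $\sum_{k\ne i}a_k\le 1+5x$, provided those lines are not contained in $\mathrm{Supp}(\Delta)$; the residual case is handled by iteratively absorbing them into the fixed part of the decomposition. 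Since $S$ is smooth, the non-lc hypothesis forces $\mathrm{mult}_p(D)>1/t$.

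Case-split on the location of $p$. If $p\notin\bigcup E_i$, then $\mathrm{mult}_p(D)=\mathrm{mult}_p(\Delta)$; for each $i$ the generically irreducible conic $C_i\in|H-E_i|$ through $p$ satisfies $C_i\cdot\Delta=2+x-a_i$, so $\mathrm{mult}_p(\Delta)\le 2+x-a_i$, pinning every $a_i<2+x-1/t$. Combining these with $-K_S\cdot\Delta=3+6x-\sum a_i$ and Tian's equality $\alpha(S,-K_S)=\tfrac{2}{3}$---applied to an effective perturbation of $\Delta$ by the $E_i$'s that lands in $|{-K_S}|$---yields a numerical contradiction for $x\le\tfrac{1}{10}$. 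If instead $p\in E_i$ for a (necessarily unique) $i$, then $\mathrm{mult}_p(\Delta)\le\Delta\cdot E_i=1-x+a_i$; running through the curves meeting $E_i$ at $p$ (namely $C_i\in|H-E_i|$, and possibly an $L_{ij}$ or $Q_k$ passing through $p$) progressively constrains the $a_j$, and the bound $t(1+2x)<1$ valid in our range then forces a contradiction. I expect this second case to be the main obstacle: the coefficient $ta_i$ along $E_i$ and the contact of $\Delta$ with $E_i$ at $p$ have to be balanced simultaneously against every other line through $p$, and the precise value $x\le\tfrac{1}{10}$ is calibrated so that the whole chain of numerical inequalities closes at once.
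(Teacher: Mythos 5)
Your setup is sound: the intersection numbers, the value $\nu(L)=\frac{1+2x}{1+4x-2x^2}$, the ampleness of $L$, and the verification of condition $(\mathrm{A})$ of Theorem~\ref{theorem:Dervan} against the $27$ lines (with the binding constraint $26x^2+2x-1\leqslant 0$ coming from the conics $Q_k$) are all correct, and this part usefully complements the paper, which passes over $(\mathrm{A})$ quickly and goes straight to proving $\alpha(S,L)\geqslant\frac{2}{3+3x}$, a bound slightly stronger than $\frac{2}{3}\nu(L)$ for $0<x<\frac{1}{4}$. The gap is entirely in condition $(\mathrm{B})$, and it is genuine in both of your cases. For $p\notin E_1\cup\dots\cup E_6$, the mechanism you propose --- perturb $\lambda D$ by an effective combination of the $E_i$ and conics so as to obtain an effective $\Omega\sim_{\mathbb{Q}}-K_S$ and then ``contradict'' $\alpha(S,-K_S)=\frac{2}{3}$ --- cannot work: producing an effective $\Omega\sim_{\mathbb{Q}}-K_S$ with $(S,\Omega)$ not log canonical at $p$ is perfectly consistent with $\alpha(S,-K_S)=\frac{2}{3}<1$, since the coefficient in front of $\Omega$ is $1$ rather than $\frac{2}{3}$; no contradiction arises. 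Likewise the raw multiplicity bound $\mathrm{mult}_p(D)\leqslant C_i\cdot D=2+x$ never beats $\frac{1}{\lambda}>\frac{3+3x}{2}$ for any $x<1$, so pure multiplicity counting cannot close this case either. The paper closes it differently: after showing the non-log-canonical locus of $(S,\lambda D)$ is zero-dimensional (Lemma~\ref{lemma:cubic-surface-codimension-one}), it pushes $\lambda D$ down to $\mathbb{P}^2$, adds a line $\ell$ missing $h(p)$, and applies Shokurov's connectedness principle to force $P\in E_1\cup\dots\cup E_6$ (Lemma~\ref{lemma:cubic-surface-A-P-in-E1-E2-E3-E4-E5-E6}).

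For $p\in E_i$, which you correctly flag as the main obstacle, first-order intersection bounds against the $27$ lines do not suffice, and no amount of ``progressively constraining the $a_j$'' at the level of global intersection numbers will close it. The paper's argument needs two ingredients absent from your sketch: (i) the auxiliary divisor $\Upsilon$ with $\lambda D+\Upsilon\sim_{\mathbb{Q}}-K_S$ is used not to contradict an $\alpha$-invariant but to invoke the structural result \cite[Corollary~1.24]{CheltsovParkWon}, which, together with a two-stage blow-up excluding the tacnodal tangent hyperplane section, forces $P$ to be an Eckardt point (Lemma~\ref{lemma:cubic-surface-Eckardt-point}); (ii) at the Eckardt point one shows that all three lines $E_1$, $L_1$, $L_2$ through $P$ lie in $\mathrm{Supp}(D)$, solves the linear system \eqref{equation:cubic-surfaces} for $\mathrm{mult}_P(\Delta)$, blows up $P$, and applies inversion of adjunction (Lemma~\ref{lemma:adjunction}) on the exceptional curve, using Remark~\ref{remark:convexity} along the way to remove components from $\mathrm{Supp}(\Delta)$. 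The borderline configurations are concentrated at Eckardt points and only become visible after blowing up, which is exactly the step your proposal omits.
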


Since all smooth cubic surfaces are K\"ahler-Einstein by Tian's theorem \cite{Tian1990},
the $K$-stability of the pair $(S,L)$ in this theorem holds also for $x=0$.
Moreover, Arezzo and Pacard's results \cite{AP2} imply the $K$-stability of the pair $(S,L)$ in the case when $x$ is sufficiently close to $1$.
Thus, one may expect that the pair $(S,L)$ is $K$-stable for all positive rational numbers $x<1$.
Unfortunately, we were unable to prove this.

Let us describe the structure of this paper.
In Section~\ref{section:background} we present some auxiliary results.
In Section~\ref{section:degree-one}, we give a very short proof of our Theorem~\ref{theorem:main} for smooth del Pezzo surfaces of degree one.
In Section~\ref{section:degree-two}, we prove Theorem~\ref{theorem:main} for smooth del Pezzo surfaces of degree two.
This section is the main part of the paper.
Then we prove Theorem~\ref{theorem:cubic-surface-1} in Section~\ref{section:cubic-surfaces}.
Finally, we tackle Theorem~\ref{theorem:big-degree} in Section~\ref{section:deg-4}.
Its proof involves two very technical inequalities which are left to the Appendix~\ref{section:inequalities}.

\subsection*{Acknowledgements}
We would like to thank Ruadha\'i Dervan, Sir Simon Donaldson, Julius Ross, Yanir Rubinstein and Richard Thomas for helpful discussions and valuable comments.

We proved Theorem~\ref{theorem:main} during our visit to the Simons Center for Geometry and Physics in November 2015.
We would like to thank the center for perfect working conditions.

\section{Preliminaries}
\label{section:background}

Let $S$ be a~smooth surface, let $D$ be an effective
$\mathbb{Q}$-divisor on the surface $S$, and let $P$ be a point in
the surface $S$. Let $D=\sum_{i=1}^{r}a_iC_i$, where each $C_i$ is
an irreducible curve on $S$, and each $a_i$ is a non-negative rational number.
We assume here that all curves $C_1,\ldots,C_r$ are different.

Let $\pi\colon\widetilde{S}\to S$ be a
birational morphism such that $\widetilde{S}$ is also smooth.
Then $\pi$ is a composition of $n$ blow ups of smooth points.
For each $C_i$, denote by $\widetilde{C}_i$ its proper transform on $\widetilde{S}$.
Let $F_1,\ldots, F_n$ be $\pi$-exceptional curves.
Then
$$
K_{\widetilde{S}}+\sum_{i=1}^{r}a_i\widetilde{C}_i+\sum_{j=1}^{n}b_jF_j\sim_{\mathbb{Q}}\pi^{*}\big(K_{S}+D\big)
$$
for some rational numbers $b_1,\ldots,b_n$. Suppose, in addition,
that $\sum_{i=1}^r\widetilde{C}_i+\sum_{j=1}^n F_j$ is a divisor with
simple normal crossings (in particular each $\widetilde C_i$ is smooth).

\begin{definition}
\label{definition:lct}  The log pair $(S,D)$ is \emph{log canonical} (respectively \emph{Kawamata log terminal}) at a point $P\in S$ if
the following two conditions are satisfied:
\begin{itemize}
\item $a_i\leqslant 1$ (respectively $a_i<1$) for every $C_i$ such that $P\in C_i$,
\item $b_j\leqslant 1$ (respectively $b_j<1$) for every $F_j$ such that $\pi(F_j)=P$.
\end{itemize}
\end{definition}

This definition is independent on the choice of birational morphism $\pi\colon\widetilde{S}\to S$.
The log pair $(S,D)$ is said to be \emph{log canonical} (respectively \emph{Kawamata log terminal}) if it is log
canonical (respectively, \emph{Kawamata log terminal}) at every point of $S$.

\begin{remark}
\label{remark:convexity} Let $R$ be any effective
$\mathbb{Q}$-divisor on $S$ such that $R\sim_{\mathbb{Q}} D$ and
$R\ne D$. Let
$$
D_{\epsilon}=(1+\epsilon) D-\epsilon R,
$$
where $\epsilon\in \mathbb{Q}_{\geqslant 0}$. Then $D_{\epsilon}\sim_{\mathbb{Q}} D$.
Let $\epsilon_0$ be the greatest rational number such that $D_{\epsilon_0}$ is effective.
Then the support of  $D_{\epsilon_0}$ does not contain at least one irreducible component of $\mathrm{Supp}(R)$.
Moreover, if the log pair $(S,D)$ is not log canonical at $P$, and $(S,R)$ is log canonical at $P$, then the log pair $(S,D_{\epsilon_0})$ is not log canonical at $P$, because
$$
D=\frac{1}{1+\epsilon_0}D_{\epsilon_0}+\frac{\epsilon_0}{1+\epsilon_0}R.
$$
\end{remark}

The following result is well known and is very easy to prove.

\begin{lemma}
\label{lemma:Skoda} Suppose that $(S,D)$ is not log canonical at $P$. Then $\mathrm{mult}_{P}(D)>1$.
\end{lemma}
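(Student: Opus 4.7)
My plan is to argue by contrapositive: assuming $\mathrm{mult}_P(D)\leqslant 1$, I would show that $(S,D)$ is log canonical at $P$. First observe that if some component $C_i$ with $P\in C_i$ had coefficient $a_i>1$, then $\mathrm{mult}_P(D)\geqslant a_i\cdot\mathrm{mult}_P(C_i)\geqslant a_i>1$, contradicting the assumption. So all $a_i\leqslant 1$, and any failure of log canonicity must come from an exceptional divisor $F$ over $P$ on some log resolution $\pi\colon\widetilde{S}\to S$ whose coefficient $b$ in the log pullback exceeds $1$, i.e.\ whose discrepancy $a(F;S,D)$ is less than $-1$. The goal is to show no such $F$ exists.

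The core of the argument is the standard inequality on smooth surfaces
$$
a(F;S)+1\geqslant\mathrm{ord}_F\big(\pi^{*}\mathfrak{m}_P\big)=:m_F,
$$
where $a(F;S)$ is the discrepancy of $F$ with respect to $S$ alone. I would prove this by induction on the number of point blow-ups needed to extract $F$: for the first blow-up both sides equal $1$, and at each subsequent blow-up at a point $P_k$ lying on previously extracted exceptional divisors, both the discrepancy and $m_F$ jump by the sum of the orders of those divisors at $P_k$, preserving the inequality.

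Combined with this, I would use the elementary bound
$$
\mathrm{ord}_F\big(\pi^{*}D\big)=\sum_i a_i\cdot\mathrm{ord}_F\big(\pi^{*}C_i\big)\leqslant m_F\cdot\sum_i a_i\,\mathrm{mult}_P(C_i)=m_F\cdot\mathrm{mult}_P(D),
$$
which holds because the local equation of $C_i$ lies in $\mathfrak{m}_P^{\mathrm{mult}_P(C_i)}$. Putting $\mu=\mathrm{mult}_P(D)\leqslant 1$ and $a=a(F;S)\geqslant 0$, these two estimates give
$$
a(F;S,D)=a(F;S)-\mathrm{ord}_F\big(\pi^{*}D\big)\geqslant a-\mu(a+1)=a(1-\mu)-\mu\geqslant -1,
$$
so no exceptional divisor can witness a non-log-canonical singularity, completing the proof.

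The only real obstacle is the inductive inequality $a(F;S)+1\geqslant m_F$; everything else is formal manipulation of the definitions and the trivial bound on the order along the exceptional divisor. Given the remark in the paper that this lemma ``is very easy to prove,'' I expect the authors may instead give a direct one-line argument using the blow-up at $P$: if $\mu\leqslant 1$, the coefficient $\mu-1$ of the exceptional divisor $E_1$ is at most $0$, and one iterates, but the cleanest self-contained write-up really does reduce to the discrepancy inequality above.
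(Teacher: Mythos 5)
The paper offers no proof of this lemma (it is cited as well known), so the only question is whether your argument is sound, and unfortunately it is not: the ``elementary bound'' $\mathrm{ord}_F(\pi^{*}C_i)\leqslant m_F\cdot\mathrm{mult}_P(C_i)$ is stated in the wrong direction. The fact that the local equation $f_i$ of $C_i$ lies in $\mathfrak{m}_P^{\mathrm{mult}_P(C_i)}$ gives the \emph{lower} bound $\mathrm{ord}_F(\pi^{*}C_i)\geqslant \mathrm{mult}_P(C_i)\cdot\mathrm{ord}_F(\pi^{*}\mathfrak{m}_P)$, which is useless here, and the claimed upper bound is simply false. For a concrete counterexample take $C=\{y=0\}\subset\mathbb{A}^2$, $P$ the origin, and let $F$ be the exceptional divisor of the second blow-up, centred at the point where the strict transform of $C$ meets the first exceptional curve: then $\mathrm{ord}_F(\pi^{*}C)=2$, while $m_F=\mathrm{ord}_F(\pi^{*}\mathfrak{m}_P)=1$ (a general line through $P$ misses the centre of the second blow-up) and $\mathrm{mult}_P(C)=1$. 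Since your final display rests entirely on $\mathrm{ord}_F(\pi^{*}D)\leqslant m_F\cdot\mathrm{mult}_P(D)$, the proof collapses at this point. (Your first inequality, $a(F;S)+1\geqslant m_F$, is correct, although at the first blow-up the two sides are $2$ and $1$, not both $1$.)

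What your computation actually needs is $\mathrm{ord}_F(\pi^{*}D)\leqslant\big(a(F;S)+1\big)\mathrm{mult}_P(D)$, and since $\mathrm{ord}_F(\pi^{*}D)$ is linear in $D$, this inequality is \emph{equivalent} to the lemma itself; any derivation of it must therefore contain the real inductive content. That content is precisely the iterated blow-up argument you dismiss in your last paragraph as a ``one-line'' alternative: blow up $P$, note that the coefficient $\mathrm{mult}_P(D)-1$ of the exceptional curve in the log pullback is non-positive, check that the multiplicity of the new boundary at any point $Q$ of the exceptional curve is again at most $\mathrm{mult}_P(D)\leqslant 1$ (using that $\mathrm{mult}_Q(\widehat{D})\leqslant\mathrm{mult}_P(D)$ for the strict transform of an effective divisor, together with the non-positivity of the exceptional coefficient), and induct on the number of blow-ups needed to extract a given divisor over $P$. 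That is the standard argument the authors have in mind when they call the lemma very easy, and it is the one you should write out; as it stands, your write-up does not prove the statement.
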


The following result is also well known (see \cite[Corollary~3.12]{Shokurov} or \cite[Theorem~7]{Ch13}).

\begin{lemma}
\label{lemma:adjunction}
Suppose that $(S,D)$ is not log canonical at $P$, the curve $C_1$ is smooth at $P$, and $a_1\leqslant 1$.
Let $\Delta=\sum_{i=2}^{r}a_iC_i$. Then  $\mathrm{mult}_{P}(C_1\cdot\Delta)>1$.
\end{lemma}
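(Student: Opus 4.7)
The plan is to reduce to the case where the coefficient of $C_1$ in the boundary equals exactly $1$, and then invoke inversion of adjunction on the smooth curve $C_1$ at the point $P$.

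First I would replace $D$ by the effective $\mathbb{Q}$-divisor $D' := C_1 + \Delta$. Since $0 \leqslant a_1 \leqslant 1$, we have $D' \geqslant D$ as divisors, and on any fixed log resolution of $\mathrm{Supp}(D) \cup \mathrm{Supp}(D')$ the discrepancy of each exceptional divisor is linear with non-positive slope in the boundary coefficients; hence $a(F;S,D') \leqslant a(F;S,D)$ for every exceptional divisor $F$. Consequently, the fact that $(S,D)$ is not log canonical at $P$ upgrades to the same statement for $(S,D')$.

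Next, the pair $(S, C_1 + \Delta)$ is precisely the setup of inversion of adjunction for a smooth curve on a smooth surface: $C_1$ is smooth at $P$, is not a component of $\mathrm{Supp}(\Delta)$, and has boundary coefficient exactly $1$. Applying the classical statement (cf.\ \cite[Corollary~3.12]{Shokurov} or \cite[Theorem~7]{Ch13}), the pair $(S, C_1 + \Delta)$ is log canonical at $P$ if and only if the effective divisor $\Delta|_{C_1}$ on the smooth curve $C_1$ has coefficient at most $1$ at $P$. Since $C_1$ is smooth at $P$, this coefficient equals $\mathrm{mult}_P(C_1 \cdot \Delta)$, so the failure of log canonicity forces $\mathrm{mult}_P(C_1 \cdot \Delta) > 1$.

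The main obstacle, if any, is the inversion of adjunction step; in this two-dimensional smooth setting it is entirely classical and can also be obtained, should one wish to avoid the citation, by a direct induction on blow-ups at $P$ and at the infinitely near points along the strict transform of $C_1$, tracking how the coefficient of $\widetilde{C}_1$, of each successive exceptional divisor, and of the strict transform of $\Delta$ evolve (the key local fact being that $\mathrm{mult}_P(C_1 \cdot \Delta) \geqslant \mathrm{mult}_P(\Delta)$ because $C_1$ is smooth at $P$). I would simply cite the references rather than reproduce this bookkeeping.
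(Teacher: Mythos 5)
Your argument is correct: raising the coefficient of $C_1$ to $1$ only decreases discrepancies, and then inversion of adjunction on the smooth germ of $C_1$ at $P$ gives exactly $\mathrm{mult}_P(C_1\cdot\Delta)>1$. The paper offers no proof of this lemma at all---it simply cites \cite[Corollary~3.12]{Shokurov} and \cite[Theorem~7]{Ch13}---and your outline is precisely the standard argument behind those citations, so there is nothing to compare beyond noting agreement.
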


The following result plays an important role in the proof of Theorem~\ref{theorem:main}.

\begin{theorem}[{\cite[Theorem~13]{Ch13}}]
\label{theorem:Trento}
Suppose that $(S,D)$ is not log canonical at $P$, the curves $C_1$ and $C_2$ are smooth at $P$ and intersect each other transversally at $P$,
$a_1\leqslant 1$ and $a_2\leqslant 1$.
Let $\Delta=\sum_{i=3}^{r}a_iC_i$.
If $\mathrm{mult}_{P}(\Delta)\leqslant 1$, then $\mathrm{mult}_{P}(C_1\cdot\Delta)>2\big(1-a_{2}\big)$ or $\mathrm{mult}_{P}(C_2\cdot\Delta)>2\big(1-a_{1}\big)$.
\end{theorem}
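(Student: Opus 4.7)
The plan is to blow up $P$ and argue by induction. Let $\pi\colon\widetilde{S}\to S$ be the blow up of $P$ with exceptional curve $E$, and write $\widetilde{C}_i,\widetilde{\Delta}$ for the proper transforms. Set $b=\mathrm{mult}_P(\Delta)\leqslant 1$ and $\theta=a_1+a_2+b-1$. The pull-back formula
\begin{equation*}
\pi^*(K_S+D)=K_{\widetilde{S}}+a_1\widetilde{C}_1+a_2\widetilde{C}_2+\widetilde{\Delta}+\theta E
\end{equation*}
together with Lemma~\ref{lemma:Skoda} yields $\theta>0$, and hence the pair on $\widetilde{S}$ fails log canonicity at some point of $E$. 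The two points $Q_i:=E\cap\widetilde{C}_i$ are distinct by the transversality of $C_1$ and $C_2$ at $P$; the identity $\mathrm{mult}_P(C_i\cdot\Delta)=b+\mathrm{mult}_{Q_i}(\widetilde{C}_i\cdot\widetilde{\Delta})$ together with $E\cdot\widetilde{\Delta}=b$ will translate bounds on $\widetilde{S}$ back to $S$.

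I would first dispose of the regime $\theta>1$, i.e.\ $a_1+a_2+b>2$. After relabeling so that $a_1\geqslant a_2$, one has $2a_1+b\geqslant a_1+a_2+b>2$, hence $b>2(1-a_1)$, so the trivial estimate $\mathrm{mult}_P(C_2\cdot\Delta)\geqslant\mathrm{mult}_P(C_2)\cdot\mathrm{mult}_P(\Delta)=b$ already furnishes the second alternative of the conclusion.

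Assume from now on that $\theta\leqslant 1$. If the non-log-canonical point $Q\in E$ lay off $\widetilde{C}_1\cup\widetilde{C}_2$, Lemma~\ref{lemma:adjunction} applied with $E$ at $Q$ would force $\mathrm{mult}_Q(E\cdot\widetilde{\Delta})>1$, contradicting $E\cdot\widetilde{\Delta}=b\leqslant 1$. Hence $Q\in\{Q_1,Q_2\}$, and the symmetry of the statement lets me take $Q=Q_1$. The key step is then to invoke Theorem~\ref{theorem:Trento} inductively on $(\widetilde{S},a_1\widetilde{C}_1+\widetilde{\Delta}+\theta E)$ at $Q_1$, using $\widetilde{C}_1$ and $E$ as the two transversal smooth curves; the multiplicity hypothesis holds because $\mathrm{mult}_{Q_1}(\widetilde{\Delta})\leqslant E\cdot\widetilde{\Delta}=b\leqslant 1$. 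Either $\mathrm{mult}_{Q_1}(\widetilde{C}_1\cdot\widetilde{\Delta})>2(1-\theta)$, which translates to $\mathrm{mult}_P(C_1\cdot\Delta)>4-2a_1-2a_2-b$; combined with $\mathrm{mult}_P(C_2\cdot\Delta)\geqslant b$, the sum exceeds $2(1-a_1)+2(1-a_2)$, and the conclusion of the theorem follows. Or $\mathrm{mult}_{Q_1}(E\cdot\widetilde{\Delta})>2(1-a_1)$, in which case $\mathrm{mult}_P(C_2\cdot\Delta)\geqslant b\geqslant\mathrm{mult}_{Q_1}(E\cdot\widetilde{\Delta})>2(1-a_1)$ again gives the conclusion directly.

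The main obstacle is making the induction rigorous: superficially, the pair on $\widetilde{S}$ looks just like the one on $S$, so a well-founded invariant has to be identified. The natural choice is the minimal number of successive blow ups over $P$ needed to produce a divisor with log discrepancy at most zero with respect to $(S,D)$; the preliminary case $\theta>1$ handled above is exactly the base case, and each blow up strictly decreases this invariant. A secondary technical point is verifying that the intersection translation $\mathrm{mult}_P(C_1\cdot\Delta)=b+\mathrm{mult}_{Q_1}(\widetilde{C}_1\cdot\widetilde{\Delta})$ exhausts all the intersection above $P$, which it does because $\widetilde{C}_1$ meets $\pi^{-1}(P)=E$ only at $Q_1$.
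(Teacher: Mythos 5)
The paper does not actually prove Theorem~\ref{theorem:Trento} --- it is quoted from \cite{Ch13} --- but your argument is correct and is essentially the proof given there: induction on the minimal number of blow-ups over $P$ needed to extract a divisor of negative log discrepancy, with the case $\mathrm{mult}_P(D)>2$ (i.e.\ $\theta>1$) as the base, Lemma~\ref{lemma:adjunction} confining the non-log-canonical point of $E$ to $\widetilde{C}_1\cup\widetilde{C}_2$, and the pair $(\widetilde{C}_1,E)$ with coefficients $(a_1,\theta)$ feeding the inductive hypothesis. The one slip is in the phrasing of your well-founding invariant: it should count blow-ups until a divisor with coefficient $b_j>1$ (log discrepancy strictly negative) appears, not ``at most zero'' ($b_j\geqslant 1$), since your direct argument for the base case genuinely needs the strict inequality $\theta>1$ and the inductive step needs $\theta\leqslant 1$.
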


Let $\rho\colon\widehat{S}\to S$ be the~blow up of a point $P\in S$, and let
$F$ be the $\rho$-exceptional curve. Denote by $\widehat{D}$ the~proper
transform of the divisor $D$ on the surface $\widehat{S}$ via $\rho$.
Then
$$
K_{\widehat{S}}+\widehat{D}+\Big(\mathrm{mult}_{P}(D)-1\Big)F\sim_{\mathbb{Q}}\rho^{*}\big(K_{S}+D\big).
$$
By Definition~\ref{definition:lct}, if $(S,D)$ is not log canonical
at $P$, then $(\widehat{S}, \widehat{D}+(\mathrm{mult}_{P}(D)-1)F)$ is
not log canonical at some point of the curve $F$.

\section{Del Pezzo surfaces of degree one}
\label{section:degree-one}

Let $S$ be a smooth del Pezzo surface such that $K_S^2=1$ and $L$ be an ample $\mathbb{Q}$-divisor on the surface $S$.
The goal of this section is to prove the following result:

\begin{theorem}
\label{theorem:degree-one}
If $-K_S-\frac{2}{3}\frac{-K_S\cdot L}{L^2}L$ is nef, then $\alpha(S,L)>\frac{2}{3}\frac{-K_S\cdot L}{L^2}$.
\end{theorem}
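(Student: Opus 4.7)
The plan is to argue by contradiction: suppose there exist an effective $\mathbb{Q}$-divisor $D\sim_{\mathbb{Q}}L$ and a point $P\in S$ for which $(S,\lambda D)$ fails to be log canonical at $P$, where $\lambda:=\frac{2}{3}\frac{-K_{S}\cdot L}{L^{2}}$, and derive a numerical contradiction from the nefness hypothesis. The first ingredient is the bound one extracts by squaring the nef $\mathbb{Q}$-divisor $-K_{S}-\lambda L$: since $(-K_{S}-\lambda L)^{2}\geqslant 0$ and $K_{S}^{2}=1$, substituting $\lambda=\frac{2(-K_{S}\cdot L)}{3L^{2}}$ collapses the inequality to $(-K_{S}\cdot L)^{2}\leqslant\frac{9}{8}L^{2}$. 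In particular $2(-K_{S}\cdot L)^{2}<3L^{2}$ strictly; this slack is what will upgrade the conclusion to the \emph{strict} inequality $\alpha(S,L)>\frac{2}{3}\frac{-K_{S}\cdot L}{L^{2}}$.

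Next I pick an anticanonical curve $T\in|-K_{S}|$ passing through $P$. Since the anticanonical pencil on a del Pezzo surface of degree one has a unique base point $O$, for $P\neq O$ there is a unique such $T$, and for $P=O$ a generic member serves; I assume first that $T$ is smooth at $P$, postponing the finitely many points at which the unique anticanonical curve through $P$ is singular at $P$. Decomposing $D=aT+\Delta$ with $a\geqslant 0$ and $T\not\subset\operatorname{Supp}(\Delta)$, I intersect the effective class $\Delta\sim_{\mathbb{Q}}L+aK_{S}$ with the ample divisor $L$: this yields $0\leqslant \Delta\cdot L=L^{2}-a(-K_{S}\cdot L)$, hence $a\leqslant L^{2}/(-K_{S}\cdot L)$ and therefore $\lambda a\leqslant 2/3<1$. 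Now Lemma~\ref{lemma:adjunction} applied to $(S,\lambda D)$ with the smooth curve $T$ yields $\operatorname{mult}_{P}(T\cdot\lambda\Delta)>1$, i.e.\ $T\cdot\Delta>1/\lambda$.

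Since $T\sim -K_{S}$, one computes $T\cdot\Delta=T\cdot D-aT^{2}=-K_{S}\cdot L-a$, so the previous inequality rewrites as $-K_{S}\cdot L-a>\frac{3L^{2}}{2(-K_{S}\cdot L)}$, whence $2(-K_{S}\cdot L)^{2}>3L^{2}+2a(-K_{S}\cdot L)\geqslant 3L^{2}$, contradicting the bound from the first paragraph. The main obstacle I anticipate is the exceptional case in which the unique anticanonical curve through $P$ is singular at $P$: this is a finite locus on $S$, and for each such $P$ I would blow up $P$ and apply Theorem~\ref{theorem:Trento} to the exceptional curve together with a smooth branch of the proper transform of $T$, again exploiting the multiplicity lower bound $\operatorname{mult}_{P}(D)>1/\lambda$ from Lemma~\ref{lemma:Skoda} to close the argument with the same numerical slack.
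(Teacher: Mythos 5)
Your treatment of the case where the anticanonical curve $T$ through $P$ is smooth at $P$ is correct, and it is essentially the paper's own argument in un-normalized form: the decomposition $D=aT+\Delta$, the bound $\lambda a\leqslant\frac{2}{3}$ from intersecting $\Delta$ with $L$, and Lemma~\ref{lemma:adjunction} giving $T\cdot\Delta>1/\lambda$, which collides with $(-K_S\cdot L)^2\leqslant\frac{9}{8}L^2$ coming from $(-K_S-\lambda L)^2\geqslant 0$. One presentational caveat: since you test non-log-canonicity only at $\lambda$ itself, what you literally prove is $\alpha(S,L)\geqslant\lambda$; the strict inequality requires rerunning the argument with a slightly larger multiplier, which your uniform slack $2(-K_S\cdot L)^2\leqslant\frac{9}{4}L^2<3L^2$ does permit in this case, but which must be carried out (the paper does this by introducing $\epsilon=-K_S\cdot(-K_S-L)$ and an explicit $\gamma>1$ depending on $\epsilon$).

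The genuine gap is the singular case, which is where the paper spends nearly all of its effort, and your sketch of it would not go through as written. A degree one del Pezzo surface has anticanonical members with a cusp, and for such a $T$ the (smooth) proper transform $\widetilde T$ is \emph{tangent} to the exceptional curve $F$ at their unique common point; Theorem~\ref{theorem:Trento} requires the two curves to meet transversally, so it cannot be applied to $F$ and a branch of $\widetilde T$ precisely in this sub-case. Even in the nodal sub-case you would still have to verify the hypotheses: that the coefficient $\lambda(2a+m)-1$ of $F$ (where $m=\mathrm{mult}_P(\Delta)$) does not exceed $1$, that the non-log-canonical point $Q$ of the blown-up pair actually lies on $\widetilde T$ (excluded otherwise by adjunction along $F$), and that $\mathrm{mult}_Q$ of the residual divisor is at most $1$ --- none of which appears in your text. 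The right tool, and the one the paper uses, is Lemma~\ref{lemma:adjunction} applied on the blow-up to the smooth curve $\widetilde T$ with $\widetilde T\cdot F=2$: it yields
$$
1<\widetilde T\cdot\Big(\lambda\widetilde{\Delta}+\big(\lambda(2a+m)-1\big)F\Big)=\lambda\big(-K_S\cdot L+3a\big)-2,
$$
and your own bounds $\lambda(-K_S\cdot L)\leqslant\frac{3}{4}$ and $3\lambda a\leqslant 2$ then give $3<\frac{11}{4}$, a contradiction. So your numerical framework is sound and the singular case can be closed within it, but the step you actually propose (Trento plus Skoda) is the wrong one and the case is not proved as it stands.
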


Let us prove this result. Suppose that $-K_S-\frac{2}{3}\frac{-K_S\cdot L}{L^2}L$ is nef.
We can swap $L$ with $\frac{2}{3}\frac{-K_S\cdot L}{L^2}L$ and assume that $\frac{-K_S\cdot L}{L^2}=\frac{3}{2}$. Then
\begin{equation}
\label{equation:slope-condition-1}
\Big(L-\frac{2}{3}\big(-K_S\big)\Big)\cdot L=0,
\end{equation}
and the class $-K_{S}-L$ is nef. We have to show that $\alpha(S,L)>1$.

Let $R=-K_S-L$ and $\epsilon=-K_{S}\cdot R$. Then $\epsilon>0$, because $R\not\sim_{\mathbb{Q}} 0$ by \eqref{equation:slope-condition-1}.
Let
\begin{equation}
\label{equation:gamma-1}
\gamma=\left\{%
\aligned
&\frac{6}{5}\ \mathrm{if}\ \epsilon\geqslant\frac{1}{2},\\%
&\frac{3}{3-\epsilon}\ \mathrm{if}\ \epsilon<\frac{1}{2}.\\%
\endaligned\right.%
\end{equation}
Then $\gamma>1$. We will prove that $\alpha(S,L)\geqslant\gamma$ by \emph{reductio ad absurdum}.

Suppose that $\alpha(S,L)<\gamma$. Then there exist a positive rational number $\lambda<\gamma$ and an effective $\mathbb{Q}$-divisor $D$ on $S$ such that $D\sim_{\mathbb{Q}} L$,
and $(S,\lambda D)$ is not log canonical at some point~$P\in S$.

\begin{lemma}
\label{lemma:degree-one-1}
One has $L\not\sim_{\mathbb{Q}}-\frac{2}{3}K_{S}$.
\end{lemma}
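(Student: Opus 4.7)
The plan is a short argument by contradiction that exploits the scaling behaviour of the $\alpha$-invariant and the known lower bound for $\alpha(S,-K_S)$ on smooth del Pezzo surfaces of degree one.

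Suppose, to the contrary, that $L\sim_{\mathbb{Q}}-\tfrac{2}{3}K_S$. The first step is to compute $\gamma$ explicitly under this hypothesis. We have
$R=-K_S-L\sim_{\mathbb{Q}}-\tfrac{1}{3}K_S$, hence $\epsilon=-K_S\cdot R=\tfrac{1}{3}K_S^2=\tfrac{1}{3}<\tfrac{1}{2}$. Plugging this into the definition \eqref{equation:gamma-1} of $\gamma$ gives $\gamma=\tfrac{3}{3-1/3}=\tfrac{9}{8}$.

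The second step is to rewrite $\alpha(S,L)$ in terms of $\alpha(S,-K_S)$. Because $D\sim_{\mathbb{Q}}L=\tfrac{2}{3}(-K_S)$ is equivalent to $\tfrac{3}{2}D\sim_{\mathbb{Q}}-K_S$, the $\alpha$-invariant scales as $\alpha(S,L)=\tfrac{3}{2}\alpha(S,-K_S)$. Now invoke the standard lower bound $\alpha(S,-K_S)\geqslant\tfrac{5}{6}$ for smooth del Pezzo surfaces of degree one (this bound, due to Cheltsov's analysis of anticanonical log canonical thresholds on degree one del Pezzos, is sharp and is in particular what one uses to deduce the existence of K\"ahler--Einstein metrics on all such surfaces). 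This yields
$$
\alpha(S,L)=\frac{3}{2}\alpha(S,-K_S)\geqslant \frac{3}{2}\cdot\frac{5}{6}=\frac{5}{4}>\frac{9}{8}=\gamma,
$$
which contradicts the standing assumption $\alpha(S,L)<\gamma$. Hence $L\not\sim_{\mathbb{Q}}-\tfrac{2}{3}K_S$.

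The only non-trivial ingredient is the bound $\alpha(S,-K_S)\geqslant\tfrac{5}{6}$; everything else is a one-line calculation. Note that the weaker bound $\alpha(S,-K_S)\geqslant\tfrac{2}{3}$ coming from Tian's criterion would only give $\alpha(S,L)\geqslant 1$, which is insufficient to beat $\gamma=\tfrac{9}{8}$. So citing (rather than reproving) the sharper $\tfrac{5}{6}$-estimate is essential, and this is the one place where the argument relies on a substantive external result.
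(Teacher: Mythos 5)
Your proof is correct and follows essentially the same route as the paper: both rest on the bound $\alpha(S,-K_S)\geqslant\frac{5}{6}$ from \cite{Ch08} together with the rescaling $\alpha\big(S,\tfrac{2}{3}(-K_S)\big)=\tfrac{3}{2}\,\alpha(S,-K_S)$ to contradict the standing assumption $\alpha(S,L)<\gamma$. The only difference is cosmetic: you compute $\gamma=\frac{9}{8}$ explicitly under the hypothesis, whereas the paper just uses the uniform bound $\gamma\leqslant\frac{6}{5}<\frac{5}{4}$ from \eqref{equation:gamma-1}.
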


\begin{proof}
By \cite[Theorem~1.7]{Ch08}, $\alpha(S, -K_S)\geqslant\frac{5}{6}$.
Thus, if $L\sim_{\mathbb{Q}}-\frac{2}{3}K_{S}$, then  $(S,\frac{5}{4}D)$ is log canonical,
which is impossible, because $\lambda<\gamma\leqslant\frac{6}{5}<\frac{5}{4}$ by \eqref{equation:gamma-1}.
\end{proof}

Let $C$ be a curve in the pencil $|-K_S|$ that passes through the point $P$. Then $C$ is irreducible.
Write $D=aC+\Delta$, where $a$ is a non-negative rational number,
and $\Delta$ is an effective $\mathbb{Q}$-divisor whose support does not contain the curve $C$.

\begin{lemma}
\label{lemma:degree-one-3}
One has $a<\frac{2}{3}$.
\end{lemma}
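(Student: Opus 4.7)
The plan is to extract the bound $a<\tfrac{2}{3}$ purely from positivity considerations by intersecting $\Delta$ with the ample class $L$. Using $C\sim -K_{S}$, the relation $D\sim_{\mathbb{Q}} L$, and the normalization $L^{2}=\tfrac{2}{3}(-K_{S}\cdot L)$ established at the start of the section, I would compute
$$
\Delta\cdot L\;=\;(D-aC)\cdot L\;=\;L^{2}-a\,(-K_{S}\cdot L)\;=\;\Bigl(\tfrac{2}{3}-a\Bigr)(-K_{S}\cdot L).
$$
Since $L$ is ample, $-K_{S}\cdot L>0$, so the desired inequality $a<\tfrac{2}{3}$ is equivalent to strict positivity $\Delta\cdot L>0$. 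The latter follows from ampleness of $L$ as soon as $\Delta$ is a nonzero effective $\mathbb{Q}$-divisor: each irreducible component of $\Delta$ then intersects $L$ positively and contributes with a positive coefficient.

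It remains to exclude the degenerate case $\Delta=0$. If $\Delta=0$, then $D=aC$, so $D\sim_{\mathbb{Q}} L$ forces $L\sim_{\mathbb{Q}} -aK_{S}$; plugging this into $\tfrac{-K_{S}\cdot L}{L^{2}}=\tfrac{3}{2}$ gives $1/a=\tfrac{3}{2}$, hence $a=\tfrac{2}{3}$ and $L\sim_{\mathbb{Q}} -\tfrac{2}{3}K_{S}$. This directly contradicts Lemma~\ref{lemma:degree-one-1}, so $\Delta\neq 0$ and the argument closes.

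I do not anticipate any real obstacle in this step: the whole proof is a one-line intersection computation combined with the exclusion of a single $\mathbb{Q}$-linear-equivalence class, which has already been handled in Lemma~\ref{lemma:degree-one-1}. Notably, neither the assumption that $C$ passes through $P$ nor the failure of log canonicity of $(S,\lambda D)$ at $P$ is used here; those hypotheses only enter the later steps of the proof of Theorem~\ref{theorem:degree-one}, where local multiplicities are estimated via Lemmas~\ref{lemma:Skoda} and~\ref{lemma:adjunction}.
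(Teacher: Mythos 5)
Your proof is correct and follows essentially the same route as the paper: both arguments reduce to the identity $\Delta\cdot L=\bigl(\tfrac{2}{3}-a\bigr)(-K_S\cdot L)$ coming from the normalization $\tfrac{-K_S\cdot L}{L^2}=\tfrac{3}{2}$, and both dispose of the boundary case $L\sim_{\mathbb{Q}}-\tfrac{2}{3}K_S$ via Lemma~\ref{lemma:degree-one-1}. The only difference is presentational: the paper runs the computation as a contradiction from $a\geqslant\tfrac{2}{3}$, while you argue directly from the sign of $\Delta\cdot L$.
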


\begin{proof}
Suppose that $a\geqslant\frac{2}{3}$. Then it follows from \eqref{equation:slope-condition-1} that
$$
0=L\cdot\Big(L-\frac{2}{3}\big(-K_S\big)\Big)=L\cdot\Big(D-\frac{2}{3} C\Big)=L\cdot\Bigg(\Big(a-\frac{2}{3}\Big)C+\Delta\Bigg)\geqslant\Big(a-\frac{2}{3}\Big)L\cdot C,
$$
so that $a=\frac{2}{3}$ and $L\cdot\Delta=0$, because $L\cdot C>0$ and $L\cdot\Delta\geqslant 0$.
Then $\Delta=0$, because $L$ is ample. Thus, we have $L\sim_{\mathbb{Q}}-\frac{2}{3}K_{S}$,
which is impossible by Lemma~\ref{lemma:degree-one-1}.
\end{proof}

If $C$ is smooth at $P$, then it follows from Lemma~\ref{lemma:adjunction} that
$$
\frac{1}{\lambda}<C\cdot\Delta=C\cdot (D-aC)=1-R\cdot C-a\leqslant 1-R\cdot C=1-\epsilon,
$$
which gives $\epsilon<1$ and $\gamma>\lambda>\frac{1}{1-\epsilon}>\frac{3}{3-\epsilon}$, which contradicts \eqref{equation:gamma-1}. Thus, $\mathrm{mult}_P(C)=2$.

Let $m=\mathrm{mult}_P(\Delta)$. Then $2m\leqslant C\cdot\Delta=1-\epsilon-a<1-a$.

\begin{corollary}
\label{corollary:degree-one-4}
One has $a+2m<1$.
\end{corollary}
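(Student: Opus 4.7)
The plan is essentially immediate: the desired inequality $a+2m<1$ is a direct rearrangement of the strict bound $2m<1-a$, and this bound is obtained by concatenating the three facts $2m\le C\cdot\Delta=1-\epsilon-a<1-a$ that have already been laid out in the paragraph preceding the statement. So all I would do for the proof is record this rearrangement; there is no real obstacle. The only thing worth doing is spelling out why each link of the chain is valid, so that the conclusion is not mysterious.

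First, the inequality $2m\le C\cdot\Delta$ uses that $\mathrm{mult}_P(C)=2$ (established just above by showing that $C$ cannot be smooth at $P$ via Lemma~\ref{lemma:adjunction}), that $\mathrm{mult}_P(\Delta)=m$, and that $C$ is not a component of $\Delta$ by construction. Hence the local intersection number of $C$ and $\Delta$ at $P$ is at least $\mathrm{mult}_P(C)\cdot\mathrm{mult}_P(\Delta)=2m$, and this is in turn bounded above by the global intersection $C\cdot\Delta$ since the remaining local contributions are nonnegative. Second, the identity $C\cdot\Delta=1-\epsilon-a$ uses $C\in|-K_S|$, so that $C^2=K_S^2=1$, and $C\cdot L=-K_S\cdot L=-K_S\cdot(-K_S-R)=1-\epsilon$, combined with $\Delta=D-aC$ and $D\sim_{\mathbb{Q}} L$. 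Third, the strict inequality $1-\epsilon-a<1-a$ uses $\epsilon>0$, which was already noted right after the definition $\epsilon=-K_S\cdot R$, because $R\not\sim_{\mathbb{Q}} 0$ by \eqref{equation:slope-condition-1} together with Lemma~\ref{lemma:degree-one-1}.

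Putting these three ingredients together gives $2m<1-a$ and hence $a+2m<1$, as claimed. The reason to record this as a separate corollary is presumably that the form $a+2m<1$ is the one needed for the multiplicity bookkeeping on the blow-up of $P$ later in the argument (cf.\ the formula for the discrepancy of the exceptional curve at the end of Section~\ref{section:background}), but no additional input beyond what is already on the page is required.
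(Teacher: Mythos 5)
Your proposal is correct and follows exactly the paper's own route: the corollary is an immediate rearrangement of the chain $2m\leqslant C\cdot\Delta=1-\epsilon-a<1-a$ stated in the sentence just before it, which you justify with precisely the same ingredients ($\mathrm{mult}_P(C)=2$, the computation $C\cdot\Delta=1-\epsilon-a$, and $\epsilon>0$). Nothing is missing.
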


Let $f\colon \widetilde{S}\rightarrow S$ be the blow-up at $P$, and let $F$ be its exceptional curve.
Denote by $\widetilde{C}$ and $\widetilde{\Delta}$ the proper transforms on the surface $\widetilde{S}$ of the curve $C$ and the divisor $\Delta$, respectively.
Then the log pair $(\widetilde{S}, \lambda a\widetilde{C} + \lambda\widetilde{\Delta}+(\lambda(2a+m)-1)F)$ is not log canonical
at some point $Q\in F$.
Since $a<\frac{2}{3}$ by Lemma~\ref{lemma:degree-one-3} and $a+2m<1$ by Corollary~\ref{corollary:degree-one-4}, we have $2a+m<\frac{3}{2}$.
Then $\lambda(2a+m)-1<1$, because $\lambda<\frac{6}{5}$ by \eqref{equation:gamma-1}.
We also have $\lambda(2a+m)-1>0$ by Lemma~\ref{lemma:Skoda}.

\begin{lemma}
\label{lemma:degree-one-5}
The point $Q$ is contained in the curve $\widetilde{C}$.
\end{lemma}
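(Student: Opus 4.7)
The plan is to argue by contradiction: assume $Q\notin\widetilde{C}$, so that near $Q$ the non-log-canonical pair reduces to $(\widetilde{S},\lambda\widetilde{\Delta}+(\lambda(2a+m)-1)F)$, with $F$ the only boundary component meeting $Q$ besides $\widetilde{\Delta}$. I then want to apply the adjunction inequality (Lemma~\ref{lemma:adjunction}) with $C_1=F$, which is smooth at $Q$.

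To do so, I first need to check that the coefficient of $F$ is at most $1$, i.e.\ $\lambda(2a+m)\leqslant 2$. This is already packaged in the text: the authors note $2a+m<\tfrac{3}{2}$ (using $a<\tfrac{2}{3}$ from Lemma~\ref{lemma:degree-one-3} and $a+2m<1$ from Corollary~\ref{corollary:degree-one-4}), and $\lambda<\gamma\leqslant\tfrac{6}{5}$ by \eqref{equation:gamma-1}, so $\lambda(2a+m)<\tfrac{9}{5}<2$. Adjunction therefore gives
$$
\mathrm{mult}_Q\bigl(F\cdot(\lambda a\widetilde{C}+\lambda\widetilde{\Delta})\bigr)>1.
$$
Under the assumption $Q\notin\widetilde{C}$, the $\widetilde{C}$-contribution vanishes, leaving $\lambda\cdot\mathrm{mult}_Q(F\cdot\widetilde{\Delta})>1$.

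Next I bound this local intersection number by the global one. Since $\widetilde{\Delta}$ does not contain $F$ and $F$ is smooth at $Q$, the local intersection satisfies $\mathrm{mult}_Q(F\cdot\widetilde{\Delta})\leqslant F\cdot\widetilde{\Delta}=\mathrm{mult}_P(\Delta)=m$. Combining, we obtain $\lambda m>1$.

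To finish, I just plug in the numerical bounds accumulated so far. Corollary~\ref{corollary:degree-one-4} gives $m<\tfrac{1-a}{2}\leqslant\tfrac{1}{2}$, and $\lambda<\gamma\leqslant\tfrac{6}{5}$, so
$$
\lambda m<\tfrac{6}{5}\cdot\tfrac{1}{2}=\tfrac{3}{5}<1,
$$
contradicting $\lambda m>1$. Hence $Q\in\widetilde{C}$. I do not foresee a genuine obstacle here: the argument is a routine combination of adjunction on $F$ with the numerical inequalities already established; the only delicate bookkeeping is verifying the coefficient of $F$ lies in $(0,1]$, which follows from the bounds the authors just recorded immediately before the lemma.
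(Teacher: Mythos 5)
Your argument is correct and is essentially the paper's own proof: both apply Lemma~\ref{lemma:adjunction} to $F$ at $Q$ (the coefficient bound $\lambda(2a+m)-1<1$ having been recorded just before the lemma), use $F\cdot\widetilde{\Delta}=m$ to get $m>1/\lambda>5/6$, and contradict $m\leqslant\frac{1-a}{2}\leqslant\frac{1}{2}$ from Corollary~\ref{corollary:degree-one-4}. No discrepancies to report.
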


\begin{proof}
If $Q\not\in\widetilde{C}$, then $\frac{1}{2}\geqslant\frac{1-a}{2}\geqslant m=F\cdot\widetilde{\Delta}>\frac{1}{\lambda}>\frac{5}{6}$
by Lemma~\ref{lemma:adjunction}, Corollary~\ref{corollary:degree-one-4} and \eqref{equation:gamma-1}.
\end{proof}

By Lemma~\ref{lemma:degree-one-3} and \eqref{equation:gamma-1}, we have $\lambda a<1$. Observe that $\widetilde C \cdot F =2$ and the curve $\widetilde{C}$ is smooth. Thus, applying Lemma~\ref{lemma:adjunction}, we get
$$
1<\widetilde{C}\cdot\Big(\lambda\widetilde{\Delta}+\big(\lambda(2a+m)-1\big)F\Big)=\lambda(C\cdot\Delta-2m)+2\lambda(2a+m)-2=\lambda(1-\epsilon +3a)-2.
$$
Now using $\epsilon>0$ and $a<\frac{2}{3}$, we obtain
$3<\lambda(1+3a-\epsilon)<\lambda\left(3-\epsilon\right)<3$
by \eqref{equation:gamma-1}.

The obtained contradiction completes the proof of Theorem~\ref{theorem:degree-one}.

\section{Del Pezzo surfaces of degree two}
\label{section:degree-two}

Let $S$ be a smooth del Pezzo surface such that $K_S^2=2$ and $L$ be an ample $\mathbb{Q}$-divisor on the surface $S$.
In this section we prove the following result:

\begin{theorem}
\label{theorem:degree-two}
If $-K_S-\frac{2}{3}\frac{-K_S\cdot L}{L^2}L$ is nef, then $\alpha(S,L)>\frac{2}{3}\frac{-K_S\cdot L}{L^2}$.
\end{theorem}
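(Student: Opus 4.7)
The plan is to parallel the argument of Theorem~\ref{theorem:degree-one}, modified to account for the fact that anticanonical curves on a del Pezzo of degree two satisfy $C^2=2$. Since the target inequality is invariant under rescaling of $L$, we rescale so that $\frac{-K_S\cdot L}{L^2}=\frac{3}{2}$ and must prove $\alpha(S,L)>1$. Set $R=-K_S-L$, which is nef by hypothesis, and $\epsilon=-K_S\cdot R\geqslant 0$; the degenerate case $\epsilon=0$ corresponds to $L\sim_{\mathbb Q}-\frac{2}{3}K_S$ and is handled by the known bound $\alpha(S,-K_S)>\frac{2}{3}$ on smooth degree-two del Pezzos. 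In the remaining cases we fix a piecewise target $\gamma=\gamma(\epsilon)>1$ analogous to \eqref{equation:gamma-1}, and argue by \emph{reductio ad absurdum}: assume a rational $\lambda<\gamma$ and an effective $\mathbb Q$-divisor $D\sim_{\mathbb Q} L$ with $(S,\lambda D)$ not log canonical at some point $P$.

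The initial reductions mirror Lemmas~\ref{lemma:degree-one-1} and~\ref{lemma:degree-one-3}: the case $L\sim_{\mathbb Q}-\frac{2}{3}K_S$ is excluded via the $\alpha$-invariant bound above, and, since $|-K_S|$ is a net on $S$, its members containing $P$ form a pencil. Among them we pick $C$ minimizing $\mathrm{mult}_P(C)$ and split $D=aC+\Delta$ with $C\not\subset\mathrm{Supp}(\Delta)$, invoking Remark~\ref{remark:convexity} if needed. The identity $(L-\frac{2}{3}(-K_S))\cdot L=0$, together with ampleness of $L$ and the previous exclusion, forces $a<\frac{2}{3}$.

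If $C$ can be chosen smooth at $P$ --- the generic situation, since the pencil through $P$ has at most finitely many members singular at $P$ --- Lemma~\ref{lemma:adjunction} yields
$$
\frac{1}{\lambda}<C\cdot\Delta=C\cdot L-2a=2-\epsilon-2a,
$$
and a $\gamma$ compatible with $a<\frac{2}{3}$ and $\epsilon\geqslant 0$ produces the needed contradiction. Otherwise $\mathrm{mult}_P(C)=2$: either $C$ is irreducible with a node or cusp at $P$ (the arithmetic genus bound $p_a(C)=1$ precludes $\mathrm{mult}_P(C)\geqslant 3$), or $C=E_1+E_2$ where $E_1$ and $E_2$ are $(-1)$-curves through $P$ with $E_1\cdot E_2=2$. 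Blow up $\sigma\colon\widetilde S\to S$ at $P$ with exceptional curve $F$, set $m=\mathrm{mult}_P(\Delta)$, and record the failure of log canonicity of
$$
\bigl(\widetilde S,\;\lambda a\widetilde C+\lambda\widetilde\Delta+(\lambda(2a+m)-1)F\bigr)
$$
at some $Q\in F$. Using $2m\leqslant C\cdot\Delta=2-\epsilon-2a$ and splitting on whether $Q\in\widetilde C$, one applies Lemma~\ref{lemma:adjunction} or Theorem~\ref{theorem:Trento} on $\widetilde S$ to extract a numerical contradiction with the choice of $\gamma$.

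The main obstacle is the configuration in which every member of the anticanonical pencil through $P$ is singular at $P$, so $C$ is forced to be reducible of the form $E_1+E_2$. There $\widetilde C$ has two smooth branches meeting $F$ (either at distinct points, if $E_1$ and $E_2$ have distinct tangents at $P$, or at a single point, if the tangents coincide), and the naive single blow-up argument does not close all subcases. In that setting one instead invokes Theorem~\ref{theorem:Trento} with $C_1$ and $C_2$ being the proper transforms of $E_1$ and $E_2$, and may require a second blow-up to resolve infinitely near points where log canonicity still fails. Tracking the many configurations of $(-1)$-curves through $P$ on a degree-two del Pezzo, together with the interplay between the coefficients $a$ and $m$ and the pairings of $\Delta$ with the components of $C$, accounts for the length and is where the bulk of Section~\ref{section:degree-two} will lie.
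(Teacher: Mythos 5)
Your overall framework (rescale so that $\frac{-K_S\cdot L}{L^2}=\frac{3}{2}$, set $R=-K_S-L$ and $\epsilon=-K_S\cdot R$, fix a piecewise $\gamma>1$, and argue by contradiction via decompositions along anticanonical curves, blow-ups and Lemma~\ref{lemma:adjunction}/Theorem~\ref{theorem:Trento}) matches the paper, but there is a genuine gap at the step you label the ``generic situation.'' For a curve $C\in|-K_S|$ smooth at $P$ you invoke Lemma~\ref{lemma:adjunction} on $S$ to get $\frac{1}{\lambda}<C\cdot\Delta=2-\epsilon-2a$, and assert that a suitable $\gamma$ yields a contradiction. It does not: since $K_S^2=2$, the right-hand side can be arbitrarily close to $2$ (take $a$ and $\epsilon$ small), so the inequality only gives $\lambda>\frac{1}{2-\epsilon-2a}$, which is compatible with any $\gamma>1$. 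This is exactly the point where degree two differs from degree one, where $C\cdot D=1-\epsilon<\frac{1}{\lambda}$ makes the one-step adjunction close the argument. Choosing $C$ to minimize $\mathrm{mult}_P(C)$ does not help; nor does a single blow-up at $P$, since one only knows $\mathrm{mult}_P(D)\leqslant 2-\epsilon$, which is too weak to contradict $\mathrm{mult}_P(D)>\frac{1}{\lambda}$. So the case you treat as generic and easy is in fact the one your outline cannot close, while the reducible/tangential configurations you flag as ``the main obstacle'' are handled in the paper by a (lengthy but routine) two-blow-up analysis with Theorem~\ref{theorem:Trento}.

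The missing idea is to use the freedom in the \emph{net} $|-K_S|$ \emph{after} the first blow-up: the paper first shows (via the anticanonical double cover $\pi\colon S\to\mathbb{P}^2$ branched over a quartic $C_4$) that $\pi(P)\notin C_4$, and then chooses the member $C$ of the anticanonical pencil through $P$ whose proper transform passes through the bad point $Q\in F$. Applying Lemma~\ref{lemma:adjunction} upstairs to $\widetilde{C}$ (or, after proving $C$ must be reducible, to the component $\widetilde{L}_1$ through $Q$) picks up the extra contribution of the $F$-term and yields $\frac{2}{\lambda}<2-\epsilon$, which genuinely contradicts $\gamma\leqslant\frac{12}{12-\epsilon}$. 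Without selecting $C$ adapted to $Q$, the factor $2$ on the left is lost and no contradiction arises. A minor further slip: under the normalization, $\epsilon=0$ would force $L\equiv -K_S$ and is automatically excluded, while $L\sim_{\mathbb{Q}}-\frac{2}{3}K_S$ gives $\epsilon=\frac{2}{3}$, not $\epsilon=0$; the latter case is ruled out by the bound $\alpha(S,-K_S)\geqslant\frac{3}{4}$, as you intend, but it is not the $\epsilon=0$ case.
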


As in the proof of Theorem \ref{theorem:degree-one}, we may assume that $\frac{-K_S\cdot L}{L^2}=\frac{3}{2}$. Then
\begin{equation}
\label{equation:fixed-slope-condition-2}
\Big(L-\frac{2}{3}\big(-K_S\big)\Big)\cdot L=0.
\end{equation}
Suppose that $-K_{S}-L$ is nef. To prove  Theorem~\ref{theorem:degree-two}, we have to show that $\alpha(S,L)>1$.

Let $R=-K_S-L$ and $\epsilon=-K_{S}\cdot R$. Then $\epsilon>0$, because $R\not\sim_{\mathbb{Q}} 0$ by \eqref{equation:fixed-slope-condition-2}.
Let
\begin{equation}
\label{equation:gamma-2}
\gamma=\left\{%
\aligned
&\frac{12}{11}\ \mathrm{if}\ \epsilon\geqslant 1,\\%
&\frac{12}{12-\epsilon}\ \mathrm{if}\ \epsilon<1.\\%
\endaligned\right.%
\end{equation}
Then $\gamma>1$. We will prove that $\alpha(S,L)\geqslant\gamma$ by \emph{reductio ad absurdum}.

Suppose that $\alpha(S,L)<\gamma$. Then there exist a positive rational number $\lambda<\gamma$ and an effective $\mathbb{Q}$-divisor $D$ on $S$ such that $D\sim_{\mathbb{Q}} L$, and $(S,\lambda D)$ is not log canonical at some point~$P\in S$.

\begin{lemma}
\label{lemma:degree-two-1}
One has $L\not\sim_{\mathbb{Q}}-\frac{2}{3}K_{S}$.
\end{lemma}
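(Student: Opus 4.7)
The plan is to mirror verbatim the proof of Lemma~\ref{lemma:degree-one-1}: assume for contradiction that $L\sim_{\mathbb{Q}}-\frac{2}{3}K_{S}$, and derive a contradiction from the inequality $\lambda<\gamma\leqslant\frac{12}{11}$ supplied by~\eqref{equation:gamma-2}.

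The key input is the classical estimate for the anticanonical $\alpha$-invariant of smooth del Pezzo surfaces of degree two, namely $\alpha(S,-K_{S})\geqslant\frac{3}{4}$ \cite[Theorem~1.7]{Ch08} (with equality precisely when $|-K_{S}|$ contains a tacnodal curve). Under the hypothesis $L\sim_{\mathbb{Q}}\frac{2}{3}(-K_{S})$, the $\alpha$-invariant scales multiplicatively under rescaling of the polarization, so
$$
\alpha(S,L)\;\geqslant\;\tfrac{3}{2}\cdot\tfrac{3}{4}\;=\;\tfrac{9}{8}.
$$
In particular, for the effective $\mathbb{Q}$-divisor $D\sim_{\mathbb{Q}}L$ under consideration, the pair $\left(S,\tfrac{9}{8}D\right)$ is log canonical everywhere.

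To close the argument I would observe that $\lambda<\gamma\leqslant\tfrac{12}{11}<\tfrac{9}{8}$, so $(S,\lambda D)$ is also log canonical, contradicting the standing assumption that it fails to be log canonical at $P$. The numerical inequalities to check are $\tfrac{12}{11}<\tfrac{9}{8}$ and the fact that the two branches of~\eqref{equation:gamma-2} both give $\gamma\leqslant\tfrac{12}{11}$; both are trivial.

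I do not anticipate any real obstacle: the lemma is a direct transcription of its degree-one analogue, with the bound $\alpha(S,-K_S)\geqslant\tfrac{5}{6}$ used in Lemma~\ref{lemma:degree-one-1} replaced by the (uniform in $S$) bound $\alpha(S,-K_S)\geqslant\tfrac{3}{4}$ available in degree two. The mild ``cost'' of a smaller anticanonical $\alpha$-invariant is absorbed by the correspondingly weaker upper bound $\tfrac{12}{11}$ on $\gamma$ in~\eqref{equation:gamma-2}, compared with the bound $\tfrac{6}{5}$ in~\eqref{equation:gamma-1}.
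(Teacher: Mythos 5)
Your proof is correct and is essentially identical to the paper's: both invoke $\alpha(S,-K_S)\geqslant\frac{3}{4}$ from \cite[Theorem~1.7]{Ch08}, rescale to conclude that $(S,\frac{9}{8}D)$ is log canonical when $L\sim_{\mathbb{Q}}-\frac{2}{3}K_S$, and contradict this with $\lambda<\gamma\leqslant\frac{12}{11}<\frac{9}{8}$ from \eqref{equation:gamma-2}. No issues.
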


\begin{proof}
By \cite[Theorem~1.7]{Ch08} (c.f. \cite{MarGar}), $\alpha(S, -K_S)\geqslant\frac{3}{4}$.
Thus, if $L\sim_{\mathbb{Q}}-\frac{2}{3}K_{S}$, then $(S,\frac{9}{8}D)$ is log canonical,
which is impossible, because $\lambda<\gamma\leqslant\frac{12}{11}<\frac{9}{8}$ by \eqref{equation:gamma-2}.
\end{proof}

\begin{lemma}
\label{lemma:degree-two-2}
Suppose that $D=\sum_{i=1}^k a_iC_i+\Delta$, where $a_1,\ldots,a_k$ are non-negative rational numbers,
$C_1,\ldots,C_k$ are irreducible curves on the surface $S$ such that 
$$
\sum_{i=1}^kC_i\sim -K_{S},
$$
and $\Delta$ is an effective $\mathbb{Q}$-divisor on the surface $S$ whose support does not contain any curve among the curves $C_1,\ldots,C_k$.
Then $a_r<\frac{2}{3}$ for some $r$.
\end{lemma}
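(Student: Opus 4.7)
The plan is to mimic the argument used for Lemma~\ref{lemma:degree-one-3}, now with the single anticanonical curve replaced by the reducible effective cycle $\sum_{i=1}^k C_i\sim_{\mathbb{Q}}-K_S$. I will argue by contradiction, assuming that $a_i\geqslant\frac{2}{3}$ for every index $i$.

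First, I would plug $D=\sum a_iC_i+\Delta$ into the numerical identity \eqref{equation:fixed-slope-condition-2}. This gives
\[
0=L\cdot\Big(L-\tfrac{2}{3}(-K_S)\Big)=L\cdot\Big(D-\tfrac{2}{3}\sum_{i=1}^k C_i\Big)=\sum_{i=1}^k\Big(a_i-\tfrac{2}{3}\Big)L\cdot C_i+L\cdot\Delta.
\]
Since $L$ is ample, each $L\cdot C_i$ is strictly positive and $L\cdot\Delta\geqslant 0$. Under the assumption $a_i\geqslant\frac{2}{3}$ for all $i$, every summand is non-negative, so each must vanish. This forces $a_i=\frac{2}{3}$ for every $i$ and $L\cdot\Delta=0$. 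Ampleness of $L$ then yields $\Delta=0$, whence $D=\frac{2}{3}\sum_{i=1}^k C_i\sim_{\mathbb{Q}}-\frac{2}{3}K_S$, and therefore $L\sim_{\mathbb{Q}}-\frac{2}{3}K_S$, contradicting Lemma~\ref{lemma:degree-two-1}.

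There is essentially no obstacle here: the statement is a formal numerical consequence of the identity $L\cdot(L-\frac{2}{3}(-K_S))=0$, the ampleness of $L$, and the already-established Lemma~\ref{lemma:degree-two-1}. The only subtlety to keep in mind is that one must take $\Delta$ to be the part of $D$ whose support contains \emph{no} $C_i$, so that all the intersection numbers $L\cdot C_i$ and $L\cdot\Delta$ split additively without hidden cancellation. Everything else is a direct transcription of the degree-one argument to the reducible anticanonical configuration.
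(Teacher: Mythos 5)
Your argument is correct and is essentially identical to the paper's proof: both substitute $D=\sum a_iC_i+\Delta$ into the identity $L\cdot(L-\frac{2}{3}(-K_S))=0$, use ampleness of $L$ to force $a_i=\frac{2}{3}$ and $\Delta=0$, and conclude via the contradiction with Lemma~\ref{lemma:degree-two-1}. No issues.
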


\begin{proof}
Suppose that $a_1\geqslant\frac{2}{3},\ldots,a_k\geqslant \frac{2}{3}$.
Then it follows from \eqref{equation:fixed-slope-condition-2} that
$$
0=L\cdot\Big(L-\frac{2}{3}\big(-K_S\big)\Big)=\sum_{i=1}^k\Big(a_i-\frac{2}{3}\Big)L\cdot C_i+L\cdot\Delta\geqslant \sum_{i=1}^k\Big(a_i-\frac{2}{3}\Big)L\cdot C_i\geqslant 0,
$$
which implies that $a_1=\cdots =a_k=\frac{2}{3}$ and $L\cdot\Delta=0$. Then $\Delta=0$, since the divisor $L$ is ample and $L\sim_{\mathbb{Q}}-\frac{2}{3}K_{S}$, which contradicts Lemma~\ref{lemma:degree-two-1}.
\end{proof}

Let $f\colon \widetilde{S}\rightarrow S$ be a blow-up of the point $P$, let $F$ be its exceptional curve,
and let $\widetilde{D}$ be the proper transform on $\widetilde{S}$ of the divisor $D$.
Then $(\widetilde{S}, \lambda\widetilde{D}+(\lambda\mathrm{mult}_P(D)-1)F)$ is not log canonical at some point $Q\in F$.
Note that $\lambda\mathrm{mult}_P(D)>1$ by Lemma~\ref{lemma:Skoda}.

\begin{lemma}
\label{lemma:degree-two-3a}
One has $\mathrm{mult}_P(D)\leqslant 2-\epsilon$ and $\lambda\mathrm{mult}_P(D)<2$.
\end{lemma}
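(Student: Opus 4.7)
The plan is to bound $\mathrm{mult}_P(D)$ by intersecting $D$ with a suitable member of $|-K_S|$ passing through $P$, and then to deduce both inequalities via a short case split on $\epsilon$. Since $D \sim_{\mathbb Q} L$, $L = -K_S - R$, and $K_S^2 = 2$, I first record
\[
(-K_S)\cdot D = (-K_S)\cdot L = K_S^2 - (-K_S)\cdot R = 2 - \epsilon.
\]

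Next, I would choose $T \in |-K_S|$ passing through $P$ to be a general member of the pencil $|-K_S|_P$ of anticanonical curves through $P$. Because the anticanonical system of a smooth del Pezzo surface of degree $2$ realizes $S$ as a double cover of $\mathbb P^2$, a general member of $|-K_S|_P$ is the preimage of a general line through the image of $P$, hence a smooth irreducible elliptic curve; in particular a general $T$ is irreducible and smooth at $P$, and since $D$ has only finitely many irreducible components, a general $T$ is also not a component of $D$. For such a $T$ one has
\[
\mathrm{mult}_P(D) \leq D\cdot T = 2 - \epsilon,
\]
which is the first claimed inequality. (If instead one were forced to take $T$ inside the support of $D$, Lemma~\ref{lemma:degree-two-2} with $k=1$ and $C_1 = T$ would bound its coefficient below $2/3$ and a short bookkeeping would recover the same bound.)

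For the second inequality I combine $\mathrm{mult}_P(D) \leq 2 - \epsilon$ with $\lambda < \gamma$ and the explicit form of $\gamma$ in \eqref{equation:gamma-2}. When $\epsilon \geq 1$, we have $\mathrm{mult}_P(D) \leq 1$ and $\lambda < 12/11$, so $\lambda\,\mathrm{mult}_P(D) < 12/11 < 2$. When $\epsilon < 1$,
\[
\lambda\,\mathrm{mult}_P(D) < \frac{12(2-\epsilon)}{12-\epsilon},
\]
and the inequality $\frac{12(2-\epsilon)}{12-\epsilon} < 2$ simplifies to $-10\epsilon < 0$, which holds since $\epsilon > 0$.

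The only real subtlety is the selection of $T$: because $|-K_S|_P$ is only a one-parameter family rather than a higher-dimensional linear system, one has to verify that general members are simultaneously irreducible, smooth at $P$, and not components of $D$. Irreducibility and smoothness at $P$ follow from the geometry of the anticanonical double cover (general lines through the image of $P$ meet the branch quartic transversally), while the third property is immediate from the finiteness of $\mathrm{Supp}(D)$. Everything else is a direct computation.
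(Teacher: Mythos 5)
Your proof is correct and takes essentially the same approach as the paper: intersect $D$ with a general curve of the pencil $|-K_S|$ through $P$ to obtain $\mathrm{mult}_P(D)\leqslant D\cdot C=2-\epsilon$, then compare $\lambda(2-\epsilon)$ with $2$ using \eqref{equation:gamma-2}. The paper's version is simply terser, leaving the genericity argument (irreducibility of the general member via the double cover, and non-containment in $\mathrm{Supp}(D)$) and the elementary arithmetic in the two cases of \eqref{equation:gamma-2} implicit.
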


\begin{proof}
Let $C$ be a general curve in $|-K_S|$ that passes through $P$. Then $\mathrm{mult}_P(D)\leqslant D\cdot C=2-\epsilon$,
which implies that $\lambda\mathrm{mult}_P(D)<2$ by \eqref{equation:gamma-2}.
\end{proof}

Let $g\colon\widehat{S}\rightarrow \widetilde{S}$ be the blow-up at the point $Q$, and let $G$ be its exceptional curve.
Denote by $\widehat{F}$ and $\widehat{D}$ the proper transforms on $\widehat{S}$ of the curve $F$ and the divisor $\widetilde{D}$, respectively.
Then the log pair
$(\widehat{S},\lambda\widehat{D}+(\lambda\mathrm{mult}_P(D)-1)\widehat{F}+(\lambda(\mathrm{mult}_P(D)+\mathrm{mult}_Q(\widetilde{D}))-2)G)$
is not log canonical at some point $O\in G$. Note that it follows from Lemma~\ref{lemma:Skoda} that
$\lambda\mathrm{mult}_P(D)+\lambda\mathrm{mult}_Q(\widetilde{D})>2$.

The linear system $|-K_{S}|$ gives a double cover $\pi\colon S\rightarrow \mathbb P^2$
branched over a smooth quartic curve $C_4\subset\mathbb P^2$.
Thus, if $\pi(P)\in C_4$, then $|-K_S|$ contains a unique curve that is singular at $P$.

\begin{lemma}
\label{lemma:degree-two-3}
Suppose that $\pi(P)$ is contained in the curve $C_4$.
Let $T$ be the unique curve in the linear system $|-K_S|$ that is singular at the point $P$.
Then $T$ is reducible.
\end{lemma}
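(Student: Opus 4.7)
The plan is proof by contradiction: assume $T$ is irreducible and derive a bound on $\lambda$ that violates $\lambda<\gamma$. Since $T\sim -K_S$ has $T^2=2$ and arithmetic genus one, irreducibility of $T$ forces $\mathrm{mult}_P T=2$, with the singularity at $P$ either a node or a cusp. Write $D=aT+\Delta$, where $\Delta$ is effective and does not contain $T$; Lemma~\ref{lemma:degree-two-2} (with $k=1$) gives $a<\tfrac{2}{3}$. From $T\cdot D=-K_S\cdot L=2-\epsilon$ and $T^2=2$ one has $T\cdot\Delta=2-\epsilon-2a$, and the local intersection inequality $\mathrm{mult}_P(T\cdot\Delta)\geq\mathrm{mult}_P(T)\cdot\mathrm{mult}_P(\Delta)=2m$, where $m:=\mathrm{mult}_P\Delta$, yields
\[
m\leq 1-\tfrac{\epsilon}{2}-a,\qquad \mathrm{mult}_P(D)=2a+m\leq a+1-\tfrac{\epsilon}{2}<\tfrac{5}{3}.
\]

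Next, on the blow-up $f:\widetilde S\to S$ at $P$ with exceptional $F$, the pair $(\widetilde S,\,\lambda a\widetilde T+\lambda\widetilde\Delta+(\lambda(2a+m)-1)F)$ is not log canonical at some $Q\in F$, and the bounds above together with $\lambda<\gamma\leq\tfrac{12}{11}$ keep all three coefficients strictly below $1$. If $Q\notin\widetilde T$, I would apply Lemma~\ref{lemma:adjunction} with $F$ and use $F\cdot\widetilde\Delta=m$ to deduce $\lambda(a+m)>1$; since $a+m\leq 1-\epsilon/2$ and a direct check of \eqref{equation:gamma-2} shows $\gamma(1-\epsilon/2)<1$ whenever $\epsilon>0$, this contradicts $\lambda<\gamma$. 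If $Q\in\widetilde T$ and $T$ is nodal at $P$, then $\widetilde T$ and $F$ are smooth at $Q$ and meet transversely, so after verifying $\lambda\,\mathrm{mult}_Q\widetilde\Delta\leq 1$ I would apply Theorem~\ref{theorem:Trento}. Its first alternative, combined with $\widetilde T\cdot\widetilde\Delta=T\cdot\Delta-2m=2-\epsilon-2a-2m$, simplifies to $\lambda(2+2a-\epsilon)>4$, forcing $\lambda>\tfrac{6}{5}>\tfrac{12}{11}\geq\gamma$; its second alternative rearranges to $\lambda(2a+m)>2$, contradicting $\lambda(2a+m)<\gamma\cdot\tfrac{5}{3}<2$.

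The main obstacle is the cuspidal case, in which $\widetilde T$ and $F$ meet tangentially at $Q$ instead of transversely, so Theorem~\ref{theorem:Trento} cannot be invoked directly. I would handle this by the second blow-up $g:\widehat S\to\widetilde S$ at $Q$ already set up in the paragraph preceding the lemma: on $\widehat S$ the strict transforms of $\widetilde T$, $F$, and the new exceptional $G$ form smooth curves with transverse pairwise intersections, the non-log-canonical point moves to $G$, and I re-run the Lemma~\ref{lemma:adjunction}/Theorem~\ref{theorem:Trento} analysis with intersection numbers sharpened by the order-two tangency at $Q$. The auxiliary hypothesis $\lambda\,\mathrm{mult}_Q\widetilde\Delta\leq 1$ needed for Theorem~\ref{theorem:Trento} also needs checking; if it fails then $\mathrm{mult}_Q\widetilde\Delta>\tfrac{11}{12}$, and the bound $\mathrm{mult}_Q\widetilde\Delta\leq m\leq 1-\tfrac{\epsilon}{2}-a$ pins $(a,\epsilon)$ to the narrow range $a+\epsilon/2<\tfrac{1}{12}$, where the same refined intersection inequalities close the argument.
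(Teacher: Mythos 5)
Your setup and the easy cases are essentially the paper's: the decomposition $D=aT+\Delta$ with $a<\frac23$ from Lemma~\ref{lemma:degree-two-2}, the bound $2m\leqslant T\cdot\Delta=2-\epsilon-2a$, the exclusion of $Q\notin\widetilde T$ (the paper gets $m=F\cdot\widetilde\Delta>\frac1\lambda$ directly; your $\lambda(a+m)>1$ is a harmless weakening and the contradiction with $\gamma(1-\epsilon/2)<1$ still closes it), and the nodal case (the paper uses only Lemma~\ref{lemma:adjunction} on $\widetilde T$, getting $2-\epsilon>\frac2\lambda$; your route through Theorem~\ref{theorem:Trento} also works, since both alternatives lead to $\lambda>\frac65$ or $\lambda(2a+m)>2$, each impossible).

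The genuine gap is the cuspidal case, which you correctly identify as the main obstacle but then only sketch. Two things are missing. First, after the second blow-up the curves $\widehat T$, $\widehat F$ and $G$ are \emph{concurrent}: $\widetilde T$ and $F$ are tangent at $Q$, so their strict transforms meet $G$ at one and the same point. Describing the configuration as ``transverse pairwise intersections'' hides exactly the difficulty, namely that the non-log-canonical point $O$ must first be pinned to the triple point $\widehat T\cap\widehat F\cap G$ (the paper rules out all other points of $G$ by adjunction along $G$, using $\widetilde m\leqslant m\leqslant 1-\frac\epsilon2<\frac1\lambda$, after checking that the coefficient $\lambda(3a+m+\widetilde m)-2$ of $G$ lies in $(0,1]$). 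Second, at that triple point the decisive step is not a re-run of Theorem~\ref{theorem:Trento} (which would require absorbing one of the three boundary curves into the multiplicity-bounded part) but a single application of Lemma~\ref{lemma:adjunction} to $\widehat T$ against the full boundary $\lambda\widehat\Delta+(\lambda(2a+m)-1)\widehat F+(\lambda(3a+m+\widetilde m)-2)G$, whose total intersection with $\widehat T$ computes to $\lambda(2+3a-\epsilon)-3$; the resulting inequality $\lambda(2+3a-\epsilon)-3>1$ together with $a<\frac23$ gives $\lambda>\frac{4}{4-\epsilon}$, contradicting \eqref{equation:gamma-2}. Your proposal never produces this (or any) closing inequality for the cusp, so as written the proof is incomplete precisely where the lemma is hardest. (Your worry about $\lambda\,\mathrm{mult}_Q\widetilde\Delta\leqslant1$ is moot: $\lambda\widetilde m\leqslant\lambda m\leqslant\gamma(1-\frac\epsilon2)<1$ always, so the fallback range you describe never occurs.)
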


\begin{proof}
Suppose that $T$ is irreducible. 
Write $D=aT+\Delta$, where $a$ is a non-negative rational number,
and $\Delta$ is an effective $\mathbb{Q}$-divisor whose support does not contain $T$.
By Lemma \ref{lemma:degree-two-2}, we have $a<\frac{2}{3}$. Let $m=\mathrm{mult}_P(\Delta)$. Then
\begin{equation}
\label{equation:deg2-bound1}
2m\leqslant T\cdot\Delta=2-\epsilon-2a\leqslant 2-\epsilon.
\end{equation}
Denote by $\widetilde{T}$ and $\widetilde{\Delta}$ the proper transforms on $\widetilde{S}$ of the curve $T$ and the divisor $\Delta$, respectively.
If $Q\not\in\widetilde{T}$, then Lemma~\ref{lemma:adjunction} gives
$\frac{1}{\lambda}<F\cdot\widetilde{\Delta}=m\leqslant 1-\frac{\epsilon}{2}$,
which contradicts \eqref{equation:gamma-2}. Then $Q\in \widetilde{T}$.

Since $T$ is irreducible, it has either a nodal point at $P$ or a cuspidal point.
In particular, the curve $\widetilde{T}$ is smooth.
If $T$ has a nodal point at $P$, then Lemma~\ref{lemma:adjunction} gives
\begin{equation}
2-\epsilon-2a-2m=\widetilde{T}\cdot\widetilde{\Delta}\geqslant\mathrm{mult}_{Q}\Big(\widetilde{T}\cdot\widetilde{\Delta}\Big)>\frac{2}{\lambda}-2a-m,
\label{equation:deg2-bound1-2}
\end{equation}
so that $2-\epsilon\geqslant 2-\epsilon-m>\frac{2}{\lambda}$, which contradicts \eqref{equation:gamma-2}.
Thus, $T$ has a cuspidal singularity at $P$.

Denote by $\widehat{T}$ and $\widehat{\Delta}$ the proper transforms
on $\widehat{S}$ of the curve $\widetilde{T}$ and the divisor $\widetilde{\Delta}$, respectively.
Let $\widetilde{m}=\mathrm{mult}_{Q}(\widetilde{\Delta})$.
Then
$(\widehat{S},\lambda a\widehat{T}+ \lambda\widehat{\Delta}+(\lambda(2a+m)-1)\widehat{F}+(\lambda(3a+m+\widetilde{m})-2)G)$
is not log canonical at $O$.
Using Lemma~\ref{lemma:degree-two-3a}, \eqref{equation:gamma-2} and \eqref{equation:deg2-bound1-2}, we get $\lambda(3a+m+\widetilde{m})\leqslant\lambda(\frac{8}{3}-\epsilon)<3$,
since $a<\frac{2}{3}$.

If $O\ne\widehat{T}\cap\widehat{F}\cap G$, then 
$1-\frac{\epsilon}{2}<\frac{1}{\lambda}<G\cdot\widehat{\Delta}=\widetilde{m}\leqslant m$ by Lemma~\ref{lemma:adjunction} and \eqref{equation:gamma-2},
which is absurd, because $m\leqslant 1-\frac{\epsilon}{2}$ by \eqref{equation:deg2-bound1}.
Thus, we have $O=\widehat{T}\cap\widehat{F}\cap G$. Then
$$
1<\widehat{T}\cdot\Big(\lambda\widehat{\Delta}+\big(\lambda(2a+m)-1\big)F+\big(\lambda(3a+m+\widetilde{m})-2\big)G\Big)=\lambda(2+3a-\epsilon)-3
$$
by Lemma~\ref{lemma:adjunction}. Since $a<\frac{2}{3}$, this gives
$1<\lambda(4-\epsilon)-3$,
which is impossible by \eqref{equation:gamma-2}.
\end{proof}

\begin{lemma}
\label{lemma:degree-two-4}
The point $\pi(P)$ is not contained in the curve $C_4$.
\end{lemma}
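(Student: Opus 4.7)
Proof plan.

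I would argue by contradiction: assume $\pi(P) \in C_4$. By Lemma~\ref{lemma:degree-two-3} the unique singular member $T \in |-K_S|$ through $P$ is reducible. Since $T^2 = 2$ and the components of $T$ must be $(-1)$-curves, we have $T = E_1 + E_2$ with $E_1 \cdot E_2 = 2$. For $T$ singular at $P$, two geometric sub-configurations arise: either $E_1, E_2$ meet transversally at $P$ (with a second intersection point elsewhere), or they are tangent at $P$. I would handle these separately.

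Write $D = a_1 E_1 + a_2 E_2 + \Delta$ with $\Delta \geq 0$ not supported on either $E_i$. Lemma~\ref{lemma:degree-two-2} applied to $E_1 + E_2 \sim_{\mathbb{Q}} -K_S$ yields $a_i < 2/3$ for some $i$; after relabelling take $a_1 < 2/3$. Setting $r_i = E_i \cdot R \geq 0$ (so $r_1 + r_2 = \epsilon$), the non-negativity of $E_i \cdot \Delta$ gives
\[
a_2 \leq \frac{1 + a_1 - r_1}{2} < \frac{5}{6},
\]
hence $\lambda a_1, \lambda a_2 < 1$ and Lemma~\ref{lemma:adjunction} is available on either component. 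In the transversal sub-case, $\mathrm{mult}_P(E_1 \cdot E_2) = 1$, and Lemma~\ref{lemma:adjunction} applied to $E_1$ yields
\[
1 < \lambda a_2 + \lambda\,\mathrm{mult}_P(E_1 \cdot \Delta) \leq \lambda(1 - r_1 + a_1 - a_2),
\]
and symmetrically $\lambda(1 - r_2 + a_2 - a_1) > 1$. Summing produces $\lambda(2 - \epsilon) > 2$, and a direct comparison shows $2/(2 - \epsilon) \geq \gamma$ for all $\epsilon > 0$, contradicting $\lambda < \gamma$.

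The tangent sub-case is the main obstacle. Now $\mathrm{mult}_P(E_1 \cdot E_2) = 2$, so Lemma~\ref{lemma:adjunction} yields only the weaker $\lambda(1 - r_i + a_i) > 1$. I would pass to the blow-up $f\colon \widetilde S \to S$ at $P$: the tangency forces $\widetilde E_1$ and $\widetilde E_2$ to meet the exceptional divisor $F$ at a single common point $Q_0$, transversal to $F$ and to each other. I then split on the position of the non-log-canonical point $Q \in F$. If $Q \neq Q_0$, neither $\widetilde E_i$ contains $Q$, and Lemma~\ref{lemma:adjunction} applied to $F$ alone produces $m := \mathrm{mult}_P(\Delta) > 1/\lambda$; this, combined with $a_1 + a_2 + m \leq 2 - \epsilon$ (Lemma~\ref{lemma:degree-two-3a}) and the two adjunction bounds above, forces $\lambda > 3/(4 - 2\epsilon)$, which exceeds $\gamma$ once $\epsilon$ is away from $1/2$. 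If $Q = Q_0$, I would apply Lemma~\ref{lemma:adjunction} or Theorem~\ref{theorem:Trento} at $Q_0$ to the transversal pair $(\widetilde E_1, F)$, using the intersection numbers $\widetilde E_i \cdot \widetilde\Delta = E_i \cdot \Delta - m$ and $F \cdot \widetilde\Delta = m$ together with the global bound $a_1 + a_2 + m \leq 2 - \epsilon$ to turn each alternative into a linear inequality in $a_1, a_2, m, \epsilon$; in the narrow range of $\epsilon$ where the previous subargument fails, I expect the second blow-up $g\colon \widehat S \to \widetilde S$ at $Q$ (already set up in the text) will be needed to extract the final contradiction. The chief difficulty is orchestrating all these estimates uniformly over the admissible range $\epsilon \in [1/2, 2)$, mirroring the multi-step configuration analysis already carried out in Lemma~\ref{lemma:degree-two-3}.
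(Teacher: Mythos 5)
Your overall architecture matches the paper's: decompose $D=a_1L_1+a_2L_2+\Delta$ along the reducible anticanonical curve singular at $P$ supplied by Lemma~\ref{lemma:degree-two-3}, normalize $a_1<\frac{2}{3}$ via Lemma~\ref{lemma:degree-two-2}, bound $a_2<\frac{5}{6}$, and chase the non-log-canonical point through blow-ups. Your disposal of the transversal sub-case is correct and in fact more direct than the paper's (which instead shows $Q$ must lie on both $\widetilde L_1$ and $\widetilde L_2$, forcing tangency): summing the two adjunction inequalities gives $\lambda(2-\epsilon)>2$, and $\frac{2}{2-\epsilon}>\gamma$ for every $\epsilon\in(0,2)$. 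The sub-case $Q\neq Q_0$ also goes through, though your stated bound $\lambda>\frac{3}{4-2\epsilon}$ and the caveat about $\epsilon$ near $\frac{1}{2}$ are off: from $m>\frac{1}{\lambda}$ and $2m\leqslant a_1+a_2+2m\leqslant 2-\epsilon$ one gets $\lambda>\frac{2}{2-\epsilon}>\gamma$ uniformly, so no ``narrow range'' of $\epsilon$ survives this step.

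The genuine gap is the case you defer: tangency with $Q=Q_0=F\cap\widetilde L_1\cap\widetilde L_2$. This is the heart of the lemma and occupies most of the paper's proof, and ``I expect the second blow-up will be needed to extract the final contradiction'' is a plan, not an argument. Concretely, after the second blow-up one must run a four-way case analysis on the position of $O\in G$ relative to $\widehat L_1$, $\widehat L_2$, $\widehat F$, and the linear data you list ($\widetilde L_i\cdot\widetilde\Delta=L_i\cdot\Delta-m$, $F\cdot\widetilde\Delta=m$, $a_1+a_2+2m+2\widetilde m\leqslant 2-\epsilon$) does not by itself close the cases $O=\widehat L_1\cap G$ and $O=\widehat L_2\cap G$. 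The paper needs two further ingredients absent from your outline: the inequality $a_1+a_2\leqslant\frac{4}{3}-\frac{2}{3}\epsilon+a_1\epsilon_1+a_2\epsilon_2$ (with $\epsilon_i=L_i\cdot R$, your $r_i$), extracted from the slope condition \eqref{equation:fixed-slope-condition-2}, which is what converts the adjunction output into a violation of \eqref{equation:gamma-2}; and, for $O=\widehat L_2\cap G$, an application of Theorem~\ref{theorem:Trento} to the transversal pair $(\widehat L_2,G)$ after verifying $\mathrm{mult}_O(\lambda\widehat\Delta)\leqslant 1$, since plain adjunction is too weak there. Until that computation is actually carried out, the lemma is not proved.
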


\begin{proof}
Suppose that $\pi(P)\in C_4$.
Let $T$ be the curve in $|-K_S|$ that is singular at the point $P$.
By Lemma~\ref{lemma:degree-two-3}, it is reducible,
so that $T=L_1+L_2$, where $L_1$ and $L_2$ are irreducible smooth curves such that $L_1^2=L_2^2=-1$, $L_1\cdot L_2=2$ and $P\in L_1\cap L_2$.
Let $\epsilon_1=L_1\cdot R$ and $\epsilon_2=L_2\cdot R$. Then $\epsilon=\epsilon_1+\epsilon_2$,
where $\epsilon_1\geqslant 0$ and $\epsilon_2\geqslant 0$, since $R$ is nef.

Write $D=a_1L_1+a_2L_2+\Delta$, where $a_1$ and $a_2$ are non-negative rational numbers,
and $\Delta$ is an effective $\mathbb{Q}$-divisor whose support contains neither $L_1$ nor $L_2$.
Then $a_1<\frac{2}{3}$ or $a_2<\frac{2}{3}$ by Lemma~\ref{lemma:degree-two-2}.
Without loss of generality, we may assume that $a_1<\frac{2}{3}$. It follows from \eqref{equation:fixed-slope-condition-2} that
\begin{multline*}
0=L\cdot\Big(D-\frac{2}{3}\big(L_1+L_2\big)\Big)=L\cdot\Bigg(\Big(a_1-\frac{2}{3}\Big)L_1+\Big(a_2-\frac{2}{3}\Big)L_2+\Delta\Bigg)=\\
=\Big(a_1-\frac{2}{3}\Big)\big(1-\epsilon_1\big)+\Big(a_2-\frac{2}{3}\Big)\big(1-\epsilon_2\big)+\Delta\cdot L\geqslant\Big(a_1-\frac{2}{3}\Big)\big(1-\epsilon_1\big)+\Big(a_2-\frac{2}{3}\Big)\big(1-\epsilon_2\big).
\end{multline*}
This gives us
\begin{equation}
\label{equation:extra}
a_1+a_2\leqslant \frac{4}{3}-\frac{2}{3}\epsilon+a_1\epsilon_1+a_2\epsilon_2.
\end{equation}

Let $m=\mathrm{mult}_{P}(\Delta)$. Then we get two inequalities:
\begin{equation}
\label{equation:deg2-bound2-1}
1-\epsilon_1+a_1-2a_2=\Delta\cdot L_1\geqslant m,
\end{equation}
and $1-\epsilon_2-2a_1+a_2=\Delta\cdot L_2\geqslant m$.
Adding them, we get $2-\epsilon\geqslant a_1+a_2+2m$,
so that $m\leqslant 1-\frac{\epsilon}{2}$.
Since $a_1<\frac{2}{3}$, it follows from \eqref{equation:deg2-bound2-1} that
$2a_2\leqslant 1-\epsilon_1+a_1\leqslant 1+a_1<\frac{5}{3}$,
so that $a_2<\frac{5}{6}$.

Denote by $\widetilde{L}_1$, $\widetilde{L}_2$ and $\widetilde \Delta$  the proper transforms on $\widetilde S$ of $L_1$, $L_2$  and $\Delta$, respectively.
Then the log pair
$(\widetilde{S},\lambda a_1\widetilde{L}_1+\lambda a_2\widetilde{L}_2+\lambda\widetilde{\Delta}+\big(\lambda(a_1+a_2+m)-1)F)$
is not log canonical at the point $Q$.
If~$Q\not\in\widetilde{L}_1\cup\widetilde{L}_2$, then Lemma~\ref{lemma:adjunction}, Lemma \ref{lemma:degree-two-3a} and \eqref{equation:gamma-2} give
$m=\widetilde{\Delta}\cdot F>\frac{1}{\lambda}>\frac{1}{\gamma}\geqslant\frac{2-\epsilon}{2}$,
which is impossible, since $m\leqslant 1-\frac{\epsilon}{2}$.
Thus, we have $Q\in\widetilde{L}_1\cup\widetilde{L}_2$.
If $Q\not\in\widetilde{L}_1$, then Lemma~\ref{lemma:adjunction} gives
$$
\frac{2}{\lambda}-\big(a_1+a_2+m\big)<\widetilde{L}_2\cdot\widetilde{\Delta}=1-\epsilon_2-2a_1+a_2-m,
$$
so that $1-\epsilon_2+2a_2-a_1>\frac{2}{\lambda}$.
Adding this inequality to \eqref{equation:deg2-bound2-1}, we get
$2-\epsilon>\frac{2}{\lambda}+m\geqslant\frac{2}{\lambda}$,
which contradicts \eqref{equation:gamma-2}. Thus, we see that $Q\in\widetilde{L}_1$. Similarly, it can be shown that $Q\in\widetilde{L}_2$.

We have $Q=F\cap\widetilde{L}_1\cap\widetilde{L}_2$. Then $L_1$ is tangent to $L_2$ at the point $P$.
Let $\widetilde{m}=\mathrm{mult}_{Q}(\widetilde{\Delta})$. Then
$\widetilde{\Delta}\cdot\widetilde{L}_1\geqslant \widetilde{m}$ and $\widetilde{\Delta}\cdot\widetilde{L}_2\geqslant\widetilde{m}$.
This gives
\begin{equation}
\label{equation:deg2-bound4-1}
\left\{%
\aligned
&1-\epsilon_1+a_1-2a_2-m\geqslant \widetilde{m},\\%
&1-\epsilon_2-2a_1+a_2-m\geqslant\widetilde{m}.\\%
\endaligned\right.
\end{equation}
In particular, adding the two inequalities, we obtain
\begin{equation}
\label{equation:deg2-bound5}
2-\epsilon\geqslant a_1+a_2+2m+2\widetilde{m}.
\end{equation}

Denote by $\widehat{L}_1$, $\widehat{L}_2$, $\widehat{F}$  and $\widehat{\Delta}$ the proper transforms of $\widetilde{L}_1$, $\widetilde{L}_2$, $F$ and $\widetilde{\Delta}$ on $\widehat{S}$, respectively.
Then
$(\widehat{S},\lambda a_1\widehat{L}_1+\lambda a_2\widehat{L}_2+\lambda\widehat{\Delta}+(\lambda(a_1+a_2+m)-1)\widehat{F}+(\lambda(2a_1+2a_2+m+\widetilde{m})-2\big)G)$
is not log canonical at the point $O$. Moreover, we have
$2a_1+2a_2+m+\widetilde{m}\leqslant 1-\frac{\epsilon}{2}+\frac{3}{2}(a_1+a_2)$ by~\eqref{equation:deg2-bound5}.
Now using \eqref{equation:gamma-2}, \eqref{equation:extra}, $a_1<\frac{2}{3}$ and $a_2<\frac{5}{6}$, we get
\begin{multline*}
2a_1+2a_2+m+\widetilde{m}\leqslant 1-\frac{\epsilon}{2}+\frac{3}{2}\Big(\frac{4}{3}-\frac{2}{3}\epsilon+a_1\epsilon_1+a_2\epsilon_2\Big)=3-\frac{3}{2}\Big(\epsilon-a_1\epsilon_1-a_2\epsilon_2\Big)=\\
=3+\frac{3}{2}\Big(a_1-1\Big)\epsilon_1+\frac{3}{2}\Big(a_2-1\Big)\epsilon_2<3-\frac{1}{2}\epsilon_1-\frac{1}{4}\epsilon_2\leqslant 3-\frac{1}{4}\epsilon_1-\frac{1}{4}\epsilon_2=3-\frac{1}{4}\epsilon=\frac{12-\epsilon}{4}\leqslant\frac{3}{\gamma}<\frac{3}{\lambda}.
\end{multline*}

If $O\not\in\widehat{L}_1\cup\widetilde{L}_2\cup\widehat{F}$, then Lemma~\ref{lemma:adjunction} and \eqref{equation:gamma-2} give
$\widetilde{m}=\widehat{\Delta}\cdot G>\frac{1}{\lambda}>\frac{2-\epsilon}{2}$,
which is impossible, because $\widetilde{m}\leqslant m\leqslant 1-\frac{\epsilon}{2}$.
If $O=\widehat{F}\cap G$, then Lemma~\ref{lemma:adjunction} gives
$\frac{2}{\lambda}-(a_1+a_2+m)<G\cdot\widehat{\Delta}=\widetilde{m}$,
so that $a_1+a_2+m+\widetilde{m}>\frac{2}{\lambda}$.
This together with \eqref{equation:deg2-bound5} imply
$2-\epsilon\geqslant a_1+a_2+2m+2\widetilde{m}\geqslant a_1+a_2+m+\widetilde{m}>\frac{2}{\lambda}$,
which contradicts \eqref{equation:gamma-2}. Thus, we see that either $O=\widehat{L}_1\cap G$ or $O=\widehat{L}_2\cap G$.

Suppose that $O=\widehat{L}_1\cap G$. Then $\widetilde{m}=G\cdot\widehat{\Delta}>\frac{1}{\lambda}-a_1$ by Lemma~\ref{lemma:adjunction}.
Plugging this into \eqref{equation:deg2-bound4-1}, we obtain
$1-\epsilon_2-2a_1+a_2\geqslant m+\widetilde{m}\geqslant 2\widetilde{m}>\frac{2}{\lambda}-2a_1$,
so that $1-\epsilon_2+a_2>\frac{2}{\lambda}$. But
$$
\frac{3}{\lambda}-\big(2a_1+2a_2+m+\widetilde{m}\big)<\widehat{L}_1\cdot\widehat{\Delta}=1-\epsilon_1+a_1-2a_2-m-\widetilde{m}
$$
by Lemma~\ref{lemma:adjunction}, so that $1-\epsilon_1+3a_1>\frac{3}{\lambda}$. Adding these inequalities, we get
$3a_1+a_2+2-\epsilon>\frac{5}{\lambda}$.
Now using $a_1<\frac{2}{3}$, $a_2<\frac{5}{6}$ and \eqref{equation:extra}, we get
\begin{multline*}
\frac{5}{\lambda}<3a_1+a_2+2-\epsilon<\frac{10}{3}+a_1+a_2-\epsilon\leqslant \frac{10}{3}+\Big(\frac{4}{3}-\frac{2}{3}\epsilon+a_1\epsilon_1+a_2\epsilon_2\Big)-\epsilon=\\
=\frac{14}{3}-\frac{5}{3}\epsilon+a_1\epsilon_1+a_2\epsilon_2<\frac{14}{3}-\frac{5}{3}\epsilon+\frac{2}{3}\epsilon_1+\frac{5}{6}\epsilon_2=\frac{14}{3}-\epsilon_1-\frac{5}{6}\epsilon_2\leqslant\frac{14}{3}-\frac{5}{6}\epsilon,
\end{multline*}
which implies that $\gamma>\lambda>\frac{30}{28-5\epsilon}$. This is impossible by \eqref{equation:gamma-2}.
We conclude that $O\ne\widehat{L}_1\cap G$.

We have $O=\widehat{L}_2\cap G$.
By \eqref{equation:deg2-bound5}, we also have
$4\widetilde{m}\leqslant 2m+2\widetilde{m}\leqslant a_1+a_2+2m+2\widetilde{m}\leqslant 2-\epsilon$,
so that $\widetilde{m}\leqslant\frac{1}{2}$.
Then \eqref{equation:gamma-2} gives
$\mathrm{mult}_{O}(\lambda\widehat{\Delta})\leqslant\lambda\widetilde{m}\leqslant\frac{\lambda}{2}<1$.
Thus, applying Theorem \ref{theorem:Trento} either we obtain
$2(\frac{1}{\lambda}-a_2)<\widehat{\Delta}\cdot G=\widetilde{m}$,
or we obtain
\begin{equation}
\label{equation:long-one}
2\Big(\frac{3}{\lambda}-\big(2(a_1+a_2)+m+\widetilde{m}\big)\Big)<\widehat{\Delta}\cdot\widehat{L}_2=1-\epsilon_2-2a_1+a_2-m-\widetilde{m}.
\end{equation}
If $\widetilde{m}>2(\frac{1}{\lambda}-a_2)$, then \eqref{equation:gamma-2} and \eqref{equation:deg2-bound4-1} imply
$$
1+a_1-2a_2\geqslant 1-\epsilon_1+a_1-2a_2\geqslant m+\widetilde{m}\geqslant 2\widetilde{m}>4\Big(\frac{1}{\lambda}-a_2\Big)=\frac{4}{\lambda}-4a_2\geqslant\frac{11}{3}-4a_2,
$$
which gives $a_1+2a_2>\frac{8}{3}$.
But $a_1+2a_2<\frac{7}{3}$,
since $a_1<\frac{2}{3}$ and $a_2<\frac{5}{6}$.
We see that \eqref{equation:long-one} holds.
It gives us $1-\epsilon_2+2a_1+5a_2+m+\widetilde{m}>\frac{6}{\lambda}$.
But $m+\widetilde{m}\leqslant 1-\epsilon_1+a_1-2a_2$ by \eqref{equation:deg2-bound4-1},
so that we obtain $2-\epsilon+3(a_1+a_2)\geqslant 1-\epsilon_2+2a_1+5a_2+m+\widetilde{m}>\frac{6}{\lambda}$.
Now using \eqref{equation:extra}, we get
$$
\frac{6}{\lambda}<2-\epsilon+3(a_1+a_2)\leqslant 6-3\epsilon+3a_1\epsilon_1+3a_2\epsilon_2<6-3\epsilon+2\epsilon_1+\frac{5}{2}\epsilon_2\leqslant 6-\frac{\epsilon}{2},
$$
because $a_1<\frac{2}{3}$ and $a_2<\frac{5}{6}$.
Thus, we see that $\lambda>\frac{12}{12-\epsilon}$, which is impossible by \eqref{equation:gamma-2}.
\end{proof}

Let $\widetilde{C}$ be a curve in the pencil $|-K_{\widetilde{S}}|$ that passes through $Q$.
Denote by $C$ its image on the surface $S$. Then $P\in C$ and $C\in|-K_{S}|$.
Moreover, the curve $C$ is smooth at $P$, because $P$ is not contained in $C_4$ by Lemma~\ref{lemma:degree-two-4}.
Furthermore, we have

\begin{lemma}
\label{lemma:degree-two-5}
The curve $C$ is reducible.
\end{lemma}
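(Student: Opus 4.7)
The plan is to argue by contradiction: assume $C$ is irreducible and derive an inequality contradicting~\eqref{equation:gamma-2}. Since $\pi(P)\notin C_4$ by Lemma~\ref{lemma:degree-two-4}, the curve $C$ is smooth at $P$; in particular $\widetilde{C}\cdot F=1$, so $\widetilde{C}$ meets $F$ transversally at the single point $Q$. Writing $D=aC+\Delta$ with $C\not\subset\mathrm{Supp}(\Delta)$, Lemma~\ref{lemma:degree-two-2} gives $a<\frac{2}{3}$, and the identity $C\cdot D=-K_S\cdot L=2-\epsilon$ yields $m:=\mathrm{mult}_P(\Delta)\leqslant C\cdot\Delta=2-\epsilon-2a$. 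On $\widetilde{S}$ a short calculation gives $\widetilde{C}\cdot\widetilde{\Delta}=C\cdot\Delta-m$, hence $\widetilde{m}:=\mathrm{mult}_Q(\widetilde{\Delta})\leqslant 2-\epsilon-2a-m$, which I would repackage as $a+m+\widetilde{m}\leqslant 2-\epsilon-a$ and $2a+m+\widetilde{m}\leqslant 2-\epsilon$.

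The key idea is to apply Lemma~\ref{lemma:adjunction} on $\widehat{S}$ taking the exceptional curve $G$ as the smooth curve. Setting $c_F:=\lambda(a+m)-1$ and $c_G:=\lambda(2a+m+\widetilde{m})-2$, the log pair
$$
\bigl(\widehat{S},\;\lambda a\widehat{C}+\lambda\widehat{\Delta}+c_F\widehat{F}+c_G G\bigr)
$$
is not log canonical at $O\in G$, and Lemma~\ref{lemma:degree-two-3a}, the second displayed inequality, and $\lambda<\gamma\leqslant\frac{12}{11}$ together imply $c_F,c_G\in(0,1]$, so Lemma~\ref{lemma:adjunction} is applicable at $O$ with $C_1=G$. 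Since $\widehat{C}\cap\widehat{F}=\emptyset$ (the point $Q$ having been blown up) and both meet $G$ in exactly one point, there are three mutually exclusive possibilities for $O$: the generic case $O\notin\widehat{C}\cup\widehat{F}$; the case $O=\widehat{C}\cap G$; and the case $O=\widehat{F}\cap G$. Plugging in the local multiplicities $\mathrm{mult}_O(G\cdot\widehat{C})$ and $\mathrm{mult}_O(G\cdot\widehat{F})$ and using $\mathrm{mult}_O(G\cdot\widehat{\Delta})\leqslant G\cdot\widehat{\Delta}=\widetilde{m}$, I would obtain respectively $\widetilde{m}>\frac{1}{\lambda}$, $a+\widetilde{m}>\frac{1}{\lambda}$, and $a+m+\widetilde{m}>\frac{2}{\lambda}$.

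Combining each of these with $a+m>\frac{1}{\lambda}$ from Lemma~\ref{lemma:Skoda} and with the two bounds displayed above, a short computation yields $\lambda>\frac{2}{2-\epsilon}$ in all three cases. A direct check from~\eqref{equation:gamma-2} shows $\gamma<\frac{2}{2-\epsilon}$ for every admissible $\epsilon\in(0,2)$: for $\epsilon<1$ the inequality $\frac{12}{12-\epsilon}<\frac{2}{2-\epsilon}$ reduces to $-10\epsilon<0$, and for $\epsilon\geqslant 1$ one has $\frac{2}{2-\epsilon}\geqslant 2>\frac{12}{11}$. This contradicts $\lambda<\gamma$ and forces $C$ to be reducible. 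The principal subtlety in carrying out the plan is verifying the upper bound $c_G\leqslant 1$, which depends on the sharp inequality $2a+m+\widetilde{m}\leqslant 2-\epsilon$, so that Lemma~\ref{lemma:adjunction} may be uniformly invoked with $C_1=G$ in all three positions of $O$; no appeal to Theorem~\ref{theorem:Trento} is required.
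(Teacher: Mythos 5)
Your argument is correct, but it takes a longer route than the paper. The paper finishes the job already on $\widetilde{S}$: since $\widetilde{C}$ was \emph{chosen} to pass through $Q$ and is smooth there with $\widetilde{C}\cdot F=1$ and $\lambda a<1$, one application of Lemma~\ref{lemma:adjunction} along $\widetilde{C}$ at $Q$ gives $\frac{2}{\lambda}-(a+m)<\widetilde{C}\cdot\widetilde{\Delta}=2-2a-\epsilon-m$, hence $\frac{2}{\lambda}<2-a-\epsilon\leqslant 2-\epsilon$, contradicting \eqref{equation:gamma-2} at once — no second blow-up and no case analysis. You instead pass to $\widehat{S}$ and apply Lemma~\ref{lemma:adjunction} along $G$ at $O$, splitting into the three positions of $O$ relative to $\widehat{C}$ and $\widehat{F}$; each case, combined with Skoda's bound $a+m>\frac{1}{\lambda}$ and the intersection bound $2a+m+\widetilde{m}\leqslant 2-\epsilon$ (which, note, still relies on $Q\in\widetilde{C}$, so you are using the choice of $C$ just as essentially as the paper does), yields the same contradiction $\lambda>\frac{2}{2-\epsilon}$. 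The key technical points you rightly flag — $c_G=\lambda(2a+m+\widetilde{m})-2\in(0,1]$ so that adjunction applies with $C_1=G$, and $\widetilde{m}\leqslant m$ — all check out, as does the verification $\gamma<\frac{2}{2-\epsilon}$ in both regimes of \eqref{equation:gamma-2}. What the paper's version buys is brevity and the avoidance of the second blow-up altogether; what yours buys is essentially nothing extra here, since the single adjunction along $\widetilde{C}$ already closes every case you consider, but it is a sound proof.
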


\begin{proof}
Suppose that $C$ is irreducible.
Let us write $D=aC+\Delta$, where $a$ is a non-negative rational number,
and $\Delta$ is an effective $\mathbb{Q}$-divisor on the surface $S$,
whose support does not contain the curve $C$.
Then $a<\frac{2}{3}$ by Lemma \ref{lemma:degree-two-2}.

Denote by $\widetilde{\Delta}$ the proper transform of the divisor $\Delta$ on the surface $\widetilde{S}$.
Let $m=\mathrm{mult}_{P}(\Delta)$. Then
$(\widetilde{S}, \lambda a\widetilde{C}+\lambda\widetilde{\Delta}+(\lambda (a+m)-1)F)$ is not log canonical at $Q$.
Now, applying Lemma~\ref{lemma:adjunction}, we get $\frac{2}{\lambda}-(a+m)<\widetilde{C}\cdot\widetilde{\Delta}=2-2a-\epsilon-m$,
so that $\frac{2}{\lambda}<2-\epsilon$, which contradicts \eqref{equation:gamma-2}.
\end{proof}

Thus, we have $C=L_1+L_2$, where $L_1$ and $L_2$ are irreducible curves such that $L_1^2=L_2^2=-1$ and $L_1\cdot L_2=2$.
Since $C$ is smooth at $P$, we have $P\not\in L_1\cap L_2$.
Without loss of generality, we may assume that $P\in L_1$.
Let $\epsilon_1=L_1\cdot R$ and $\epsilon_2=L_2\cdot R$. Then $\epsilon=\epsilon_1+\epsilon_2$.

Write $D=a_1L_1+a_2L_2+\Delta$, where $a_1$ and $a_2$ are non-negative rational numbers,
and $\Delta$ is an effective $\mathbb{Q}$-divisor, whose support contains neither $L_1$ nor $L_2$.
Then $a_1\leqslant \frac{1+a_2}{2}$, since
\begin{equation}
\label{equation:degree-two-last}
1-\epsilon_2-2a_1+a_2=L_2\cdot\Delta\geqslant 0.
\end{equation}
If $a_2<\frac{2}{3}$, then
$a_1\leqslant \frac{1+a_2}{2}<\frac{5}{6}$.
Vice versa, if $a_2\geqslant\frac{2}{3}$, then $a_1<\frac{2}{3}$ by Lemma~\ref{lemma:degree-two-2}.
In both cases we have $a_1<\frac{5}{6}$.
Then $\lambda a_1\leqslant\frac{10}{11}$ by \eqref{equation:gamma-2}.

Denote by $\widetilde{L}_1$, $\widetilde{L}_2$ and $\widetilde \Delta$ the proper transforms on $\widetilde S$ of $L_1$, $L_2$ and $\Delta$, respectively.
Then the log pair
$(\widetilde{S},\lambda a_1\widetilde{L}_1+\lambda\widetilde{\Delta}+(\lambda(a_1+m)-1)F)$
is not log canonical at $Q=\widetilde L_1\cap F$, by construction. Since $\lambda a_1\leqslant 1$, we can apply Lemma~\ref{lemma:adjunction} to this log pair and the curve $\widetilde{L}_1$.
This gives
$$
\frac{2}{\lambda}-(a_1+m)<\widetilde{L}_1\cdot\widetilde{\Delta}=1-\epsilon_1+a_1-2a_2-m,
$$
so that $\frac{2}{\lambda}<1-\epsilon_1+2a_1-2a_2$. Using \eqref{equation:degree-two-last}, we get
$\frac{2}{\lambda}<2-\epsilon_1-\epsilon_2-a_2\leqslant 2-\epsilon_1-\epsilon_2=2-\epsilon$,
which implies that $\lambda>\frac{2}{2-\epsilon}$. This is impossible by \eqref{equation:gamma-2}.

The obtained contradiction completes the proof of Theorem~\ref{theorem:degree-two}.

\section{Cubic surfaces}
\label{section:cubic-surfaces}

Let $S$ be a smooth cubic surface in $\mathbb{P}^3$.
Fix six disjoint lines $E_1$, $E_2$, $E_3$, $E_4$, $E_5$, $E_6$ in $S$, and fix a positive rational number $x\leqslant\frac{1}{10}$.
Let $L=-K_{S}+x\sum_{i=1}^6E_i$. Then $L$ is ample.

The goal of this section is to prove Theorem~\ref{theorem:cubic-surface-1}.
By Theorem~\ref{theorem:Dervan}, to do this it is enough to show that $\alpha(S,L)\geqslant\frac{2}{3+3x}$.
Suppose that this is not true.
Then there exist an effective $\mathbb{Q}$-divisor $D$ on the surface $S$ and a positive rational number $\lambda<\frac{2}{3+3x}$ such that
$D\sim_{\mathbb{Q}} L$, and $(S,\lambda D)$ is not log canonical at some point $P\in S$.
In this section we seek for a contradiction.

\begin{lemma}
\label{lemma:cubic-surface-codimension-one}
The log pair $(S,\lambda D)$ is log canonical outside of finitely many points.
\end{lemma}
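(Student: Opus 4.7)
The plan is to show that $\mathrm{mult}_C(D) < \tfrac{3+3x}{2}$ for every irreducible curve $C \subset S$. Since $\lambda < \tfrac{2}{3+3x}$, this gives $\lambda \cdot \mathrm{mult}_C(D) < 1$ along every component of $\mathrm{Supp}(D)$. Consequently $(S,\lambda D)$ is log canonical at a generic point of each irreducible component of $\mathrm{Supp}(D)$, and any failure of log canonicity is then confined to the isolated intersection or singular points of the finitely many components of $D$, giving the required finiteness.

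To bound $a := \mathrm{mult}_C(D)$, write $D = aC + \Delta$ with $C \not\subset \mathrm{Supp}(\Delta)$. For any nef $\mathbb{Q}$-divisor $M$ on $S$ with $M \cdot C > 0$, the identity $M \cdot D = M \cdot L$ combined with $M \cdot \Delta \geq 0$ yields $a \leq \tfrac{M \cdot L}{M \cdot C}$. Since $-K_S$ is the hyperplane class of the cubic embedding $S \hookrightarrow \mathbb{P}^3$, $-K_S \cdot C$ equals the projective degree of $C$. I split into three cases according to this degree.

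If $-K_S \cdot C \geq 3$, taking $M = -K_S$ gives $a \leq \tfrac{3+6x}{3} = 1 + 2x$. If $C$ is a line ($-K_S \cdot C = 1$, $C^2 = -1$), then $M = -K_S - C$ is nef: two distinct lines on a smooth cubic meet in at most one point, so $M \cdot E = 1 - C \cdot E \geq 0$ for every line $E$, while $M^2 = 0$. Here $M \cdot C = 2$ and $M \cdot L \leq 2 + 7x$, using $L \cdot C \geq 1 - x$ (attained when $C$ is one of the six fixed lines $E_i$); hence $a \leq 1 + \tfrac{7x}{2}$. If $C$ is a conic ($-K_S \cdot C = 2$, $C^2 = 0$), then $M = -K_S - \tfrac{1}{2}C$ is nef: equivalently, $-2K_S - C$ satisfies $(-2K_S - C) \cdot E = 2 - C \cdot E \geq 0$ for every line $E$ (a conic and a line in $\mathbb{P}^3$ meet in at most two points) and $(-2K_S - C)^2 = 4$. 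Here $M \cdot C = 2$ and $M \cdot L \leq 2 + \tfrac{11x}{2}$, using $L \cdot C \geq 2 + x$ (the minimum over conics, attained on the $d = 1$ conic pencils $H - E_i$ in the $\mathbb{P}^2$-picture obtained by contracting the six disjoint lines). Hence $a \leq 1 + \tfrac{11x}{4}$.

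In every case, $a \leq 1 + \tfrac{11x}{4}$, which is strictly less than $\tfrac{3+3x}{2}$ whenever $x < \tfrac{2}{5}$, and in particular for $0 < x \leq \tfrac{1}{10}$. The main technical point is verifying nefness of the auxiliary divisors $-K_S - C$ and $-2K_S - C$; since the Mori cone of a smooth cubic surface is generated by the $27$ lines, this reduces to the finitely many intersection checks carried out above.
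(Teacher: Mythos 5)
Your proof is correct and follows the same overall strategy as the paper's: reduce to showing that no irreducible component $C$ of $D$ can appear with multiplicity exceeding $\frac{1}{\lambda}>\frac{3+3x}{2}$, then rule this out by pairing with auxiliary divisors, splitting on the degree of $C$ in $\mathbb{P}^3$ (the paper derives $\deg C\leqslant 2$ from $-K_S\cdot D=3+6x$, which is your $M=-K_S$ case). For lines the two arguments essentially coincide: the paper intersects $\Delta$ with a general conic $Z\in|-K_S-C|$, which is exactly your nef divisor $M=-K_S-C$, and your bound $2a\leqslant 2+7x$ matches the paper's worst case $C=E_i$. In the conic case your argument is a genuine streamlining: the paper decomposes $\Delta=bZ+\Omega$ along the residual line $Z$ and combines the two inequalities $Z\cdot\Omega\geqslant 0$ and $C\cdot\Omega\geqslant 0$, with a further subcase according to whether $Z$ is one of the $E_i$, whereas your single nef divisor $M=-K_S-\frac{1}{2}C$ (nonnegative on all $27$ lines and with $M^2>0$) gives the bound in one step. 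One small slip in your summary: the uniform bound should be $a\leqslant 1+\frac{7x}{2}$, since the line case dominates the conic bound $1+\frac{11x}{4}$, so the comparison with $\frac{3+3x}{2}$ requires $x<\frac{1}{4}$ rather than $x<\frac{2}{5}$; this is harmless for $x\leqslant\frac{1}{10}$.
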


\begin{proof}
Suppose that this is not true.
Then $D=aC+\Delta$, where $C$ is an irreducible curve,
$a$ is a positive rational number such that $a>\frac{1}{\lambda}$,
and $\Delta$ is an effective $\mathbb{Q}$-divisor,
whose support does not contain the curve $C$.
Denote by $d$ the degree of the curve $C$ in $\mathbb{P}^3$. Then
$$
3+6x=-K_{S}\cdot D=-K_S\cdot\big(aC+\Delta\big)=ad-K_{S}\cdot\Delta\geqslant ad>\frac{d}{\lambda}>\frac{3d+3xd}{2}.
$$
which implies that $d\leqslant 2$. Then $C$ is a line or a conic.
Denote by $Z$ a general curve in $|-K_S-C|$.

Suppose that $C$ is a line.
Then $Z$ is a smooth conic, $C\cdot Z=2$, and $Z$ is not contained in the support of $\Delta$.
If $C$ is one of the lines $E_1$, $E_2$, $E_3$, $E_4$, $E_5$, $E_6$, then
$2+7x-2a\geqslant\Delta\cdot Z\geqslant 0$,
so that $3+3x<\frac{2}{\lambda}<2a\leqslant 2+7x$, which is impossible, because $x\leqslant\frac{1}{10}$. Then
$$
2+6x-2a\geqslant\Big(-K_{S}+x\sum_{i=1}^6E_i-aC\Big)\cdot Z=(L-aC)\cdot Z=\Delta\cdot Z\geqslant 0,
$$
so that $2a\leqslant 2+6x$, which is impossible, because $2a>\frac{2}{\lambda}>3+3x$ and $x\leqslant\frac{1}{10}$.

We see that $C$ is a conic. Then $Z$ is a line.
Write $\Delta=bZ+\Omega$, where $b$ is a non-negative rational number,
and $\Omega$ is an effective $\mathbb{Q}$-divisor,
whose support contains no $Z$.
If $Z$ is one of the lines $E_1$, $E_2$, $E_3$, $E_4$, $E_5$, $E_6$, then
$1-x-2a+b=Z\cdot\Omega\geqslant 0$ and
$2+7x-2b\geqslant C\cdot\Omega\geqslant 0$,
so that $4+5x\geqslant 4a>\frac{4}{\lambda}>6+6x$, which is absurd.
Observe that $(C+Z)\cdot E_i=1$ for each $E_i$.
But
$$
1+x\sum_{i=1}^6 Z\cdot E_i-2a+b=Z\cdot\Omega\geqslant 0
$$
and $2+x \sum_{i=1}^6 C\cdot E_i-2b=C\cdot\Omega\geqslant 0$. This gives
$$
4+12x\geqslant 4+x\Big(6+\sum_{i=1}^6 Z\cdot E_i\Big)=4+x\Big(2\sum_{i=1}^6 Z\cdot E_i+\sum_{i=1}^6 C\cdot E_i\Big)\geqslant 4a>\frac{4}{\lambda}>6+6x,
$$
so that $x>\frac{1}{3}$, which is impossible, because $x\leqslant\frac{1}{10}$.
\end{proof}

Let $h\colon S\to\mathbb{P}^2$ be the contraction of the lines $E_1$, $E_2$, $E_3$, $E_4$, $E_5$, $E_6$,

\begin{lemma}
\label{lemma:cubic-surface-A-P-in-E1-E2-E3-E4-E5-E6}
The point $P$ is contained in $E_1\cup E_2\cup E_3\cup E_4\cup E_5\cup E_6$.
\end{lemma}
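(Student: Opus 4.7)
The plan is to argue by contradiction: suppose $P \notin E_1 \cup \cdots \cup E_6$. Then $h$ is a local isomorphism near $P$, so $p := h(P)$ is distinct from the six points $h(E_1), \ldots, h(E_6) \in \mathbb{P}^2$. First blow up $P$ via $f \colon \widetilde{S} \to S$ with exceptional curve $F$; then the pair $(\widetilde{S}, \lambda \widetilde{D} + (\lambda \mathrm{mult}_P(D) - 1)F)$ is not log canonical at some $Q \in F$. Since $Q$ determines a tangent direction at $P$, which corresponds via $dh$ to a unique line $\ell \subset \mathbb{P}^2$ through $p$, I would take $C$ to be the strict transform of $\ell$ on $S$; then $C$ is smooth at $P$ and $\widetilde{C}$ passes through $Q$.

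For the key numerics, set $J := \{j : h(E_j) \in \ell\}$ and $k := |J|$, so that $h^*\ell = C + \sum_{j \in J} E_j$. Because the six points $h(E_1), \ldots, h(E_6)$ are in general linear position (no three collinear, as $S$ is a smooth cubic), one has $k \in \{0, 1, 2\}$. Thus $L \cdot C = 3 - k(1-x)$, $C^2 = 1 - k$, and $C \cdot E_j = 1$ for $j \in J$ while $C \cdot E_i = 0$ for $i \notin J$. Decompose $D = aC + \sum_{j \in J} b_j E_j + \Delta$ with $\Delta$ effective and containing neither $C$ nor any $E_j$ ($j \in J$) in its support. Lemma~\ref{lemma:cubic-surface-codimension-one} yields $\lambda a \leq 1$, and intersecting $D$ with each $E_j$ together with the effectivity of $\Delta$ (using $D \cdot E_j = L \cdot E_j = 1-x$) gives $\Delta \cdot E_j = 1 - x - a + b_j \geq 0$, i.e.\ $a - b_j \leq 1 - x$.

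Finally, apply Lemma~\ref{lemma:adjunction} to $\widetilde{C}$ at the point $Q$. Since $P \notin E_j$, the curve $E_j$ is unaffected by the blow-up and $E_j \cap F = \emptyset$, so the $E_j$-components contribute $0$ to the local intersection at $Q$; after cancellation of the $\mathrm{mult}_P$-terms, the adjunction inequality simplifies to
$$
\lambda\left[3 - k(1-x) + \sum_{j \in J}(a - b_j)\right] > 2.
$$
Combined with $\lambda < \frac{2}{3+3x}$, this forces $\sum_{j \in J}(a - b_j) > 3x + k(1-x)$, contradicting the effectivity bound $\sum_{j \in J}(a - b_j) \leq k(1-x)$ whenever $x > 0$. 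The main subtlety I expect is treating the cases $k = 1, 2$ (when $\ell$ passes through one or two of the blown-up points, so $C$ is merely the strict transform rather than the full pullback of $\ell$ and $D$ may acquire the corresponding $E_j$ as components) on equal footing with the generic case $k = 0$; the decomposition above, together with the $\Delta \cdot E_j \geq 0$ constraints obtained from all $j \in J$, handles all three cases uniformly.
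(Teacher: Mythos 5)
Your proof is correct, but it takes a genuinely different route from the paper. The paper's argument is global and very short: it pushes $D$ forward to $\mathbb{P}^2$ via $h$, adds a general line $\ell$ missing $h(P)$, notes that by Lemma~\ref{lemma:cubic-surface-codimension-one} the non-klt locus of $\big(\mathbb{P}^2,\lambda\overline{D}+\ell\big)$ is the disjoint union of the isolated point $h(P)$ and the line $\ell$, and derives a contradiction with Shokurov's connectedness principle, since $-(K_{\mathbb{P}^2}+\lambda\overline{D}+\ell)\sim_{\mathbb{Q}}(2-3\lambda)\ell$ is ample for $\lambda<\frac{2}{3}$. Your argument instead stays on $S$, blows up $P$, selects the line $\ell$ through $h(P)$ in the tangent direction singled out by the non-lc point $Q\in F$, and runs the standard adjunction computation (Lemma~\ref{lemma:adjunction} applied to $\widetilde{C}$ at $Q$, with $\lambda a\leqslant 1$ supplied by Lemma~\ref{lemma:cubic-surface-codimension-one} and positivity of the coefficient of $F$ by Lemma~\ref{lemma:Skoda}); your bookkeeping of the cases $k=0,1,2$ and the constraints $a-b_j\leqslant 1-x$ from $\Delta\cdot E_j\geqslant 0$ is accurate, and the final inequality $\sum_{j\in J}(a-b_j)>3x+k(1-x)$ against $\sum_{j\in J}(a-b_j)\leqslant k(1-x)$ does yield $0>3x$, a contradiction. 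The trade-off: the paper's proof is shorter and works verbatim for $x=0$, but invokes the connectedness principle as a black box; yours is longer but entirely elementary, using only the local tools of Section~\ref{section:background} and matching the style of the rest of Section~\ref{section:cubic-surfaces}, at the price of degenerating exactly at $x=0$ (harmless here, since the section fixes $x>0$).
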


\begin{proof}
Suppose that $P\not\in E_1\cup E_2\cup E_3\cup E_4\cup E_5\cup E_6$.
and let $\ell$ be a line in $\mathbb{P}^2$ such $h(P)\not\in\ell$.
Let $\overline{D}=h(D)$, and denote by $\mathrm{LCS}(\mathbb{P}^2, \lambda\overline{D}+\ell)$ the locus in $\mathbb{P}^2$ consisting of all points
where the log pair $(\mathbb{P}^2, \lambda\overline{D}+\ell)$ is not Kawamata log terminal.
Then this set contains both $h(P)$ and $\ell$, 
so that it is not connected by Lemma~\ref{lemma:cubic-surface-codimension-one}.
But it follows from $\lambda<\frac{2}{3+3x}\leqslant\frac{2}{3}$ that the divisor
$$
-(K_{\mathbb{P}^2}+\lambda\overline{D}+\ell)\sim_{\mathbb{Q}} (3-3\lambda-1)\ell
$$
is ample. This contradicts Shokurov's connectedness principle \cite[Theorem~6.9]{Shokurov}.
\end{proof}

We may assume that $P\in E_1$.
Then $E_1$ is contained in the support of the divisor $D$,
because otherwise we would have $1-x=E_1\cdot D\geqslant\mathrm{mult}_{P}(D)>1$ by Lemma~\ref{lemma:Skoda}.

\begin{lemma}
\label{lemma:cubic-surface-Eckardt-point}
The point $P$ is an Eckardt point.
\end{lemma}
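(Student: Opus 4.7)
The plan is to suppose $P$ is not an Eckardt point and to derive a contradiction from the failure of log canonicity of $(S,\lambda D)$ at $P$.

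First, I would write $D = a_1 E_1 + \Delta$ with $a_1 > 0$ and $E_1 \not\subset \mathrm{Supp}(\Delta)$. Intersecting $D$ with a general conic $Z \in |-K_S - E_1|$ (so $Z \cdot E_1 = 2$ and $Z \cdot L = 2+7x$) yields $a_1 \leq 1 + 7x/2$, and hence $\lambda a_1 < 1$ since $1/\lambda > (3+3x)/2$ and $x \leq 1/10$. Then Lemma~\ref{lemma:adjunction} applied with the curve $E_1$ gives $\mathrm{mult}_P(E_1 \cdot \Delta) > 1/\lambda$; combined with $E_1 \cdot \Delta = 1 - x + a_1$ this yields the lower bound
\[
a_1 > \frac{1}{\lambda} - (1-x) > \frac{1}{2} + \frac{5x}{2}.
\]

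Next, since $E_1 \subset T_P S$, the tangent hyperplane section decomposes as $T_P S \cap S = E_1 + C_2$, where $C_2 \in |-K_S - E_1|$ is the unique conic in this pencil passing through $P$ (forced by the fact that $T_P S \cap S$ must be singular at $P$). If $P$ is not an Eckardt point, then $C_2$ falls into one of the following categories: (a) $C_2$ is a smooth irreducible conic tangent to $E_1$ at $P$ with $E_1 \cap C_2 = \{P\}$; or (b) $C_2 = \ell_1 + \ell_2$ is a pair of distinct lines with only $\ell_1$ passing through $P$. In case (b), the curves $E_1$ and $\ell_1$ are smooth at $P$ and meet transversally there; writing $D = a_1 E_1 + b_1 \ell_1 + b_2 \ell_2 + \Delta''$ with $\ell_i \not\subset \mathrm{Supp}(\Delta'')$, I would check that $\lambda a_1, \lambda b_1 \leq 1$ using intersection bounds and then apply Theorem~\ref{theorem:Trento} to $E_1$ and $\ell_1$ at $P$. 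The resulting inequality, combined with the lower bound on $a_1$ and the intersections $\ell_1^2 = -1$, $\ell_1 \cdot E_1 = 1$, $\ell_1 \cdot L = 1 + \mu x$ (where $\mu$ is the number of $E_i$ meeting $\ell_1$), should be incompatible with $x \leq 1/10$.

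The main obstacle is case (a), where $E_1$ and $C_2$ share the tangent direction at $P$ and Theorem~\ref{theorem:Trento} cannot be applied directly on $S$. To handle this, I would blow up $P$ to obtain $f\colon \widetilde S \to S$ with exceptional curve $F$: a local coordinate check (with $E_1 = \{y=0\}$ and $C_2 = \{y = cx^2 + \cdots\}$) shows that $\widetilde E_1$ and $\widetilde C_2$ meet $F$ at a common point $P_1 \in F$, but the three curves $\widetilde E_1, \widetilde C_2, F$ are pairwise transversal at $P_1$. The non-log-canonicity of $(S, \lambda D)$ propagates to $(\widetilde S, \lambda \widetilde D + (\lambda \mathrm{mult}_P(D) - 1) F)$ at some $Q \in F$, and by a further application of Lemma~\ref{lemma:adjunction} one forces $Q = P_1$. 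Applying Theorem~\ref{theorem:Trento} on $\widetilde S$ to two of these three pairwise-transversal curves at $P_1$, together with the lower bound $a_1 > 1/2 + 5x/2$, the intersection $C_2 \cdot D = 2 + 7x$, and $x \leq 1/10$, should deliver the desired contradiction.
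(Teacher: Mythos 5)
Your preparatory steps match the paper: the decomposition $D=a_1E_1+\Delta$, the upper bound $a_1\leq 1+\frac{7}{2}x$ from a general conic in $|-K_S-E_1|$, and the lower bound $a_1>\frac{1}{\lambda}-(1-x)$ from Lemma~\ref{lemma:adjunction} are all there (and correct). But your case enumeration for a non-Eckardt point is incomplete, and this is a genuine gap. Writing $T_PS\cap S=E_1+C_2$, the possibilities are: (a) $C_2$ smooth and tangent to $E_1$ at $P$; (b) $C_2=\ell_1+\ell_2$ with only $\ell_1$ through $P$; and also (c) $C_2$ a smooth irreducible conic passing through $P$ and meeting $E_1$ \emph{transversally} there (with a second intersection point $P'\neq P$). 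Case (c) is the generic behaviour for a point of $E_1$ and you omit it entirely. Cases (b) and (c) are exactly the configurations in which $T$ has an ordinary double point at $P$, and the paper eliminates both at once by a completely different device: it constructs an auxiliary effective divisor $\Upsilon$ with $\lambda D+\Upsilon\sim_{\mathbb{Q}}-K_S$ and $(S,\lambda D+\Upsilon)$ not log canonical at $P$, and then invokes \cite[Corollary~1.24]{CheltsovParkWon} to conclude that $P$ cannot be an ordinary double point of $T$. If you want to avoid that citation you must give a direct argument covering (b) \emph{and} (c), and your sketch for (b) is not yet one: Theorem~\ref{theorem:Trento} requires $\mathrm{mult}_P(\Delta)\leq 1$ for the residual part, which you never verify (the available bound $\mathrm{mult}_P(\Delta)\leq E_1\cdot\Delta\leq 2+\frac{5}{2}x-2b$ can exceed $\frac{1}{\lambda}$), and the branch of Trento's alternative involving $\mathrm{mult}_P(\ell_1\cdot\Delta)$ only yields $b_1>1+\frac{x}{2}$, which is not contradicted by the upper bound $b_1\leq 1+\frac{(6-\mu)x}{2}$ when $\ell_1$ meets only two of the $E_i$.

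In the remaining tacnodal case (your case (a)) the geometric picture you describe is right --- after one blow-up the curves $\widetilde E_1$, $\widetilde C_2$, $F$ pass pairwise transversally through a single point of $F$ --- but your plan to finish with a single application of Theorem~\ref{theorem:Trento} on $\widetilde S$ again founders on its multiplicity hypothesis: taking $\widetilde E_1$ and $\widetilde C_2$ as the two distinguished curves forces $F$ into the residual divisor with coefficient $\lambda(a+b+m)-1$, so the hypothesis becomes $\lambda(a+b+m+\widetilde m)\leq 2$; the bounds one can extract ($a+b+m\leq 2+4x$ and $m\leq\frac{4+5x}{3}$) only give $a+b+m+\widetilde m\leq\frac{10+17x}{3}$, which may exceed $\frac{2}{\lambda}$. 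The paper avoids this by blowing up a second time, after which $\widehat E_1$, $\widehat C$ and $\widehat F$ are pairwise disjoint, and then closing the argument with Lemma~\ref{lemma:adjunction} alone via a four-way analysis of the position of the non-log-canonical point $O\in G$; Theorem~\ref{theorem:Trento} is never used in this lemma. So while your outline captures the correct geometry, both halves of the argument need repair before they constitute a proof.
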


\begin{proof}
Let $T$ be the plane section of $S$ that is singular at $P$,
let $Z_1$, $Z_2$, $Z_3$, $Z_4$, $Z_5$, and $Z_6$ be general conics in
the pencils $|-K_{S}-E_1|$, $|-K_{S}-E_2|$, $|-K_{S}-E_3|$, $|-K_{S}-E_4|$, $|-K_{S}-E_5|$, and  $|-K_{S}-E_6|$, respectively.
Let
$$
\Upsilon=\Bigg(\frac{1-\lambda}{6}-\lambda x\Bigg)\sum_{i=1}^6E_i+\frac{1-\lambda}{6}\Big(Z_1+Z_2+Z_3+Z_4+Z_5+Z_6\Big).
$$
Then $\Upsilon$ is an effective $\mathbb{Q}$-divisor such that $\Upsilon+\lambda D\sim_{\mathbb{Q}}-K_{S}$,
and the pair $(S,\lambda D+\Upsilon)$ is not log canonical at $P$.
Then $P$ is not an ordinary double point of the curve $T$ by \cite[Corollary~1.24]{CheltsovParkWon}.
Thus, either $P$ is an Eckardt point or $T$ has a tacnodal singularity at the point $P$.

Suppose that $P$ is not an Eckardt point. Then $T=E_1+C$,
where $C$ is smooth conic such that $E_1$ is tangent to $C$ at the point $P$.
We have
$$
\lambda D\sim_{\mathbb{Q}}(\lambda+\lambda x)E_1+\lambda C+\lambda x\big(E_2+E_3+E_4+E_5+E_6\big),
$$
and $(S, (\lambda+\lambda x)E_1+\lambda C+\lambda x(E_2+E_3+E_4+E_5+E_6))$ is log canonical.
Therefore, by Remark~\ref{remark:convexity}, we may assume that the support of the divisor $D$ does not contain either $C$ or $E_2$ (or both).

Write $D=aE_1+bC+\Delta$, where $a$ and $b$ are non-negative rational numbers,
and $\Delta$ is an effective $\mathbb{Q}$-divisor,
whose support does not contain the curves $E_1$ and $C$.
We know that $a>0$.
If $b>0$, then $1-x-b\geqslant E_2\cdot\Delta\geqslant 0$, so that $b\leqslant 1-x$. Thus, we always have $b\leqslant 1-x$.

Let $m=\mathrm{mult}_P(\Delta)$. Then $1-x+a-2b=E_1\cdot\Delta\geqslant m$ and
$2+7x-2a=C\cdot\Delta\geqslant m$, which gives
$a\leqslant 1+\frac{7}{2}x$ and
$$
m+a+b\leqslant\frac{5+13x-3a-2b}{3}+a+b=\frac{5+13x}{3}+\frac{b}{3}\leqslant\frac{5+13x}{3}+\frac{1-x}{3}=2+4x<\frac{2}{\lambda}
$$
and $m\leqslant\frac{2-2x+2a-4b}{3}+\frac{2+7x-2a}{3}=\frac{4+5x-4b}{3}\leqslant \frac{4+5x}{3}<\frac{1}{\lambda}$.

Let $f\colon\widetilde{S}\to S$ be the blow up at $P$, and let $F$ be its exceptional curve.
Denote by $\widetilde{E}_1$, $\widetilde{C}$ and $\widetilde{\Delta}$ the proper transforms on $\widetilde{S}$ of the curve $E_1$, the curve $C$ and the divisor $\Delta$, respectively.
Then
$(\widetilde{S}, \lambda a\widetilde{E}_1+\lambda b\widetilde{C}+\lambda\widetilde{\Delta}+(\lambda(a+b+m)-1)F)$
is not log canonical at some point $Q\in F$.

If $Q\ne\widetilde{E}_1\cap\widetilde{C}$, then Lemma~\ref{lemma:adjunction} gives $m=F\cdot\widetilde{\Delta}>\frac{1}{\lambda}$, which is impossible, as $m<\frac{1}{\lambda}$. Thus $Q=\widetilde{E}_1\cap\widetilde{C}$. 
Let $\widetilde{m}=\mathrm{mult}_{Q}(\widetilde{\Delta})$.
Then
$1-x+a-2b-m=\widetilde{E}_1\cdot\widetilde{\Delta}\geqslant\widetilde{m} \text{ and } 2+7x-2a-m\geqslant\widetilde{C}\cdot\widetilde{\Delta}\geqslant\widetilde{m}$.
Since $b\leqslant 1-x$, the latter inequality gives
$$
m+\widetilde{m}+2a+2b\leqslant 2+7x+2b\leqslant 2+7x+2(1-x)\leqslant 4+5x<\frac{3}{\lambda}.
$$

Let $g\colon\widehat{S}\to\widetilde{S}$ be the blow up at $Q$, and let $G$ be its exceptional curve.
Denote by $\widehat{E}_1$, $\widehat{C}$, $\widehat{F}$ and $\widehat \Delta$ the proper transforms on $\widehat S$
of $\widetilde{E}_1$, $\widetilde{C}$, $F$ and $\widetilde{\Delta}$, respectively.
Then the singularities of the log pair
$(\widehat{S}, \lambda a\widehat{E}_1+\lambda a\widehat{C}+\lambda\widehat{\Delta}+(\lambda(a+b+m)-1)\widehat{F}+(\lambda(2a+2b+m+\widetilde{m})-2)G)$
are not log canonical at some point $O\in G$.
Note that $1\geqslant \lambda(2a+2b+m+\widetilde{m})-2>0$.

The curves $\widehat{E}_1$, $\widehat{C}$, and $\widehat{F}$ are disjoint.
If $O\not\in\widehat{E}_1\cup\widehat{C}\cup\widehat{F}$, then
$\widetilde{m}=G\cdot\widehat{\Delta}>\frac{1}{\lambda}$
by Lemma~\ref{lemma:adjunction}. But $\widetilde{m}\leqslant m<\frac{1}{\lambda}$,
so that $O\in\widehat{E}_1\cup\widehat{C}\cup\widehat{F}$.
If $O=\widehat{E}_1\cap G$, then Lemma~\ref{lemma:adjunction} implies
$$
1-x+a-2b-m-\widetilde{m}=\widehat{E_1}\cdot\widehat{\Delta}>\frac{3}{\lambda}-2a-2b-m-\widetilde{m},
$$
so that we have
$3\frac{2+7x}{2}+1-x\geqslant 3a+1-x>\frac{3}{\lambda}>\frac{3(3+3x)}{2}$,
which is impossible, because $x\leqslant\frac{1}{10}$.
If $O=\widehat{C}\cap G$, then Lemma~\ref{lemma:adjunction} implies
$$
2+7x-2a-m-\widetilde{m}\geqslant\widehat{C}\cdot\widehat{\Delta}>\frac{3}{\lambda}-2a-2b-m-\widetilde{m},
$$
so that
$4+5x=2+7x+2(1-x)\geqslant 2+7x+2b>\frac{3}{\lambda}>\frac{3(3+x)}{2}$,
which is impossible, because~$x\leqslant\frac{1}{10}$.
Finally, if we have $O=\widehat{F}\cap G$, then Lemma~\ref{lemma:adjunction} implies
$m-\widetilde{m}=\widehat{F}\cdot\widehat{\Delta}>\frac{3}{\lambda}-2a-2b-m-\widetilde{m}$,
so that $2m+2a+2b>\frac{3}{\lambda}$, which is impossible because $m+a+b\leqslant 2+4x$ and $x\leqslant\frac{1}{10}$.
\end{proof}

We see that $S$ contains two more lines (except the line $E_1$) that pass through the point $P$.
Denote these two lines by $L_1$ and $L_2$.
Recall that $E_1$ is contained in the support of $D$.
If $L_1$ is not contained in the support of $D$, then  Lemma~\ref{lemma:Skoda} gives
$$
1+6x\geqslant 1+xL_1\cdot\big(E_1+E_2+E_3+E_4+E_5+E_6\big)=L_1\cdot D\geqslant\mathrm{mult}_{P}(D)>\frac{1}{\lambda}>\frac{3+3x}{2}.
$$
However, $1+6x<\frac{3+3x}{2}$, because $x\leqslant\frac{1}{10}$.
Thus, we see that $L_1$ is also contained in the support of $D$.
Similarly, we see that $L_2$ is contained in the support of $D$ as well.

As usual, we write $D=aE_1+b_1L_1+b_2L_2+\Delta$, where $a$, $b_1$, $b_2$ are positive rational numbers,
and $\Delta$ is an effective $\mathbb{Q}$-divisor whose support does not contain $E_1$, $L_1$, or $L_2$.
We have
$$
\lambda D\sim_{\mathbb{Q}}(\lambda+\lambda x)E_1+\lambda L_1+\lambda L_2+\lambda x\big(E_2+E_3+E_4+E_5+E_6\big),
$$
and the singularities of the log pair $(S, (\lambda+\lambda x)E_1+\lambda L_1+\lambda L_2+\lambda x(E_2+E_3+E_4+E_5+E_6))$ are log canonical.
Hence, by Remark~\ref{remark:convexity}, we may assume that $\mathrm{Supp}(\Delta)$ does not contain at least one line among
$E_2$, $E_3$, $E_4$, $E_5$, $E_6$.

\begin{lemma}
\label{lemma:cubic-surface-b1-b2}
Either $b_1\leqslant 1-x$ or $b_2\leqslant 1-x$ (or both).
\end{lemma}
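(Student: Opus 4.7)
The plan is to extract the bound from the intersection of $D$ with a cleverly chosen line $E_k$, exploiting the plane-section relation $E_1+L_1+L_2\sim -K_S$ forced by the Eckardt configuration at $P$.

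First, the convexity reduction preceding the lemma provides an index $k\in\{2,\dots,6\}$ with $E_k\not\subset\mathrm{Supp}(\Delta)$, so that $E_k\cdot\Delta\geqslant 0$. The Remark~\ref{remark:convexity} step cannot let us drop $E_1$ from $\mathrm{Supp}(D)$, because the same intersection/Lemma~\ref{lemma:Skoda} argument used just before the lemma statement would give $1-x=E_1\cdot D\geqslant\mathrm{mult}_P(D)>\frac{1}{\lambda}>\frac{3+3x}{2}$, contradicting $x\leqslant \frac{1}{10}$; and if it had dropped $L_1$ or $L_2$, the corresponding $b_i$ would be $0$ and the lemma would hold trivially.

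Next I would compute $E_k\cdot D$ in two ways. Directly, $E_k\cdot D=E_k\cdot L=-K_S\cdot E_k+xE_k^{2}=1-x$, since the lines $E_1,\dots,E_6$ are pairwise disjoint and $E_k^{2}=-1$. Using the decomposition $D=aE_1+b_1L_1+b_2L_2+\Delta$ and $E_k\cdot E_1=0$, the same quantity equals $b_1(E_k\cdot L_1)+b_2(E_k\cdot L_2)+E_k\cdot\Delta$, so discarding the nonnegative term $E_k\cdot\Delta$ yields
$$
b_1(E_k\cdot L_1)+b_2(E_k\cdot L_2)\leqslant 1-x.
$$

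The key geometric input is that at an Eckardt point, the tangent plane section of $S\subset\mathbb{P}^3$ is the union of the three concurrent lines, hence $E_1+L_1+L_2\sim -K_S$. Intersecting with $E_k$ gives $(L_1+L_2)\cdot E_k=-K_S\cdot E_k-E_1\cdot E_k=1-0=1$. Since $E_k\cdot L_1$ and $E_k\cdot L_2$ are nonnegative integers summing to $1$, exactly one of them equals $1$; substituting into the displayed inequality gives either $b_1\leqslant 1-x$ or $b_2\leqslant 1-x$. There is no real obstacle here: the whole lemma reduces to the linear equivalence produced by the Eckardt configuration together with a single intersection computation on a line not meeting $\mathrm{Supp}(\Delta)$.
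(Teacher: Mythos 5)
Your proof is correct and follows essentially the same route as the paper: both pick a line $E_k$ ($k\geqslant 2$) not in $\mathrm{Supp}(\Delta)$ via the convexity reduction, compute $1-x=E_k\cdot D\geqslant b_1(E_k\cdot L_1)+b_2(E_k\cdot L_2)$, and use $E_k\cdot(L_1+L_2)=1$ to conclude. Your extra justification of why the dropped component may be taken among $E_2,\dots,E_6$, and the derivation of $E_k\cdot(L_1+L_2)=1$ from $E_1+L_1+L_2\sim -K_S$, only make explicit what the paper leaves implicit.
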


\begin{proof}
Without loss of generality, we may assume that $E_2\not\subset\mathrm{Supp}(D)$.
Then
$$
1-x-E_2\cdot\big(b_1L_1+b_2L_2\big)=E_2\cdot\Delta\geqslant 0.
$$
But
$E_2\cdot (L_1+L_2)=1$,
so that either $E_2\cdot L_1=1$ and $E_2\cdot L_2=0$,
or  $E_2\cdot L_1=0$ and $E_2\cdot L_2=1$.
In the former case, we get $b_1\leqslant 1-x$.
In the latter case, we get $b_2\leqslant 1-x$.
\end{proof}

We may assume that $h(L_1)$ is a line in $\mathbb{P}^2$, and $h(L_2)$ is a conic.
Then $h(L_1)$ is tangent to the conic $h(L_2)$  at the point $h(E_1)$.
We may also assume that $h(L_1)$ contains the point $h(E_6)$.
Then $L_1\cdot E_1=L_1\cdot E_6=1$,
$L_1\cdot E_2=L_1\cdot E_3=L_1\cdot E_4=L_1\cdot E_5=0$,
$L_2\cdot E_6=0$ and $L_2\cdot E_1=L_2\cdot E_2=L_2\cdot E_3=L_2\cdot E_4=L_2\cdot E_5=1$.

Let $m=\mathrm{mult}_P(\Delta)$. Then $E_1\cdot\Delta\geqslant m$, $L_1\cdot\Delta\geqslant m$ and $L_2\cdot\Delta\geqslant m$.
This gives
\begin{equation}
\label{equation:cubic-surfaces}
\left\{%
\aligned
&1-x+a-b_1-b_2\leqslant m,\\%
&1+2x-a+b_1-b_2\leqslant m,\\%
&1+5x-a-b_1+b_2\leqslant m.\\%
\endaligned\right.
\end{equation}

\begin{lemma}
\label{lemma:cubic-surface-inequalities}
One has $m+a\leqslant 1+\frac{7}{2}x$, $m+b_1\leqslant 1+2x$, $m+b_2\leqslant 1+\frac{x}{2}$ and $a+b_1+b_2+m\leqslant 3+3x$.
\end{lemma}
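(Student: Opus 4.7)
The first three inequalities are immediate from the three relations in (4.2), each of which is bounded below by $m$ via the multiplicity inequality (recall that $E_1,L_1,L_2$ are smooth at $P$ and are not in $\mathrm{supp}(\Delta)$). Adding the $L_1$- and $L_2$-bounds gives $2+7x-2a\geq 2m$, i.e.\ $m+a\leq 1+\tfrac{7x}{2}$; adding the $E_1$- and $L_2$-bounds gives $m+b_1\leq 1+2x$; and adding the $E_1$- and $L_1$-bounds gives $m+b_2\leq 1+\tfrac{x}{2}$. Each of these three pairings has the effect of cancelling two of the unknowns among $a,b_1,b_2$, isolating the third together with~$m$.

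For the fourth inequality the plan is to intersect $\Delta$ with a carefully chosen effective divisor through $P$ of smaller ``$L$-degree''. Since $L_1$ is the proper transform of the line through $h(E_1)$ and $h(E_6)$ in $\mathbb{P}^2$, we have $L_1\sim H-E_1-E_6$, so
\[
M:=L_1+E_6
\]
is an effective divisor of class $H-E_1$, and it has multiplicity exactly $1$ at $P$ (through the smooth branch $L_1$, since $E_6$ does not pass through $P$). A direct computation using $D\sim L=-K_S+x\sum E_i$ and the intersection numbers $(H-E_1)\cdot L=2+x$, $(H-E_1)\cdot E_1=1$, $(H-E_1)\cdot L_1=0$, $(H-E_1)\cdot L_2=1$ yields
\[
(H-E_1)\cdot\Delta=(2+x)-a-b_2.
\]

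To conclude $M\cdot\Delta\geq m$, one applies the standard multiplicity inequality $\mathrm{mult}_P(M)\cdot\mathrm{mult}_P(\Delta)=m$, which is valid as soon as $M$ and $\Delta$ share no common component. The only potentially shared component is $E_6$; but since $M\cdot E_6=L_1\cdot E_6+E_6^2=1-1=0$, subtracting any $E_6$-part of $\Delta$ leaves $M\cdot\Delta$ unchanged, so one reduces at once to the standard case. This yields $m+a+b_2\leq 2+x$.

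Adding this to the previously proved $m+b_1\leq 1+2x$ gives $2m+a+b_1+b_2\leq 3+3x$, and since $m\geq 0$ we obtain $a+b_1+b_2+m\leq 3+3x$. The main conceptual point (and the one place things could have gone wrong) is that in the Eckardt configuration under consideration the pencil $|H-E_1|$ has no \emph{smooth} member through $P$: the unique member of $|H-E_1|$ through $P$ is precisely the reducible divisor $L_1+E_6$. The argument is designed to accommodate this degeneracy, extracting the same bound from a reducible member that one would obtain from a smooth one.
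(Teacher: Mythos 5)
Your proof is correct. For the first three inequalities you do exactly what the paper does: pair up the three bounds $E_1\cdot\Delta\geqslant m$, $L_1\cdot\Delta\geqslant m$, $L_2\cdot\Delta\geqslant m$ from \eqref{equation:cubic-surfaces} (which, note, should be read as inequalities $\geqslant m$) so that two of $a,b_1,b_2$ cancel. For the fourth inequality, however, you take a genuinely different route. The paper invokes Lemma~\ref{lemma:cubic-surface-b1-b2} (itself a consequence of the Remark~\ref{remark:convexity} reduction that some $E_j$ with $j\geqslant 2$ is not a component of $D$) to split into the cases $b_1\leqslant 1-x$ and $b_2\leqslant 1-x$, and in each case bounds $m+a+b_1+b_2$ using a single relation from \eqref{equation:cubic-surfaces}. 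You instead intersect $\Delta$ with the reducible member $L_1+E_6$ of the conic pencil $|H-E_1|$ through $P$; your computation $(H-E_1)\cdot\Delta=2+x-a-b_2$ and the reduction modulo a possible $E_6$-component (legitimate since $(L_1+E_6)\cdot E_6=0$ and $P\notin E_6$) correctly yield $m+a+b_2\leqslant 2+x$, which combined with $m+b_1\leqslant 1+2x$ gives the stronger statement $2m+a+b_1+b_2\leqslant 3+3x$. The trade-off: your argument needs the extra geometric input of the pencil $|H-E_1|$ and the degenerate-member analysis, but in exchange it is case-free and does not depend on Lemma~\ref{lemma:cubic-surface-b1-b2} at all, so it would survive even without the earlier convexity reduction on the support of~$D$. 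Both arguments are sound; yours is a legitimate and slightly more self-contained alternative for this step.
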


\begin{proof}
The first three inequalities directly follow from \eqref{equation:cubic-surfaces}.
To prove the fourth inequality, let us recall that $b_1\leqslant 1-x$ or $b_2\leqslant 1-x$ by Lemma~\ref{lemma:cubic-surface-b1-b2}.
Thus, if $b_1\leqslant 1-x$, then
$$
3+x\geqslant 3\geqslant 1+2x+2b_1\geqslant m+a+b_2+b_1
$$
by \eqref{equation:cubic-surfaces}.
Similarly, if $b_2\leqslant 1-x$, then \eqref{equation:cubic-surfaces} gives $3+3x\geqslant 1+5x+2b_2\geqslant m+a+b_2+b_1$.
\end{proof}

Let $f\colon\widetilde{S}\to S$ be a blow up of the point $P$. Denote by $F$ be the $f$-exceptional curve,
and denote by $\widetilde{E}_1$, $\widetilde{L}_1$ and $\widetilde{L}_2$ the proper transforms on $\widetilde{S}$ of the lines $E_1$, $L_1$ and $L_2$, respectively.
Similarly, denote by $\widetilde{\Delta}$ the proper transform on $\widetilde{S}$ of the $\mathbb{Q}$-divisor $\Delta$.
Then the singularities of the log pair
$(\widetilde{S}, \lambda a\widetilde{E}_1+\lambda b_1\widetilde{L}_1+\lambda b_1\widetilde{L}_1+\lambda\widetilde{\Delta}+(\lambda(a+b_1+b_2+m)-1)F)$
are not log canonical at some point $Q\in F$. By Lemma~\ref{lemma:cubic-surface-inequalities},
we have $m+a+b_1+b_2\leqslant 3+3x<\frac{2}{\lambda}$, since $\lambda<\frac{2}{3+3x}$.

Lemma~\ref{lemma:adjunction} gives
$$
m=F\cdot\widetilde{\Delta}>
\left\{%
\aligned
&\frac{3+3x}{2}-a\ \text{if $Q=F\cap\widetilde{E}_1$},\\%
&\frac{3+3x}{2}-b_1\ \text{if $Q=F\cap\widetilde{L}_1$},\\%
&\frac{3+3x}{2}-b_2\ \text{if $Q=F\cap\widetilde{L}_2$},\\%
&\frac{3+3x}{2}\ \text{if $Q\not\in\widetilde{E}_1\cup\widetilde{L}_1\cup \widetilde{L}_2$},\\%
\endaligned\right.%
$$
because $\lambda<\frac{2}{3+3x}$.
But $m+a\leqslant 1+\frac{7}{2}x$, $m+b_1\leqslant 1+2x$ and $m+b_2\leqslant 1+\frac{x}{2}$ by Lemma~\ref{lemma:cubic-surface-inequalities}.
This immediately leads to a contradiction, because $0\leqslant x\leqslant\frac{1}{10}$.

The obtained contradiction completes the proof of Theorem~\ref{theorem:cubic-surface-1}.

\section{Del Pezzo surfaces of degree four and higher}
\label{section:deg-4}

Let $S$ be a smooth del Pezzo surface such that $K_{S}^2\geqslant 4$. Let $d=K_{S}^2$.
Let $L$ be an ample $\mathbb{Q}$-divisor on the surface $S$.
Let $\nu(L)=\frac{-K_S\cdot L}{L^2}$.
The goal of this section is to prove Theorem~\ref{theorem:big-degree}.

If $S$ is one of the surfaces $\mathbb{P}^2$, $\mathbb P^1\times\mathbb P^1$ or $\mathbb{F}_1$, then one easily sees that $\alpha(S,L)<\frac{2}{3}\nu(L)$.
Thus, to prove Theorem~\ref{theorem:big-degree}, we may assume $d\leqslant 7$.
In this case the Mori cone $\overline{\mathbb{NE}(S)}$ is a polyhedral cone that is generated by all $(-1)$-curves in $S$. Let
$$
\mu_L=\mathrm{inf}\Big\{\lambda\in\mathbb{Q}_{\geqslant 0}\ \Big|\ \text{the $\mathbb{Q}$-divisor}\ K_{S}+\lambda L\in\overline{\mathbb{NE}(S)}\Big\}.
$$
Replacing $L$ by $\mu_LL$, we may assume that $\mu_L=1$.
Denote by $\Delta_{L}$ the smallest extremal face of the Mori cone $\overline{\mathbb{NE}(S)}$ that contains the divisor $K_{S}+L$,
and denote its dimension by $r_L$.

If $\Delta_L=0$ and $d=4$, then $\alpha(S,L)=\frac{2}{3}\nu(L)$ by \cite[Theorem~1.7]{Ch08}.
Similarly, we see that if $\Delta_L=0$ and $d\geqslant 5$, then $\alpha(S,L)\leqslant\frac{1}{2}\nu(L)$.
Thus, we may assume that $\Delta_L\ne 0$, so that $r_L>0$.

Let $\phi_L\colon S\to Z$ be the contraction of the face $\Delta_{L}$.
Then either $\phi_L$ is a birational morphism, or $\phi_L$ is a conic bundle with $Z=\mathbb{P}^1$.

\begin{lemma}
\label{lemma:degree-four-birational}
Suppose that $Z=\mathbb{P}^2$.
Then $\alpha(S,L)<\frac{2}{3}\nu(L)$.
\end{lemma}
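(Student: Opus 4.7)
The plan is to identify $S$ with the blow-up of $\mathbb{P}^2$ at $r = 9-d$ points $p_1,\dots,p_r$ via $\phi_L$, with exceptional curves $E_1,\dots,E_r$ and $H$ the pull-back of the hyperplane class. Since $K_S+L$ lies in the relative interior of the face $\Delta_L$ generated by $E_1,\dots,E_r$, the class of $L$ takes the form $L=3H-\sum_{i=1}^r b_iE_i$ with $0<b_i<1$. Setting $\sigma=\sum b_i$ and $\tau=\sum b_i^2$, a direct computation yields $\nu(L)=(9-\sigma)/(9-\tau)$.

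To prove $\alpha(S,L)<\tfrac{2}{3}\nu(L)$ I will exhibit an effective $\mathbb{Q}$-divisor $D\sim_{\mathbb{Q}} L$ and a point $P\in S$ at which $\mathrm{lct}_P(S,D)<\tfrac{2}{3}\nu(L)$. A naive first choice is to concentrate multiplicity at a generic point $q\in\mathbb{P}^2$: take $D=(3-\sigma)H_0+\sum b_i\tilde\ell_i$, with $H_0$ and $\tilde\ell_i$ the strict transforms of lines through $q$ and through $q,p_i$ respectively. A standard blow-up at the point above $q$ gives $\mathrm{lct}(S,D)\leq 2/3$, but since $\nu(L)\leq 1$ in our setting (with equality only in the excluded anticanonical case), this bound is not strict enough. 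I therefore refine by reordering so that $b_1=\max_i b_i$ and taking
$$
D=(3-\sigma)E_1+(b_1+3-\sigma)\tilde\ell_1+\sum_{i=2}^r b_i\tilde\ell_i,
$$
where the $\tilde\ell_i$ are chosen generically so that $\tilde\ell_i$ meets $E_i$ transversally at a single point $P_i$ and does not pass through $P_j$ for $j\neq i$. One checks $D$ is effective (when $\sigma\leq 3$) and $D\sim_{\mathbb{Q}}L$. At $P_1=\tilde\ell_1\cap E_1$ the divisor consists of two smooth curves meeting transversally with coefficients $b_1+3-\sigma$ and $3-\sigma$; Lemma~\ref{lemma:adjunction} combined with the standard blow-up analysis then yields $\mathrm{lct}_{P_1}(S,D)\leq 1/(b_1+3-\sigma)$.

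The lemma therefore reduces to the polynomial inequality
$$
3(9-\tau)<2(9-\sigma)(b_1+3-\sigma),
$$
to be checked for $b_1=\max_i b_i$ and all admissible configurations $(b_1,\dots,b_r)\in(0,1)^r$ with $r\in\{2,3,4,5\}$. This is the first of the two technical inequalities deferred to Appendix~\ref{section:inequalities}. The hard part is the parameter regime where $\sigma$ is close to $3$ (so the coefficient $3-\sigma$ degenerates) or some $b_i$ is close to $1$: there my line $\tilde\ell_1$ must be replaced by the strict transform of a cuspidal cubic with cusp at $p_1$ passing through $p_2,\dots,p_r$, yielding an effective $D'\sim_{\mathbb{Q}}L$ whose log canonical threshold at the tangent point $P\in\tilde C\cap E_1$ is bounded by $3/(2(3-b_1))$ via a $(2,1)$-weighted blow-up; the resulting inequality $9(9-\tau)<4(9-\sigma)(3-b_1)$ is the second technical inequality. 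The main obstacle is to show that the two divisor constructions together cover every admissible $(b_1,\dots,b_r)$.
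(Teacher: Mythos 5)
Your reduction of the lemma to the polynomial inequality $3(9-\tau)<2(9-\sigma)(b_1+3-\sigma)$ does not work: that inequality is false on a large open part of the admissible parameter space, not just near the degenerate boundary you flag. Concretely, take $d=4$ (so $r=5$) and $b_1=\cdots=b_5=\tfrac12$; then $\sigma=\tfrac52$, $\tau=\tfrac54$, the left side is $3(9-\tau)=\tfrac{93}{4}$ while the right side is $2(9-\sigma)(b_1+3-\sigma)=13$. Here your divisor is perfectly effective ($\sigma\leq 3$) and its largest coefficient is $b_1+3-\sigma=1$, so the bound $\alpha(S,L)\leqslant 1$ says nothing, whereas $\tfrac23\nu(L)=\tfrac{13}{23.25}\approx 0.56$. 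Your fallback inequality $9(9-\tau)<4(9-\sigma)(3-b_1)$ also fails there ($69.75\not<65$), and both inequalities likewise fail at $d=6$ with $b_1=b_2=b_3=0.8$. So the two constructions do not cover all admissible configurations, and the claim that these are ``the two technical inequalities deferred to the Appendix'' is incorrect: Proposition~\ref{proposition:Simons-center} compares $\tfrac{2}{3+2a_1+2\delta+N}$ with $\tfrac23\nu(L)$ and has nothing to do with your inequalities. The underlying problem is that spreading the class of $L$ over strict transforms of general lines through the $p_i$ caps every coefficient at roughly $b_1+3-\sigma$, which is far too small when several $b_i$ are of moderate size.

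The paper's proof avoids this by loading a large coefficient onto the exceptional curve $E_1$ itself. In your coordinates ($a_i=1-b_i$), for $d=5,6$ one writes $L$ as an effective sum in which $E_1$ carries coefficient $2+a_1=3-b_1$ (using the lines $L_{1i}$ through pairs of the blown-up points), giving $\alpha(S,L)\leqslant\tfrac{1}{3-b_1}$, which is checked directly against $\tfrac23\nu(L)$; for $d=7$ the line $L_{12}$ carries coefficient $3$. For $d=4$ one additionally uses the conic $Z$ through all five points, via $-K_S\sim_{\mathbb Q}\tfrac32 E_1+\tfrac12(Z+L_{12}+\cdots+L_{15})$, and then trades the lines $L_{1i}$ against the curves $E_i$ ($i\geqslant2$) and $Z$ to push the coefficient of $E_1$ up to $\tfrac{3+2a_1+N}{2}$, where $N$ is the largest partial sum of $a_2,\dots,a_5$ not exceeding $1$; the resulting bound $\alpha(S,L)\leqslant\tfrac{2}{3+2a_1+N}$ is what Proposition~\ref{proposition:Simons-center} compares with $\tfrac23\nu(L)$. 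In the example above this gives $\alpha(S,L)\leqslant\tfrac25<0.56$. If you want to salvage your approach you would need divisor decompositions whose maximal coefficient grows like $3-b_1$ (or better), not like $b_1+3-\sigma$.
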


\begin{proof}
The face $\Delta_L$ is generated by $r_L=9-d$ disjoint $(-1)$-curves $E_1,\ldots,E_{r_L}$.
Then
$$
L\sim_{\mathbb{Q}}-K_{S}+\sum_{i=1}^{r_L}a_iE_i
$$
for some positive rational numbers $a_1,\ldots,a_{r_L}$.
We may assume that $a_1\geqslant\cdots\geqslant a_{r_L}$.
Then we must have $a_{1}<1$, because  $L\cdot E_{1}>0$.
We also have
$$
\nu(L)=\frac{d+\sum_{i=1}^{r_L}a_i}{d+2\sum_{i=1}^{r_L}a_i-\sum_{i=1}^{r_L}a_i^2}.
$$

For every $i\leqslant r_L$, denote by $L_{1i}$ the proper transform on $S$ of the line in $\mathbb{P}^2$ that passes through the points $\phi_L(E_1)$ and $\phi_L(E_i)$,
e.g., $\phi_L(L_{12})$ is the line that contains $\phi_L(E_1)$ and $\phi_L(E_2)$.

If $d=7$, then $L\sim_{\mathbb{Q}} 3L_{12}+(2+a_1)E_1+(2+a_2)E_2$, so that
$$
\alpha(S,L)\leqslant\frac{1}{3}<\frac{14+2a_1+2a_2}{21+6a_1+6a_2-3a_1^2-3a_2^2}=\frac{2}{3}\nu(L).
$$

Similarly, if $d=6$, then
$L\sim_{\mathbb{Q}} 2L_{12}+L_{13}+(2+a_1)E_1+(1+a_2)E_2+a_3E_3$,
which implies that
$$
\alpha(S,L)\leqslant\frac{1}{2+a_1}<\frac{12+2a_1+2a_2+2a_3}{18+6a_1+6a_2+6a_3-3a_1^2-3a_2^2-3a_3^2}=\frac{2}{3}\nu(L),
$$
because otherwise we would have
$$
0\geqslant 6 - 2\sum_{i=1}^3a_i + 12a_1 + 2a_1\sum_{i=1}^3a_i + 3\sum_{i=1}^3 a_1^2\geqslant 6+2\sum_{i=1}^3a_i+2a_1\sum_{i=1}^3a_i+3\sum_{i=1}^3a_1^2,
$$
which is absurd. If $d=5$, then $L\sim_{\mathbb{Q}} L_{12}+L_{13}+L_{14}+(2+a_1)E_1+a_2E_2+a_3E_3+a_4E_4$, which implies that
$$
\alpha(S,L)\leqslant\frac{1}{2+a_1}<\frac{10+2a_1+2a_2+2a_3+2a_4}{15+6a_1+6a_2+6a_3+6a_4-3a_1^2-3a_2^2-3a_3^2-3a_4^2}=\frac{2}{3}\nu(L).
$$
Indeed, if this inequality does not hold, then
$$
2\sum_{i=1}^4a_i\geqslant 5+3\sum_{i=1}^4 a_i^2+10a_1+2a_1\sum_{i=1}^4a_i>5a_1+3a_1^2+10a_1+2a_1^2,
$$
so that $8a_1\geqslant 2\sum_{i=1}^4a_i>15a_1+5a_1^2$, which is absurd.

We may assume that $d=4$.
Let $Z$ the proper transform on the surface $S$ of the conic in $\mathbb{P}^2$ that passes
through the points $\phi_L(E_1)$, $\phi_L(E_2)$, $\phi_L(E_3)$, $\phi_L(E_4)$, and $\phi_L(E_5)$. Then
$$
L\sim_{\mathbb{Q}}\frac{3+2a_1}{2}E_1+\frac{1}{2}Z+\frac{1}{2}L_{12}+\frac{1}{2}L_{13}+\frac{1}{2}L_{14}+\frac{1}{2}L_{15}+a_2E_2+a_3E_3+a_4E_4+a_5E_5,
$$
which implies that $\alpha(S,L)\leqslant\frac{2}{3+2a_1}$.
Let $N$ be the largest number among
$a_2$, $a_2+a_3$, $a_2+a_4$, $a_2+a_5$, $a_3+a_4$, $a_3+a_5$, $a_4+a_5$, $a_2+a_3+a_4$,
$a_2+a_3+a_5$, $a_2+a_4+a_5$, $a_3+a_4+a_5$, and $a_2+a_3+a_4+a_5$ that does not exceed $1$.
We claim that $\alpha(S,L)\leqslant\frac{2}{3+2a_1+N}$.
Indeed, we have $E_2\sim_{\mathbb{Q}}\frac{1}{2}E_1-\frac{1}{2}Z-\frac{1}{2}L_{12}+\frac{1}{2}L_{13}+\frac{1}{2}L_{14}+\frac{1}{2}L_{15}$, so that we have
$$
L\sim_{\mathbb{Q}}\frac{3+2a_1+a_2}{2}E_1+\frac{1-a_2}{2}Z+\frac{1-a_2}{2}L_{12}+\frac{1+a_2}{2}L_{13}+\frac{1+a_2}{2}L_{14}+\frac{1+a_2}{2}L_{15}+\sum_{i=3}^5a_iE_i.
$$
In particular, if $N=a_2$, then $\alpha(S,L)\leqslant\frac{2}{3+2a_1+N}$ as claimed.
We also have
$$
\left\{%
\aligned
&E_3\sim_{\mathbb{Q}}\frac{1}{2}E_1-\frac{1}{2}Z+\frac{1}{2}L_{12}-\frac{1}{2}L_{13}+\frac{1}{2}L_{14}+\frac{1}{2}L_{15},\\
&E_4\sim_{\mathbb{Q}}\frac{1}{2}E_1-\frac{1}{2}Z+\frac{1}{2}L_{12}+\frac{1}{2}L_{13}-\frac{1}{2}L_{14}+\frac{1}{2}L_{15},\\
&E_5\sim_{\mathbb{Q}}\frac{1}{2}E_1-\frac{1}{2}Z+\frac{1}{2}L_{12}+\frac{1}{2}L_{13}+\frac{1}{2}L_{14}-\frac{1}{2}L_{15}.
\endaligned\right.
$$
For any other value of $N$, Table \ref{table:Qdivisors-deg-4} provides an effective $\mathbb{Q}$-divisor
$D\sim_{\mathbb{Q}}L$ that shows the inequality $\alpha(S,L)\leqslant\frac{2}{3+2a_1+N}$ as claimed.

\begin{table}[!ht]%
\bgroup
\def\arraystretch{2}
\begin{tabular}{|c|c|c|}
\hline
$N$ & Effective $\mathbb Q$-divisor $D\sim_{\mathbb Q}L$ \\
\hline\hline
$a_2+a_3$ & $
\frac{3+2a_1+N}{2}E_1+\frac{1-N}{2}Z+\frac{1-a_2+a_3}{2}L_{12}+\frac{1+a_2-a_3}{2}L_{13}+{\displaystyle{\sum_{i=4,5}}}\left(\frac{1+a_2+a_3}{2}L_{1i}+a_iE_i\right)$\\
\hline
$a_2+a_4$ & $\frac{3+2a_1+N}{2}E_1+\frac{1-N}{2}Z+\frac{1-a_2+a_4}{2}L_{12}
+\frac{1+a_2-a_4}{2}L_{14}+{\displaystyle{\sum_{i=3,5}}}\left(\frac{1+a_2+a_4}{2}L_{1i}+a_iE_i\right)$\\
\hline
$a_2+a_5$ & $\frac{3+2a_1+N}{2}E_1+\frac{1-N}{2}Z+\frac{1-a_2+a_5}{2}L_{12}+\frac{1+a_2-a_5}{2}L_{15}+{\displaystyle{\sum_{i=3,4}}}\left(\frac{1+a_2+a_5}{2}L_{1i}+a_iE_i\right)$\\
\hline
$a_3+a_4$ & $\frac{3+2a_1+N}{2}E_1+\frac{1-N}{2}Z+\frac{1-a_3+a_4}{2}L_{13}+\frac{1+a_3-a_4}{2}L_{14}+{\displaystyle{\sum_{i=2,5}}}\left(\frac{1+a_3+a_4}{2}L_{1i}+a_iE_i\right)
$\\
\hline
$a_3+a_5$ & $\frac{3+2a_1+N}{2}E_1+\frac{1-N}{2}Z+\frac{1-a_3+a_5}{2}L_{13}+\frac{1+a_3-a_5}{2}L_{15}+{\displaystyle{\sum_{i=2,4}}}\left(\frac{1+a_3+a_5}{2}L_{1i}+a_iE_i\right)$\\
\hline
$a_4+a_5$ & $\frac{3+2a_1+N}{2}E_1+\frac{1-N}{2}Z+\frac{1-a_4+a_5}{2}L_{14}+\frac{1+a_4-a_5}{2}L_{15}+{\displaystyle{\sum_{i=2,3}}}\left(\frac{1+a_4+a_5}{2}L_{1i}+a_iE_i\right)$\\
\hline
$a_2+a_3+a_4$ & $\frac{3+2a_1+N}{2}E_1+\frac{1-N}{2}Z+\sum_{i=2,3,4}\left(\frac{1+\sum_{j=2,3,4}(\delta_{ij}a_j)}{2}L_{1i}\right)+\frac{1+N}{2}L_{15}+a_5E_5$\\
\hline
$a_2+a_4+a_5$ & $\frac{3+2a_1+N}{2}E_1+\frac{1-N}{2}Z+\sum_{i=2,4,5}\left(\frac{1+\sum_{j=2,4,5}(\delta_{ij}a_j)}{2}L_{1i}\right)+\frac{1+N}{2}L_{13}+a_3E_3$\\
\hline
$a_3+a_4+a_5$ & $\frac{3+2a_1+N}{2}E_1+\frac{1-N}{2}Z+\sum_{i=3,4,5}\left(\frac{1+\sum_{j=3,4,5}(\delta_{ij}a_j)}{2}L_{1i}\right)+\frac{1+N}{2}L_{12}+a_2E_2$\\
\hline
${\displaystyle{\sum_{i=2}^5}}a_i$ & $\frac{3+2a_1+N}{2}E_1+\frac{1-N}{2}Z+\sum_{i=2,3,4,5}\left(\frac{1+\sum_{j=2,3,4,5}(\delta_{ij}a_j)}{2}L_{1i}\right)$\\
\hline
\end{tabular}
\caption{Effective $\mathbb Q$-divisors, where $\delta_{ij}=1$ if $i\neq j$ and  $\delta_{ij}=-1$ if $i=j$.}
\label{table:Qdivisors-deg-4}
\egroup
\end{table}
Now Proposition~\ref{proposition:Simons-center} gives $\alpha(S,L)<\frac{2}{3}\nu(L)$.
\end{proof}

By Lemma~\ref{lemma:degree-four-birational}, we may assume that $Z\ne\mathbb{P}^2$ in order to complete the proof of Theorem~\ref{theorem:big-degree}.
Then the surface $S$ contains  $8-d\geqslant r_L$ disjoint $(-1)$-curves $E_1,\ldots,E_{8-d}$ and a smooth rational curve $C$ such that $C^2=0$,
the curve $C$ is disjoint from $E_1,\ldots,E_{8-d}$, and
$$
L\sim_{\mathbb{Q}}-K_{S}+\delta C+\sum_{i=1}^{8-d}a_iE_i
$$
for some non-negative rational numbers $\delta,a_1,\ldots,a_{8-d}$.
We may assume that $a_1\geqslant\cdots\geqslant a_{8-d}$.
Then $a_1<1$, since $L\cdot E_1>0$ and $C\cdot E_1=0$.
If $a_i>0$, then $E_i\in\Delta_L$.
Similarly, the morphism $\phi_L$ is a conic bundle if and only if $\delta>0$.
We have
$$
\nu(L)=\frac{d+2\delta+\sum_{i=1}^{8-d}a_i}{d+4\delta+2\sum_{i=1}^{8-d}a_i-\sum_{i=1}^{8-d}a_i^2}.
$$

Let $h\colon S\to\overline{S}$ be the contraction of the curves $E_1,\ldots,E_{8-d}$.
Then $\overline{S}=\mathbb{F}_1$ or  $\overline{S}=\mathbb{P}^1\times\mathbb{P}^1$.
The linear system $|C|$ is a free pencil.
It gives a conic bundle $\iota\colon S\to\mathbb{P}^1$ such that there exists a commutative diagram
$$
\xymatrix{
S\ar@{->}[rd]_\iota\ar@{->}[rr]^h&&\overline{S}\ar@{->}[ld]^\upsilon\\
&\mathbb{P}^1&}
$$
where $\upsilon$ is a natural projection. Then the curves $E_1,\ldots,E_{8-d}$ lie in the reducible fibers of $\iota$.

\begin{lemma}
\label{lemma:degree-four-conic-bundle-F-1}
Suppose that $\overline{S}=\mathbb{F}_1$.
Then $\alpha(S,L)<\frac{2}{3}\nu(L)$.
\end{lemma}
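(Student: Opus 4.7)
The strategy parallels Lemma~\ref{lemma:degree-four-birational}: I would construct an explicit family of effective $\mathbb{Q}$-divisors $D\sim_\mathbb{Q} L$ whose log canonical thresholds are bounded above by $\tfrac{2}{3}\nu(L)$, and invoke Proposition~\ref{proposition:Simons-center} to conclude the strict inequality $\alpha(S,L) < \tfrac{2}{3}\nu(L)$. First I would set up the geometry. Since $S$ is del Pezzo and $\overline{S}=\mathbb{F}_1$, no blown-up point $h(E_i)$ can lie on the $(-1)$-section $\sigma\subset\mathbb{F}_1$ (otherwise the proper transform of $\sigma$ would have self-intersection $\leq -2$, contradicting ampleness of $-K_S$). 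Consequently $\Sigma:=h^{*}\sigma$ is a $(-1)$-curve on $S$ with $\Sigma\cdot E_i = 0$ and $\Sigma\cdot C = 1$. Each $E_i$ sits inside a unique reducible fiber $E_i+F_i\sim C$ of the conic bundle $\iota$, where $F_i$ is a $(-1)$-curve satisfying $F_i\cdot\Sigma = F_i\cdot E_i = 1$.

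Next I would write down the key divisor. Using $-K_S = 2\Sigma + 3C - \sum_{i=1}^{8-d}E_i$ together with $E_i\sim C-F_i$, one obtains
$$L \sim_\mathbb{Q} 2\Sigma + \bigl(d-5+\delta+\textstyle\sum a_i\bigr)C + \sum(1-a_i)F_i.$$
Whenever the coefficient of $C$ is non-negative (automatic for $d\geq 5$), specializing $C$ to the reducible fiber $E_1+F_1$ yields an effective $D\sim_\mathbb{Q} L$ with coefficient $2$ on $\Sigma$ and $\mathrm{mult}_P D = d-2+\delta+\sum_{i\geq 2}a_i$ at the point $P:=\Sigma\cap F_1$. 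The standard estimates $\alpha(S,L)\leq\tfrac{1}{2}$ (from the coefficient on $\Sigma$) and $\alpha(S,L)\leq 2/\mathrm{mult}_P D$ (from the singularity at $P$) then reduce the desired strict inequality to two explicit polynomial inequalities in $\delta$ and the $a_i$, which are verified in Appendix~\ref{section:inequalities} and combined with the divisor data via Proposition~\ref{proposition:Simons-center}.

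The principal obstacle is the boundary regime $d=4$ with $\delta+\sum a_i<1$: there the divisor above fails to be effective, and simultaneously $\alpha(S,-K_S) = \tfrac{2}{3} = \tfrac{2}{3}\nu(-K_S)$ (the $\Delta_L=0$ equality case), so one cannot simply reduce to $L=-K_S$ to obtain strict inequality. A finer construction is required here, for instance by enriching the divisor with the proper transform of the conic through all blown-up points (viewing $S$ as $\mathbb{P}^2$ blown up at $9-d$ points) so as to pick up extra multiplicity at a suitable point of $\Sigma$. This produces a family of effective divisors indexed by discrete choices analogous to the parameter $N$ in Table~\ref{table:Qdivisors-deg-4}, which again yields the strict inequality once the corresponding technical inequality from the appendix is checked.
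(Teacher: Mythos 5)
Your geometric setup is correct ($\Sigma=h^*\sigma$ is a $(-1)$-curve disjoint from the $E_i$, each $E_i$ sits in a fiber $E_i+F_i\sim C$, and the decomposition $L\sim_{\mathbb{Q}}2\Sigma+(d-5+\delta+\sum a_i)C+\sum(1-a_i)F_i$ checks out), and for $d\in\{5,6,7\}$ the pair of bounds $\alpha\leqslant\frac12$ and $\alpha\leqslant\frac{2}{d-2+\delta+\sum_{i\geqslant2}a_i}$ does appear to suffice; this is a genuinely different route from the paper, which instead contracts further to $\mathbb{P}^2$ and uses the lines $L_{1i}$ to get bounds of the form $\frac{1}{2+\delta+a_1}$. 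However, you claim the resulting polynomial inequalities ``are verified in Appendix~\ref{section:inequalities}'': they are not. Proposition~\ref{proposition:Simons-center} proves two specific inequalities tailored to the $d=4$ decompositions with the parameter $N$, and neither covers your $d\geqslant5$ estimates, so those would have to be checked from scratch (they are true, but the verification is part of the work and is absent).

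The real gap is $d=4$, and you have also misdiagnosed where your construction breaks. It is not only the regime $\delta+\sum a_i<1$ where effectivity fails: take $\delta=1$ and $a_1=a_2=a_3=a_4=\frac13$. Then the coefficient of $C$ is $\frac43>0$, your divisor is effective, and both of your bounds give exactly $\frac12$, while $\frac23\nu(L)=\frac{11}{23}<\frac12$; so $\min\bigl(\frac12,\,2/\mathrm{mult}_P D\bigr)$ does not beat $\frac23\nu(L)$ even in the ``good'' regime. The paper's proof of the $d=4$ case is precisely the nontrivial content of this lemma: one passes to $\mathbb{P}^2$, brings in the conic $Z$ through the five blown-down points together with a $(-1)$-curve $E_1'$ with $E_1+E_1'\sim C$, produces the whole family of decompositions giving $\alpha(S,L)\leqslant\frac{2}{3+2a_1+2\delta+N}$ for the combinatorially defined $N$, and only then invokes the (genuinely technical) Proposition~\ref{proposition:Simons-center}. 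Your final paragraph gestures at exactly this but constructs no divisors and verifies nothing, so the hardest case of the lemma remains unproved.
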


\begin{proof}
Let $\overline{E}_{9-d}$ be the $(-1)$-curve on $\overline{S}$, and let $E_{9-d}$ be its proper transform on the surface $S$.
Let $f\colon\overline{S}\to\mathbb{P}^2$ be the contraction of the curve $\overline{E}_{9-d}$. Let $\pi=f\circ g$.
Denote by $L_{1i}$ the proper transform on $S$ of the line in $\mathbb{P}^2$
that passes through $\pi(E_1)$ and $\pi(E_i)$ (for every $i\leqslant 9-d$).

If $d=7$, then $L\sim_{\mathbb{Q}} (3+\delta)L_{12}+(2+\delta+a_1)E_1+2E_2$ and
$$
\alpha(S,L)\leqslant\frac{1}{3+\delta}<\frac{14+4\delta+2a_1}{21+12\delta+6a_1-3a_1^2}=\frac{2}{3}\nu(L),
$$
since otherwise we would have $0\geqslant 21+14\delta+4\delta^2+2\delta a_1+3a_1^2$.

If $d=6$, then
$L\sim_{\mathbb{Q}} 2L_{12}+(1+\delta)L_{13}+(2+\delta+a_1)E_1+(1+a_2)E_2$,
which gives
$$
\alpha(S,L)\leqslant\frac{1}{2+\delta+a_1}<\frac{12+4\delta+2a_1+2a_2}{18+12\delta+6a_1+6a_2-3a_1^2-3a_2^2}=\frac{2}{3}\nu(L),
$$
because otherwise we would have
$$
0\geqslant6-2(a_1+a_2)+3(a_1^2+a_2^2)+8\delta+12a_1+(\delta+a_1)(4\delta+2a_1+2a_2),
$$
which gives a contradiction, since $a_1\geqslant a_2$.

If $d=5$, then $L\sim_{\mathbb{Q}}L_{12}+L_{13}+(1+\delta)L_{14}+(2+\delta+a_1)E_1+a_2E_2+a_3E_3$,
which gives
$$
\alpha(S,L)\leqslant\frac{1}{2+\delta+a_1}<\frac{10+4\delta+2a_1+2a_2+2a_3}{15+12\delta+6a_1+6a_2+6a_3-3a_1^2-3a_2^2-3a_3^2}=\frac{2}{3}\nu(L).
$$
Indeed, if this inequality does not hold, then we have
$$
0\geqslant 5+6\delta-2\sum_{i=1}^3a_i+3\sum_{i=1}^3a_i^2+10a_1+4\delta(\delta+a_1)+2(\delta+a_1)\sum_{i=1}^3a_i,
$$
which is impossible since  $-2\sum_{i=1}^3a_i+10a_1\geqslant 4a_1$ as $a_1\geqslant a_2\geqslant a_3$.

Thus, we may assume that $d=4$.
Denote by $Z$ be the proper transform on the surface $S$ of the unique conic in $\mathbb{P}^2$ that passes
through the points $\pi(E_1)$, $\pi(E_2)$, $\pi(E_3)$, $\pi(E_4)$, $\pi(E_5)$.
Then
$$
L\sim_{\mathbb{Q}}\delta C+\frac{3+2a_1}{2}E_1+\frac{1}{2}\big(Z+L_{12}+L_{13}+L_{14}+L_{15}\big)+a_2E_2+a_3E_3+a_4E_4,
$$
Moreover, there exists a $(-1)$-curve $E_1^\prime$ such that $E_1^\prime+E_1\sim C$. In fact $E_1'=L_{15}$. Hence
$$
L\sim_{\mathbb{Q}}\frac{3+2a_1+2\delta}{2}E_1+\delta E_1^\prime+\frac{1}{2}\big(Z+L_{12}+L_{13}+L_{14}+L_{15}\big)+a_2E_2+a_3E_3+a_4E_4.
$$
Arguing as in the proof of Lemma~\ref{lemma:degree-four-birational}, we see that
$$
\alpha(S,L)\leqslant\frac{2}{3+2a_1+2\delta+N},
$$
where $N$ is the largest number among $a_2$, $a_2+a_3$, $a_2+a_4$, $a_3+a_4$, $a_2+a_3+a_4$, that does not exceed $1$.
Then $\alpha(S,L)<\frac{2}{3}\nu(L)$ by Proposition~\ref{proposition:Simons-center}.
\end{proof}

The final step of the proof of Theorem~\ref{theorem:big-degree} is the following Lemma.

\begin{lemma}
\label{lemma:degree-four-conic-bundle-F-0}
Suppose that $\overline{S}=\mathbb{P}^1\times\mathbb{P}^1$.
Then $\alpha(S,L)<\frac{2}{3}\nu(L)$.
\end{lemma}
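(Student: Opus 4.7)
The plan is to mirror the strategy of Lemmas~\ref{lemma:degree-four-birational} and~\ref{lemma:degree-four-conic-bundle-F-1}: produce an explicit effective $\mathbb{Q}$-divisor $D \sim_{\mathbb{Q}} L$ whose largest coefficient on an irreducible component bounds $\alpha(S, L)$ from above, and then verify that this bound is strictly less than $\frac{2}{3}\nu(L)$, invoking Proposition~\ref{proposition:Simons-center} of the Appendix when needed. The argument proceeds by case analysis on $d \in \{4, 5, 6, 7\}$. In each case the two rulings of $\overline{S} = \mathbb{P}^1 \times \mathbb{P}^1$ provide, for every exceptional curve $E_i$, two $(-1)$-curves $F_i \sim_{\mathbb{Q}} C' - E_i$ and $G_i \sim_{\mathbb{Q}} C - E_i$ (where $C'$ denotes the class of the second ruling), together with the proper transforms $B_I$ of $(1,1)$-curves through the subsets $\{h(E_i) : i \in I\}$ of the blown-up points. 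The identities $C \sim E_1 + G_1$ and $C' \sim E_1 + F_1$ yield the key relation $-K_S \sim 3E_1 + 2F_1 + 2G_1 - \sum_{i=2}^{8-d} E_i$, which we then manipulate to absorb the negative terms.

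For $d = 7, 6, 5$ the decompositions work out cleanly using only $(-1)$-curves; for instance, at $d = 7$ one obtains
\[
L \sim (3 + \delta + a_1)E_1 + 2F_1 + (2 + \delta)G_1,
\]
and for $d = 6, 5$ one finds analogous expressions with additional summands $B_{12} + a_2 E_2$ and $B_{123} + a_2 E_2 + a_3 E_3$ respectively, in each case with the coefficient on $E_1$ dominating. The resulting estimates $\alpha(S,L) \leq \frac{1}{3 + \delta + a_1}$ ($d = 7$) and $\alpha(S,L) \leq \frac{1}{2 + \delta + a_1}$ ($d = 6, 5$) are compared to $\frac{2}{3}\nu(L)$ via direct polynomial computation, and strict inequality follows by elementary manipulation using $a_i < 1$ and $a_1 \geq a_i$.

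The case $d = 4$ is the principal difficulty. The key observation is that the five $(-1)$-curves $\{E_1, E_2, E_3, F_4, G_4\}$ are pairwise disjoint, so they can be contracted simultaneously to give an auxiliary morphism $\pi \colon S \to \mathbb{P}^2$. In this $\mathbb{P}^2$-presentation the decomposition used in Lemma~\ref{lemma:degree-four-birational} becomes
\[
-K_S \sim \tfrac{3}{2} E_1 + \tfrac{1}{2}\bigl(F_1 + G_1 + B_{123} + B_{124} + B_{134}\bigr),
\]
and adding $\delta C + \sum a_i E_i$ (with $C \sim E_1 + G_1$) produces an effective divisor with coefficient $\frac{3 + 2\delta + 2a_1}{2}$ on $E_1$. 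One then refines by substituting $a_j E_j$ ($j \in \{2, 3\}$) and, where beneficial, $a_4 E_4$, using the Picard-class identities from the proof of Lemma~\ref{lemma:degree-four-birational} (e.g.\ $E_2 \sim \tfrac{1}{2}E_1 - \tfrac{1}{2}B_{123} - \tfrac{1}{2}B_{124} + \tfrac{1}{2}B_{134} + \tfrac{1}{2}F_1 + \tfrac{1}{2}G_1$), boosting the coefficient on $E_1$ to $\frac{3 + 2\delta + 2 a_1 + N}{2}$ where $N$ is the largest sub-sum of $\{a_2, a_3, a_4\}$ not exceeding $1$, exactly as in Table~\ref{table:Qdivisors-deg-4}. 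A final application of Proposition~\ref{proposition:Simons-center} yields $\alpha(S, L) < \frac{2}{3}\nu(L)$. The main technical hurdle is the case-by-case verification, across all admissible configurations of $(\delta, a_1, a_2, a_3, a_4)$, that each refined decomposition remains effective.
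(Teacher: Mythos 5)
Your proposal is correct and follows essentially the same route as the paper: explicit effective $\mathbb{Q}$-divisors $D\sim_{\mathbb{Q}}L$ supported on $E_1$, the two rulings through $h(E_1)$, and $(1,1)$-curves through triples of the blown-up points, with the $E_1$-coefficient boosted to $\frac{3+2a_1+2\delta+N}{2}$ in the $d=4$ case and the conclusion drawn from Proposition~\ref{proposition:Simons-center}. The only differences are cosmetic: your $B_{12}$, $B_{123}$ for $d=6,5$ replace the paper's $\frac{1}{2}(F_2+F_2^\prime)$ and $Z_{23}$ but yield the identical bounds $\frac{1}{2+\delta+a_1}$, your $B_{1ij}$ are exactly the paper's $Z_{ij}$ (which, despite the wording, must pass through $h(E_1)$ for the stated identities to hold), and invoking the first inequality of Proposition~\ref{proposition:Simons-center} with $a_5=0$ instead of the second is harmless since the latter is proved by reduction to the former.
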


\begin{proof}
We may assume that the fibers of the natural projection $\upsilon\colon\overline{S}\to\mathbb{P}^1$ are curves of bi-degree $(1,0)$.
For every $i\leqslant 8-d$, we denote by $\overline{F}_i$ (respectively, by $\overline{F}_i^\prime$)
the curve of bi-degree $(1,0)$ (respectively, bi-degree $(1,0)$) in $\mathbb{P}^1\times\mathbb{P}^1$
that passes through the point $h(E_i)$, and we denote by $F_i$ (respectively, by $F_i^\prime$) its proper transform on $S$.

If $d=7$, then
$L\sim_{\mathbb{Q}} (3+a_1+\delta)E_1+(2+\delta)F_1+2F_1^\prime$, which gives
$$
\alpha(S,L)\leqslant\frac{1}{3+a_1+\delta}<\frac{14+4\delta+2a_1}{21+12\delta+6a_1-3a_1^2}=\frac{2}{3}\nu(L).
$$
Similarly, if $d=6$, then
$L\sim_{\mathbb{Q}}(\frac{3}{2}+\delta)F_1+\frac{3}{2}F_1^\prime+\frac{1}{2}(F_2+F_2^\prime)+(2+\delta+a_1)E_1+a_2E_2$,
which gives
$$
\alpha(S,L)\leqslant\frac{1}{2+\delta+a_1}<\frac{12+4\delta+2a_1+2a_2}{18+12\delta+6a_1+6a_2-3a_1^2-3a_2^2}=\frac{2}{3}\nu(L).
$$

For $d\leqslant 5$, denote by $Z_{ij}$ for $2\leqslant i<j\leqslant 4$ the proper transform on $S$ of the unique irreducible curve of bi-degree $(1,1)$ on $\overline{S}$
that passes through the points $h(E_1)$, $h(E_i)$ and $h(E_j)$.
If $d=5$, then
$$
L\sim_{\mathbb{Q}}(1+\delta)F_1+F_1^\prime+Z_{23}+(2+\delta+a_1)E_1+a_2E_2+a_3E_3,
$$
so that, arguing as in the proof of Lemma~\ref{lemma:degree-four-conic-bundle-F-1}, we see that
$$
\alpha(S,L)\leqslant\frac{1}{2+\delta+a_1}<\frac{10+4\delta+2a_1+2a_2+2a_3}{15+12\delta+6a_1+6a_2+6a_3-3a_1^2-3a_2^2-3a_3^2}=\frac{2}{3}\nu(L).
$$
Thus, we may assume that $d=4$. Then $-K_S\sim_{\mathbb{Q}}\frac{3}{2}E_1+\frac{1}{2}\big(F_1+F_1^\prime+Z_{23}+Z_{24}+Z_{34}\big)$. Hence
$$
L\sim_{\mathbb{Q}}\frac{3+2a_1+2\delta}{2}E_1+\frac{1+2\delta}{2}F_1+\frac{1}{2}F_1^\prime+\frac{1}{2}Z_{23}+\frac{1}{2}Z_{24}+\frac{1}{2}Z_{34}+a_2E_2+a_3E_3+a_4E_4,
$$
which implies, in particular, that $\alpha(S,L)\leqslant\frac{2}{3+2a_1+2\delta}$.
Moreover, we have
$$
\left\{%
\aligned
&E_2\sim_{\mathbb{Q}}\frac{1}{2}E_1+\frac{1}{2}F_1+\frac{1}{2}F_1^\prime-\frac{1}{2}Z_{23}-\frac{1}{2}Z_{24}+\frac{1}{2}Z_{34},\\
&E_3\sim_{\mathbb{Q}}\frac{1}{2}E_1+\frac{1}{2}F_1+\frac{1}{2}F_1^\prime-\frac{1}{2}Z_{23}+\frac{1}{2}Z_{24}-\frac{1}{2}Z_{34},\\
&E_4\sim_{\mathbb{Q}}\frac{1}{2}E_1+\frac{1}{2}F_1+\frac{1}{2}F_1^\prime+\frac{1}{2}Z_{23}-\frac{1}{2}Z_{24}-\frac{1}{2}Z_{34}.\\
\endaligned\right.
$$
Therefore, we have
\begin{multline*}
L\sim_{\mathbb{Q}}\frac{3+2a_1+2\delta+a_2}{2}E_1+\frac{1+2\delta+a_2}{2}F_1+\frac{1+a_2}{2}F_1^\prime+\\
+\frac{1-a_2}{2}Z_{23}+\frac{1-a_2}{2}Z_{24}+\frac{1+a_2}{2}Z_{34}+a_3E_3+a_4E_4,
\end{multline*}
so that $\alpha(S,L)\leqslant\frac{2}{3+2a_1+2\delta+a_2}$.
Moreover, if $a_2+a_3\leqslant 1$, then $\alpha(S,L)\leqslant\frac{2}{3+2a_1+2\delta+a_2+a_3}$, since
\begin{multline*}
L\sim_{\mathbb{Q}}\frac{3+2a_1+2\delta+a_2+a_3}{2}E_1+\frac{1+2\delta+a_2+a_3}{2}F_1+\frac{1+a_2+a_3}{2}F_1^\prime+\\
+\frac{1-a_2-a_3}{2}Z_{23}+\frac{1-a_2+a_3}{2}Z_{24}+\frac{1+a_2-a_3}{2}Z_{34}+a_4E_4.
\end{multline*}
Similarly, if $a_2+a_4\leqslant 1$, then $\alpha(S,L)\leqslant\frac{2}{3+2a_1+2\delta+a_2+a_4}$, since
\begin{multline*}
L\sim_{\mathbb{Q}}\frac{3+2a_1+2\delta+a_2+a_4}{2}E_1+\frac{1+2\delta+a_2+a_4}{2}F_1+\frac{1+a_2+a_4}{2}F_1^\prime+\\
+\frac{1-a_2+a_4}{2}Z_{23}+\frac{1-a_2-a_4}{2}Z_{24}+\frac{1+a_2-a_4}{2}Z_{34}+a_3E_3.
\end{multline*}
And if $a_3+a_4\leqslant 1$, then $\alpha(S,L)\leqslant\frac{2}{3+2a_1+2\delta+a_3+a_4}$, since
\begin{multline*}
L\sim_{\mathbb{Q}}\frac{3+2a_1+2\delta+a_3+a_4}{2}E_1+\frac{1+2\delta+a_3+a_4}{2}F_1+\frac{1+a_3+a_4}{2}F_1^\prime+\\
+\frac{1-a_3+a_4}{2}Z_{23}+\frac{1+a_3-a_4}{2}Z_{24}+\frac{1-a_3-a_4}{2}Z_{34}+a_2E_2.
\end{multline*}
Finally, if $a_2+a_3\leqslant 1+a_4$,
then $\alpha(S,L)\leqslant\frac{2}{3+2a_1+2\delta+a_2+a_3+a_4}$, since
\begin{multline*}
L\sim_{\mathbb{Q}}\frac{3+2a_1+2\delta+a_2+a_3+a_4}{2}E_1+\frac{1+2\delta+a_2+a_3+a_4}{2}F_1+\frac{1+a_2+a_3+a_4}{2}F_1^\prime+\\
+\frac{1-a_2-a_3+a_4}{2}Z_{23}+\frac{1-a_2+a_3-a_4}{2}Z_{24}+\frac{1+a_2-a_3-a_4}{2}Z_{34}.
\end{multline*}
Thus, we proved that
$$
\alpha(S,L)\leqslant\left\{%
\aligned
&\frac{2}{3+2a_1+2\delta+a_2+a_3+a_4}\ \text{if $a_2+a_3\leqslant 1+a_4$},\\%
&\frac{2}{3+2a_1+2\delta+a_2+a_4}\ \text{if $a_2+a_4\leqslant 1$ and $a_2+a_3>1+a_4$},\\%
&\frac{2}{3+2a_1+2\delta+a_3+a_4}\ \text{if $a_3+a_4\leqslant 1$, $a_2+a_4>1$ and $a_2+a_3>1+a_4$},\\%
&\frac{2}{3+2a_1+2\delta+a_2}\ \text{if $a_3+a_4>1$, $a_2+a_4>1$ and $a_2+a_3>1+a_4$}.\\%
\endaligned\right.%
$$
By Proposition~\ref{proposition:Simons-center}, $\alpha(S,L)<\frac{8+4\delta+2a_1+2a_2+2a_3+2a_4}{12+12\delta+6a_1+6a_2+6a_3+6a_4-3a_1^2-3a_2^2-3a_3^2-3a_4^2}=\frac{2}{3}\nu(L)$.
\end{proof}

\appendix
\section{Inequalities}
\label{section:inequalities}

Let $a_1$, $a_2$, $a_3$, $a_4$, $a_5$ be non-negative rational numbers such that $1\geqslant a_1\geqslant a_2\geqslant a_3\geqslant a_4\geqslant a_5$,
let $\delta $ be a non-negative rational number,
and let $N$ be the largest number among $a_2$, $a_2+a_3$, $a_2+a_4$, $a_2+a_5$, $a_3+a_4$, $a_3+a_5$, $a_4+a_5$, $a_2+a_3+a_4$, $a_2+a_3+a_5$,
$a_2+a_4+a_5$, $a_3+a_4+a_5$, $a_2+a_3+a_4+a_5$ that does not exceed $1$.
Let
$$
\alpha=\left\{%
\aligned
&\frac{2}{3+2a_1+2\delta+a_2+a_3+a_4}\ \text{if $a_2+a_3\leqslant 1+a_4$},\\%
&\frac{2}{3+2a_1+2\delta+a_2+a_4}\ \text{if $a_2+a_4\leqslant 1$ and $a_2+a_3>1+a_4$},\\%
&\frac{2}{3+2a_1+2\delta+a_3+a_4}\ \text{if $a_3+a_4\leqslant 1$, $a_2+a_4>1$ and $a_2+a_3>1+a_4$},\\%
&\frac{2}{3+2a_1+2\delta+a_2}\ \text{if $a_3+a_4>1$ and $a_2+a_3>1+a_4$}.\\%
\endaligned\right.%
$$

\begin{proposition}
\label{proposition:Simons-center}
One has
$$
\frac{2}{3+2a_1+2\delta+N}\leqslant\frac{2}{3}\cdot\frac{4+2\delta+a_1+a_2+a_3+a_4+a_5}{4+4\delta+2(a_1+a_2+a_3+a_4+a_5)-a_1^2-a_2^2-a_3^2-a_4^2-a_5^2}
$$
and
$$
\alpha\leqslant\frac{8+4\delta+2a_1+2a_2+2a_3+2a_4}{12+12\delta+6a_1+6a_2+6a_3+6a_4-3a_1^2-3a_2^2-3a_3^2-3a_4^2}.
$$
Moreover, both inequalities are strict unless $a_1=\delta=0$.
\end{proposition}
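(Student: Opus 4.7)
The plan is to clear denominators in both inequalities and reduce each to a polynomial positivity statement. An inequality of the form $\tfrac{2}{V}\leq \tfrac{2}{3}\cdot\tfrac{T}{U}$ with $V,T,U>0$ is equivalent to $3U\leq VT$, so the first inequality reduces to $VT-3U\geq 0$ with $V=3+2a_1+2\delta+N$, $T=4+2\delta+\sum_{i=1}^5 a_i$ and $U=4+4\delta+2\sum_{i=1}^5 a_i-\sum_{i=1}^5 a_i^2$; similarly the second reduces to $VT-2U\geq 0$ where $V$ is the denominator of $\alpha$ in the appropriate case.

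For the first inequality, set $s=\sum_{i=1}^5 a_i$, $\sigma=\sum_{i=2}^5 a_i$ and $Q'=\sum_{i=2}^5 a_i^2$. A straightforward expansion yields
\[
VT-3U=2\delta\bigl(1+2\delta+2a_1+s+N\bigr)+5a_1(1+a_1)+2a_1\sigma+N(4+a_1+\sigma)-3(\sigma-Q').
\]
The first summand is non-negative and strictly positive when $\delta>0$, so the task reduces to proving $G_1\geq 0$ for the remaining expression. I would perform a case split on which subset of $\{a_2,a_3,a_4,a_5\}$ realises $N$. If $\sigma\leq 1$ then $N=\sigma$ and $G_1$ collapses to the manifestly non-negative $\sigma(1+\sigma+3a_1)+5a_1(1+a_1)+3Q'$. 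If $\sigma>1$ but $a_2+a_3+a_4\leq 1$, then $N\geq a_2+a_3+a_4\geq 3a_5$ by the ordering, so $4N-3\sigma\geq 0$ and every remaining term is non-negative. If $a_2+a_3\leq 1<a_2+a_3+a_4$, then the ordering forces $a_2\geq(a_2+a_3+a_4)/3>\tfrac{1}{3}$, hence $a_1>\tfrac{1}{3}$ and $5a_1(1+a_1)>\tfrac{20}{9}$; combined with $N\geq a_2>\tfrac{1}{3}$ and the universal bound $\sigma-Q'=\sum_{i=2}^5 a_i(1-a_i)\leq 1$ (four terms, each $\leq\tfrac14$), this gives $G_1>\tfrac{5}{9}$. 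Finally, if $a_2+a_3>1$ then $a_1\geq a_2>\tfrac{1}{2}$ and $5a_1(1+a_1)>\tfrac{15}{4}>3\geq 3(\sigma-Q')$, so $G_1>\tfrac{3}{4}$. In each non-degenerate case $G_1>0$, and tracing the equality case shows $VT-3U=0$ forces $a_1=\delta=0$.

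For the second inequality the four cases for $\alpha$ are built in. In Case (a), with $V=3+2a_1+2\delta+m$ and $m=a_2+a_3+a_4$, a direct expansion gives
\[
VT-2U=4\delta+10a_1+2m+12a_1\delta+10a_1^2+6a_1 m+8\delta^2+8\delta m+2m^2+6(a_2^2+a_3^2+a_4^2),
\]
a sum of non-negative monomials vanishing iff $a_1=\delta=0$. In Cases (b), (c), (d) the variable absent from $V$ (respectively $a_3$, $a_2$, or both $a_3$ and $a_4$) contributes a collected term of the form $2a_i\bigl(-3+2a_1+2\delta+3a_i+\Sigma\bigr)$, where $\Sigma$ is a sum of the remaining small $a_j$'s. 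Using $a_1\geq a_2$ together with the case hypothesis I would show the bracket is strictly positive: for instance in Case (b) one has $-3+2a_1+2\delta+a_2+a_4+3a_3\geq -3+3(a_2+a_3)+a_4+2\delta>4a_4+2\delta\geq 0$ thanks to $a_2+a_3>1+a_4$, and analogous estimates using $a_2+a_4>1$ and $a_3+a_4>1$ handle the other cases. The remaining terms of $VT-2U$ are manifestly non-negative, and the case hypotheses exclude $a_1=0$, so positivity is strict.

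The main obstacle is purely algebraic bookkeeping: the expansions of $VT-3U$ and $VT-2U$ are bulky, and for the first inequality one must track which subset of $\{a_2,\ldots,a_5\}$ realises $N$ in each regime. The conceptual point is that whenever $N$ (or the denominator of $\alpha$) is forced to be small, the case hypothesis simultaneously forces some $a_i$—and hence $a_1$—to exceed $\tfrac{1}{3}$ or $\tfrac{1}{2}$, so that the quadratic contribution $5a_1(1+a_1)$ or the squares $6a_i^2$ absorb the negative term $-3\sum a_i(1-a_i)$, which itself is bounded above by $3$.
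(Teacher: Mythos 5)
Your proposal is correct, and it runs on the same engine as the paper's proof: clear denominators, reduce to a polynomial inequality, and exploit that whenever $N$ is forced to be small the ordering $a_1\geqslant a_2\geqslant\cdots$ forces $a_1$ to be large, so that the contribution quadratic in $a_1$ dominates the only negative term. The packaging, however, is genuinely different in two respects. For the first inequality you argue directly that $VT-3U\geqslant 0$, splitting on the thresholds $\sigma\leqslant 1$, $a_2+a_3+a_4\leqslant 1$, $a_2+a_3\leqslant 1$ and using the uniform bound $\sum_{i\geqslant 2}a_i(1-a_i)\leqslant 1$ against explicit numeric lower bounds on $5a_1(1+a_1)$ and $4N$; the paper instead argues by contradiction, first discarding every $\delta$-term to reach the purely linear inequality $3\sum a_i>(2a_1+N)\sum a_i+8a_1+4N$ and then case-analysing on which subset sum realises $N$. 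Your route is computationally blunter but avoids the paper's intermediate inequality $3(a_2+\cdots+a_5)>7a_1+5N$, and your expansion of $VT-3U$ (which I have checked) is exactly right. For the second inequality the paper dispatches $N=a_2+a_3+a_4$ and $N=a_2+a_4$ with a one-line contradiction and reduces the remaining case to the first inequality by setting $a_5=0$, whereas you treat all four branches of $\alpha$ from scratch via the collected term $2a_i\bigl(-3+2a_1+2\delta+3a_i+\Sigma\bigr)$; this is more work, but it makes the role of each hypothesis such as $a_2+a_3>1+a_4$ explicit, and it also covers the branch $N=a_3+a_4$, which the paper's case list silently omits (that branch yields to the same contradiction since $a_1\geqslant a_2$). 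Your case (a) expansion and the bracket estimate in case (b) are correct, the case hypotheses do force $a_1>\tfrac12$ in cases (b)--(d) so strictness holds there, and the equality analysis at $a_1=\delta=0$ is right; the only thing separating the proposal from a complete proof is writing out the ``analogous estimates'' for the brackets in cases (c) and (d), which indeed follow from $a_2+a_4>1$ and $a_3+a_4>1$ exactly as you indicate.
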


\begin{proof}
Consider the first inequality. If $a_1=\delta=0$, then $N=0$ and both sides of the inequality equal $\frac{2}{3}$. Suppose that either $a_1>0$ or $\delta>0$ (or both). We have to prove that
$$
\frac{2}{3+2a_1+2\delta+N}<\frac{2}{3}\times\frac{4+2\delta+a_1+a_2+a_3+a_4+a_5}{4+4\delta+2(a_1+a_2+a_3+a_4+a_5)-a_1^2-a_2^2-a_3^2-a_4^2-a_5^2}
$$
Suppose that this inequality does not hold. Then
$$
3\sum_{i=1}^5a_i-3\sum_{i=1}^5a_i^2-2\delta\geqslant (2a_1+N+2\delta)\sum_{i=1}^5a_i+8a_1+4N+4a_1\delta+2N\delta+4\delta^2.
$$
Since either $a_1>0$ or $\delta>0$, this inequality implies that
\begin{equation}
\label{equation:Simons-center}
3\sum_{i=1}^5a_i>(2a_1+N)\sum_{i=1}^5a_i+8a_1+4N.
\end{equation}
If $N=a_2+a_3+a_4+a_5$, then \eqref{equation:Simons-center} gives
$$
3\sum_{i=1}^5a_i>(2a_1+N)\sum_{i=1}^5a_i+8a_1+4N\geqslant 8a_1+4N\geqslant 4\sum_{i=1}^5a_i,
$$
which is absurd. Thus, $N\ne a_2+a_3+a_4+a_5$, so that $a_2+a_3+a_4+a_5>1$. Then \eqref{equation:Simons-center} gives
$$
3\sum_{i=1}^5a_i>(2a_1+N)\sum_{i=1}^5a_i+8a_1+4N>(2a_1+N)(a_1+1)+8a_1+4N,
$$
which implies, in particular, that
\begin{equation}
\label{equation:Simons-center-weak}
3(a_2+a_3+a_4+a_5)>7a_1+5N,
\end{equation}
which implies that $N$ is none of the numbers $a_2+a_3+a_4$, $a_2+a_3+a_5$, $a_2+a_4+a_5$, $a_3+a_4+a_5$.
If $N$ is one of the numbers $a_2+a_3$, $a_2+a_4$, $a_2+a_5$, $a_3+a_4$, $a_3+a_5$, $a_4+a_5$, then \eqref{equation:Simons-center-weak} gives
$$
3(a_2+a_3+a_4+a_5)>7a_1+5N\geqslant 7a_1+5(a_4+a_5),
$$
so that $3a_2+3a_3>7a_1$, which is absurd. We see that $N=a_2$. Then \eqref{equation:Simons-center-weak} gives
$$
3(a_3+a_4+a_5)>7a_1+2a_2,
$$
which is impossible, because $a_1\geqslant a_2\geqslant a_3\geqslant a_4\geqslant a_5\geqslant 0$.

Consider the second inequality. If $a_1=\delta=0$, then both sides of the inequality equal $\frac{2}{3}$.
Suppose that $a_1>0$ or $\delta>0$. Take $N\geqslant 0$ such that $\alpha=\frac{2}{3+2a_1+2\delta +N}$.
We have to prove that
$$
\alpha<\frac{8+4\delta+2a_1+2a_2+2a_3+2a_4}{12+12\delta+6a_1+6a_2+6a_3+6a_4-3a_1^2-3a_2^2-3a_3^2-3a_4^2}.
$$
Suppose the latter is false. Then
$$
3\sum_{i=1}^4a_i-3\sum_{i=1}^4a_i^2-2\delta\geqslant (2a_1+N+2\delta)\sum_{i=1}^4a_i+8a_1+4N+4a_1\delta+2N\delta+4\delta^2.
$$
Since either $a_1>0$ or $\delta>0$, this inequality implies that
$$
3\sum_{i=1}^4a_i>(2a_1+N)\sum_{i=1}^4a_i+8a_1+4N.
$$
If $N=a_2+a_3+a_4$ or $N=a_2+a_4$, then we get a contradiction
$$
3\sum_{i=1}^4 a_i > (2a_1+N)\sum_{i=1}^4 a_i + 8a_1+4N\geqslant 8a_1+4N\geqslant 4\sum_{i=1}^4 a_i,
$$
because $a_1\geqslant a_3\geqslant a_4$. Thus $N=a_2$, $a_3+a_4>1, a_2+a_4>1$ and $a_2+a_3>1+a_4$.
Let $a_5=0$.
Now the result follows from the first inequality,
since $a_2$ is the largest number not exceeding $1$ among those values in the hypothesis.
\end{proof}


\begin{thebibliography}{99}

\bibitem{AP1}
C.~Arezzo, F.~Pacard, \emph{Blowing up and desingularizing constant scalar curvature K\"ahler manifolds}, Acta Math. \textbf{196} (2006), no. 2, 179--228.


\bibitem{AP2}
C.~Arezzo, F.~Pacard, \emph{Blowing up K\"ahler manifolds  with constant scalar curvature. II}, Ann. of Math. \textbf{170} (2009), no. 2, 685--738.

\bibitem{APS}
C.~Arezzo, F.~Pacard, M.~Singer, \emph{Extremal metrics on blowups}, Duke Math. J. \textbf{157} (2011), no. 1, 1--51.

\bibitem{BDL16}
R.~Berman, T.~Darvas, C.~Lu, \emph{Regularity of weak minimizers of the K-energy and applications to properness and K-stability}, preprint, arXiv:1602.03114 (2016).

\bibitem{Ch08}
I.~Cheltsov, \emph{Log canonical thresholds of del Pezzo surfaces},  Geom. Funct. Anal., \textbf{18} (2008), 1118--1144.

\bibitem{Ch13}
I.~Cheltsov, \emph{Del Pezzo surfaces and local inequalities}, Proceedings of the Trento conference ``Groups of Automorphisms in Birational and Affine Geometry'', October 2012, Springer (2014), 83--101.

\bibitem{CheltsovParkWon}
I.~Cheltsov, J.~Park, J.~Won, \emph{Affine cones over smooth cubic surfaces}, J. European Math. Soc. \textbf{18} (2016), 1537--1564.
\bibitem{Chen-Cheng}
X.~Chen, J.~Cheng, \emph{On the constant scalar curvature K\"ahler metrics, existence results}, preprint,  arXiv:1801.00656 (2018). 

\bibitem{CDS}
X.-X.~Chen, S.~Donaldson, S.~Sun, \emph{K\"ahler-Einstein metrics on Fano manifolds. I, II, III}, J. Amer. Math. Soc. \textbf{28} (2015), no. 1, 183--197, 199--234, 235--278.


\bibitem{DR17}
T.~Darvas, Y.~Rubinstein, \emph{Tian's properness conjectures and Finsler geometry of the space of Kähler metrics}, J. Amer. Math. Soc. \textbf{30} (2017), no. 2, 347–387. 

\bibitem{Dervan1}
R.~Dervan, \emph{Alpha invariants and $K$-stability for general polarisations of Fano varieties}, Int. Math. Res. Notices \textbf{16} (2015), 7162--7189.

\bibitem{SD-toric-surfaces}
S.~Donaldson, \emph{Constant scalar curvature metrics on toric surfaces}, Geom. Funct. Anal. \textbf{19} (2009), no. 1, 83--136.

\bibitem{HongWon}
K.~Hong, J.~Won, \emph{Alpha invariant for general polarizations of del Pezzo surfaces of degree $1$}, preprint,  arXiv:1606.01418 (2016).

\bibitem{MarGar}
J.~Martinez-Garcia, \emph{Log canonical thresholds of del Pezzo surfaces in characteristic $p$}, Manuscripta Math. \textbf{145} (2014), no. 1-2, 89--110. 


\bibitem{RS}
Y.~Rollin, M.~Singer, \emph{Construction of Kähler surfaces with constant scalar curvature}, J. Eur. Math. Soc. (JEMS) \textbf{11} (2009), no. 5, 979--997. 

\bibitem{RT}
J.~Ross, R.~Thomas, \emph{An obstruction to the existence of constant scalar curvature K\"ahler metrics}, J. Differential Geom. \textbf{72} (2006), 429--466.

\bibitem{Shokurov}
V.~Shokurov, \emph{Three--dimensional log perestroikas}, Russian Acad. Sci. Izv. Math. \textbf{40} (1993), 95--202.

\bibitem{Tian1990}
G.~Tian, \emph{On Calabi's conjecture for complex surfaces with positive first Chern class}, Invent. Math. \textbf{101} (1990), no. 1, 101--172.
\end{thebibliography}
\end{document}